\newcommand{\inR}{\in \mathbb{R}}
\newcommand{\R}{ \mathbb{R}}
\newcommand{\Z}{ \mathbb{Z}}
\newcommand{\N}{ \mathbb{N}}
\newcommand{\eqdef}{\stackrel{\vartriangle}{=}}
\newcommand{\Lop}{{\rm L}}
\newcommand{\Dop}{{\rm D}}
\newcommand{\dint}{{\rm d}}
\newcommand{\Fourier}{ \mathcal{F}}
\newcommand{\bw}{{\boldsymbol \omega}}
\DeclareMathOperator*{\esssup}{ess\,sup}
\def\V#1{{\boldsymbol{#1}}}         
\def\Spc#1{{\mathcal{#1}}}  
\def\M#1{{\bf{#1}}}  
\def\Op#1{{\mathrm{#1}}}  
\def\ee{\mathrm{e}}
\def\Proj{\mathrm{Proj}} 
\def\Identity{\mathrm{Id}} %
\newcommand{\embedC}{\xhookrightarrow{}}
\newcommand{\embedD}{\xhookrightarrow{\mbox{\tiny \rm d.}}}
\newcommand{\embedIso}{\xhookrightarrow{\mbox{\tiny \rm iso.}}}
\newcommand{\toC}{\xrightarrow{\mbox{\tiny \ \rm c.  }}}
\newcommand{\toIso}{\xrightarrow{\mbox{\tiny \ \rm iso.  }}}
\def\ie{{\em i.e.}, }
\def\eg{{\em e.g.}, }
\renewcommand{\[}{\begin{equation}}
\renewcommand{\]}[1]{\label{eq:#1}\end{equation}}
\newtheorem{definition}{Definition}
\newtheorem{proposition}{Proposition}
\newtheorem{corollary}{Corollary}
\newtheorem{theorem}{Theorem}
\newtheorem{example}{Example}
\title{Native Banach spaces for splines\\ and 
variational inverse problems\thanks{The research leading to these results has received funding from the Swiss National Science Foundation under Grant 200020-162343.}}
\author{
Michael Unser and Julien Fageot\thanks{Biomedical Imaging Group, \'Ecole polytechnique f\'ed\'erale de Lausanne (EPFL),
Station 17, CH-1015, Lausanne, Switzerland ({\tt michael.unser@epfl.ch}). }
 }
\begin{document}



\maketitle
\begin{abstract}
We propose a systematic construction of native Banach spaces for general spline-admissible
operators $\Lop$. In short, the native space for $\Lop$ and the (dual) norm $\|\cdot\|_{\Spc X'}$ is the largest space of functions $f: \R^d \to \R$ such that $\|\Lop f\|_{\Spc X'}<\infty$, subject to the constraint that the growth-restricted null space of $\Lop$ 
be finite-dimensional. This space, denoted by $\Spc X'_\Lop$, is specified as the dual
of the pre-native space $\Spc X_\Lop$, which is itself obtained through a suitable completion process. The main difference with prior constructions (\eg reproducing kernel Hilbert spaces) is that our 
approach involves test functions rather than sums of atoms (\eg kernels), which makes it applicable to a much broader class of norms, including total variation. Under specific admissibility and compatibility hypotheses,
we  lay out the direct-sum topology of $\Spc X_\Lop$ and $\Spc X'_\Lop$, and
identify the whole family of equivalent norms. Our construction ensures that the native space
and its pre-dual are endowed with a fundamental Schwartz-Banach property. In practical terms, this means that $\Spc X'_\Lop$ 
is rich enough to reproduce any function 
with an arbitrary degree of precision.

\end{abstract}

\tableofcontents
%

\section{Introduction}
Given a series of data points $(\V x_m,y_m) \in \R^d \times \R$, the basic interpolation problem is to find a function
$f: \R^d \to \R$ such that $f(\V x_m)=y_m$ for $m=1,\dots,M$. In order to make the problem well posed, one needs
to impose additional constraints on $f$; for instance that $f$ is in the linear span of a finite number of known basis functions
(standard regression problem), or that the desired function minimizes some energy functional.
The variational formulation of splines follows the latter strategy and ensures the existence and unicity of the solution
for energy functionals of the form $\|\Lop f\|^2_{L_2}$ with $\Lop$ a suitable differential operator, the prototypical example being $\Lop=\Dop^n$ with $\Dop$ the derivative operator \cite{Schoenberg1964,Prenter:1975,deBoor1966}.

We adopt in this paper an abstract formulation that 
encompasses the great majority of variational theories of splines that have been considered in the literature such as
\cite{DeBoor1976, Bezhaev2001} to give some notable examples.
%
%
Given a suitable\footnote{Our notation reflects the property that the two spaces are the topological duals of two primary spaces $(\Spc X, \Spc X_\Lop)$. $\Spc X$ is a classical function space such as $L_p(\R^d)$, while $\Spc X'_\Lop$ is the corresponding native space for $\Lop$; that is, the largest Banach space such that $\|\Lop f\|_{\Spc X'}$ is well-defined. } pair of Banach (or Hilbert) spaces $(\Spc X',\Spc X'_\Lop)$ whose elements are functions on $\R^d$, a regularization operator 
$\Lop: \Spc X_\Lop' \toC \Spc X'$, and a vector-valued measurement functional $\V \nu 
: \Spc X_\Lop' \to \R^M,
f\mapsto \V \nu(f)=(\langle \nu_1,f\rangle,\dots,\langle \nu_M,f\rangle)$, we define the generalized spline interpolant $f_{\rm int}: \R^d \to \R$ as the solution of the variational linear inverse problem
\begin{align}
\label{Eq:VariationalSpline}
f_{\rm int}=\arg \min_{f \in \Spc X'_\Lop} \|\Lop f\|_{\Spc X'} \quad \mbox{ s.t. } \quad \V \nu(f)=(y_1,\dots,y_M).
\end{align}
The simplest case occurs when there is an isomorphism between $\Spc X'_\Lop$ and $\Spc X'$, meaning that the regularization operator $\Lop$
has a stable inverse $\Lop^{-1}: \Spc X' \toC \Spc X_\Lop'$.
In particular, when $\nu_m=\delta(\cdot-\V x_m): f \mapsto f(\V x_m)$ and $\Spc H=\Spc X'_\Lop$ is a reproducing-kernel Hilbert space (RKHS) such that 
$\langle f, g \rangle_{\Spc H}=\langle \Lop f, \Lop g \rangle=\langle \Lop^\ast\Lop f,  g \rangle$
for all $f,g \in \Spc H$, then the solution of
\eqref{Eq:VariationalSpline} with $\Spc X'=L_2(\R^d)$ is expressible as\begin{align}
\label{Eq:kernelExp}
f_{\rm int}=\sum_{m=1}^M a_m h(\cdot,\V x_m),
\end{align}
where 
\begin{align}
\label{Eq:kernelFormula}
h(\cdot,\V x_m)=(\Lop^\ast\Lop)^{-1}\{\delta(\cdot-\V x_m)\}
\end{align} 
is the (unique) reproducing kernel of $\Spc H$  \cite{deBoor1966,Berlinet2004}. The bottom line is that \eqref{Eq:kernelExp} is a linear model parametrized by $\V a
\inR^M$.
In addition, we have that $\|f_{\rm int}\|^2_{\Spc H}=\V a^T \M G \V a$, where $\M G\inR^{M \times M}$ is the positive-definite Gram matrix whose entries are given by
$[\M G]_{m,n}=h(\V x_m,\V x_n)$; this then yields the solution $\M a=\M G^{-1} \M y$ of our initial interpolation problem with $\M y=(y_1, \dots,y_M)$.

The theory of RHKS also works the other way around in the sense that,  instead of the operator $\Lop$, one can specify a positive-definite kernel
$h: \R^d \times \R^d \to \R$ \cite{Aronszajn1950}. One then constructs the native space $\Spc X'_\Lop=\Spc H$
by considering the closure of the vector space specified by \eqref{Eq:kernelExp} with varying $M\in \N$ and $\V x_m\inR^d$; \ie
 $\Spc H=\overline{{\rm span}}\{h(\cdot,\V x)\}_{\V x\inR^d}$ \cite{Berlinet2004,Ferreira2013}.

This kind of result is extendable to the scenario where $\Lop$ has a finite-dimensional null space $\Spc N_\Lop={\rm span}\{p_n\}_{n=1}^{N_0}$, in which case the solution takes the generic parametric form 
\begin{align*}
f_{\rm int}=\sum_{m=1}^M a_m h(\cdot,\V x_m) + \sum_{n=1}^{N_0} {b_n} p_n
\end{align*}
with expansion coefficients $\V a=(a_1,\dots,a_M)\inR^M$ and $\V b=(b_1,\dots,b_{N_0})$ \cite{deBoor1966,Wahba1990}. The delicate point there is to properly define
the corresponding native space $\Spc H$ since the underlying kernel $h: \R^d \times \R^d \to \R$, which is still given by \eqref{Eq:kernelFormula}, is no longer positive-definite \cite{Micchelli1986, Schaback1999,Schaback2000}. The underlying native space then has the structure of a semi-RKHS  \cite{Mosamam2010, Atteia1992}, a concept that was already present implicitly in the early works on variational splines \cite{deBoor1966, Duchon1977}.

While the aforementioned results with $\Spc X'=L_2(\R^d)$ are classical, there has been recent interest in a variant of Problem \eqref{Eq:VariationalSpline}
with $\Spc X'=\Spc M(\R^d)$ (the space of Radon measures on $\R^d$) \cite{Unser2017,flinth2017,Bredies2018}. It turns out that the latter is much better suited to compressed sensing and to the resolution of inverse problems in general \cite{Gupta2018}. In particular, when $\Lop$ is shift-invariant, it has been shown \cite[Theorem  2]{Unser2017} that
the minimum of $\|\Lop f\|_{\Spc M}$ is achieved by an adaptive $\Lop$-spline\footnote{A function $f: \R^d\to \R$ is said to be an $\Lop$-spline with knots $\V \tau_1,\dots,\V \tau_K \inR^d$ if
$\Lop\{f\}=\sum_{k=1}^K a_k \delta(\cdot-\V \tau_k)$
with $a_1,\dots,a_K\inR$. For instance, if $d=1$ and $\Lop=\Dop$ (resp. $\Lop=\Dop^2$), then $f$ is piecewise-constant (resp. piecewise-linear and continuous)
with jumps (resp. breaks) at the $\tau_m$.}
of the form
\begin{align}
\label{Eq:Sparse}
f=\sum_{k=1}^K a_k \rho_\Lop(\cdot-\V \tau_k) + \sum_{n=1}^{N_0} {b_n} p_n,
\end{align}
where $\rho_\Lop=\Lop^{-1}\{\delta\}$, 
with the twist that the intrinsic parameters of the spline---the number $K$ of knots and their locations $\V \tau_k$---are adjustable with $K\le (M-N_0)$. Remarkably, the generic form \eqref{Eq:Sparse} of the solution is preserved for arbitrary continuous linear measurement functionals $\V \nu: \Spc X_\Lop' \to \R^M$, far beyond the pure sampling framework of RKHS, which makes the result applicable to linear inverse problems. While there have been attempts to generalize the $\Spc M$-norm (or total variation) variant of the reconstruction problem \eqref{Eq:VariationalSpline} \cite{flinth2017,Bredies2018,Boyer2018}, the part that has remained elusive
is the specification of the corresponding native space in the multidimensional scenario ($d>1$) when the null space of $\Lop$ is nontrivial. The difficulty there is essentially the same as the one that was encountered initially by Duchon with $\Lop=(-\Delta)^{\gamma}$ (fractional Laplacian) and $\Spc X=L_2(\R^d)$ \cite{Duchon1977}; namely, the need to properly restrict the native space in order to exclude the harmonic components of the null space that grow faster than the underlying Green's function. A systematic approach for specifying native spaces was later given by Schaback \cite{Schaback1999,Schaback2000}, but it is restricted to the RKHS framework and to kernels that are conditionally positive-definite.

In this paper, we develop an alternative Banach-space formulation that is applicable to a whole range of norms $\|\cdot\|_{\Spc X'}$
and operators $\Lop$. In a nutshell, we are proposing a systematic approach for constructing the largest Banach space $\Spc X'_\Lop$ that ensures that
$\|\Lop f\|_{\Spc X'}$ is well defined, subject to the constraint that the null space of $\Lop$ should be finite-dimensional.
The main benefits of our new formulation
are as follows:

\begin{itemize}
\item The extension of the notion of generalized spline via \eqref{Eq:VariationalSpline} 
through the appropriate pairing of a regularization operator $\Lop$ and a space $\Spc X'$ that is the continuous dual of a primary Banach space $\Spc X$. The two aforementioned theories with $R(f)=\|\Lop f\|^2_{L_2}$ (RKHS or Tikhonov regularization) and  $R(f)=\|\Lop f\|_{\Spc M}$ (generalized TV regulatization) are covered by taking
$\Spc X=L_2(\R^d)$ and $\Spc X=C_0(\R^d)$ 
\big(the pre-dual of $\Spc M(\R^d)$\big), but our framework is considerably more general.

\item The precise identification of the class of spline-admissible operators $\Lop$ (see Definition \ref{Def:splineadmis3}). 
In essence, these are differential-like operators that are injective on $\Spc S(\R^d)$ (Schwartz' class of test functions) and that admit a well-defined Green's function, which is symbolically denoted by $(\V x, \V y) \mapsto g_\Lop(\V x,\V y)=\Lop^{-1}\{\delta(\cdot-\V y)\}(\V x)$, where $\delta(\cdot-\V y)$ is the Dirac impulse at the location $\V y\inR^d$.

\item The proposal of a Banach counterpart to the notion of conditional positive definiteness from the theory of semi-RKHS: The critical hypothesis here is the continuity of some 
pseudo-inverse operator $\Lop^{-1\ast}_\V \phi$ (see Definition \ref{Def:Hypotheses}). In a companion paper, we shall demonstrate the usefulness of this criterion and how it can be readily tested in practice \cite{Fageot2019a}.

\item
A systematic way of constructing the native space for $(\Lop,\Spc X')$, denoted by $\Spc X'_\Op L$, via a completion process that involves test functions and operators rather than
linear combinations of kernels.  This is a significant extension of the usual approach as it applies to a much larger family of primary spaces, including non-reflexive Banach spaces such as $\Spc M(\R^d)$.

\item The guarantee of universality: The native space
$\Spc X'_\Op L$ specified by Theorem \ref{Theo:NativeSpace2} is rich enough to represent any function with an arbitrary degree of precision\footnote{This is due to $\Spc X'_\Op L$ being dense in $\Spc S'(\R^d)$. 
}. It is also sufficiently restrictive for the null space of $\Lop$ to be finite-dimensional, which is non-obvious for $d>1$ since the ``unrestricted'' null space 
of partial differential operators is either trivial or infinite-dimensional \cite{Hormander1990}. The proposed approach is a convenient alternative to the more traditional use of Beppo-Levi spaces that
involve composite norms with partial derivatives \cite{Beatson2005,Deny1954, Berlinet2004,Wendland2005}.

\item The explicit characterization of $\Spc X_\Lop$ (the pre-dual of the native space $\Spc X'_\Lop$), as given 
in Corollary \ref{Corol:pre-dual}.
The practical significance of this result 
is that it precisely delineates the domain of validity of representer theorems for the solutions of Problem \eqref{Eq:VariationalSpline} or variants thereof.
Indeed, a sufficient 
condition for existence is the weak* continuity of the measurement functional $\V \nu: \Spc X'_\Lop \to \R^M$, as in \cite{Unser2017}. This is equivalent to $\nu_m \in \Spc X_\Lop$ for $m=1,\dots,M$ \cite{Reed1980}. For instance, the requirement for the well-possedness
of a regularized interpolation problem is $\delta(\cdot-\V x_m)\in \Spc X_\Lop$ for any $\V x_m \in \R^d$. The latter condition is automatically satisfied when $\Spc X'_\Lop$ is a RKHS. However, it can fail when switching to
non-euclidean norms such as, for example, the critical configuration $(\Lop,\Spc X')=(\Dop,\Spc M(\R))$ which corresponds to the popular total-variation regularization with
$R(f)={\rm TV}(f)=\|\Dop f\|_{\Spc M}$ \cite{Rudin1992,Chambolle2004a}. On the other hand, we have that
$\delta(\cdot-\V x_m)\in \Spc X_\Lop$ for $(\Lop,\Spc X')=(\Dop^2,\Spc M(\R))$, which justifies the use of second-order total variation for the regularization of deep neural networks \cite{Unser2018}.

\end{itemize}

The paper is organized as follows: In Section \ref{Sec:Prelinaries}, we lay out the functional context while introducing the notion of Schwartz-Banach space, which is fundamental to our approach. In Section 3, we give the mathematical conditions for $\Lop$ to be spline-admissible and describe an effective way to stabilize its inverse via the use of a biorthogonal system of $\Spc N_\Lop$ (the null space of $\Lop$). The final ingredient is given by some norm-compatibility conditions 
that enable the specification of the pre-Banach space $\Spc P_\Lop$.
The completion of $\Spc P_\Lop$
in the $\|\cdot\|_{\Spc X_\Lop}$-norm yields our pre-native space $\Spc X_\Lop$
whose properties are revealed in the first part of Section 4 (Theorem \ref{Theo:pre-dual2}).
This then allows us to characterize the native space $\Spc X'_\Lop$ (Theorem \ref{Theo:NativeSpace2}) and to establish its embedding properties. We also identify the complete family of equivalent norms, which yields a complete understanding of the underlying direct-sum topology.
Finally, in Section 5, we 
illustrate the compatibility of our extended formulation with the specification of many classical spaces; in particular, RKHS and $L_p$-type Solobev spaces.
\section{Preliminaries}
\label{Sec:Prelinaries}

\subsection{Schwartz-Banach spaces}

The Banach spaces that we shall consider are embedded in Schwartz' space of tempered distributions denoted by $\Spc S'(\R^d)$. Formally, an element $f\in\Spc S'(\R^d)$
is a continuous linear functional $f: \varphi \mapsto \langle f, \varphi\rangle$ that associates a real number denoted by 
$\langle f, \varphi\rangle$ to each test function $\varphi \in \Spc S(\R^d)$ (Schwartz' space of smooth and rapidly decaying functions). 
For instance, the Dirac impulse at location $\V x_0\in\R^d$
is the generalized function $\delta(\cdot-\V x_0)\in \Spc S'(\R^d)$  defined as
$$
\varphi \mapsto \langle \delta(\cdot-\V x_0), \varphi\rangle=\varphi(\V x_0).
$$
Similarly, any slowly increasing and locally integrable function $f: \R^d \to \R$ specifies a distribution by way of the integral
$\langle f, \varphi\rangle=\int_{\R^d} f(\V x)\varphi(\V x)\dint \V x$.

Our construction relies on the pairing of an operator $\Lop$ and a Banach space $\Spc X'$.
The latter is the dual of a primary space $\Spc X$ that is appropriately sandwiched between $\Spc S(\R^d)$ and $\Spc S'(\R^d)$, in accordance with Definition \ref{Def:SchwartzBanach}.

\begin{definition}[Schwartz-Banach space]
\label{Def:SchwartzBanach}
A normed vector space $(\Spc X,\|\cdot\|_{\Spc X})\subseteq \Spc S'(\R^d)$ is said to be a Schwartz-Banach space
if it can be specified as the completion of $\Spc S(\R^d)$ in the $\|\cdot\|_{\Spc X}$-norm or, equivalently, if 
$\Spc X$ is a Banach space with the property that $\Spc S(\R^d) \embedD \Spc X \embedD \Spc S'(\R^d)$ (continuous and dense embeddings).
\end{definition} 

The reader is  referred to Appendix A for the precise definition of the underlying notions of embeddings and a review of supporting mathematical results.

Prominent examples of Schwartz-Banach spaces are $L_p(\R^d)$ with $p\in[1,\infty)$ and $C_0(\R^d)$ (the class of functions that are continuous, uniformly bounded and decaying at infinity).
By contrast, the Schwartz-Banach property holds neither for $L_\infty(\R^d)$ nor for $\Spc M(\R^d)$ (the space of bounded Radon measures on $\R^d)$. These two spaces, however, retain relevance for our purpose as duals of
(non-reflexive) Schwartz-Banach spaces; i.e., $L_\infty(\R^d)=\big(L_1(\R^d)\big)'$ and $\Spc M(\R^d)=\big(C_0(\R^d)\big)'$.
\begin{proposition}
\label{Prop:SchwartzBanach}
The dual $\Spc X'$ of a Schwartz-Banach space $\Spc X$ is a Banach space with the following properties:
\begin{itemize}
\item $\Spc S(\R^d) \embedC \Spc X' \embedD \Spc S'(\R^d)$
\item $\Spc X'=\{w \in \Spc S'(\R^d):  \sup_{\varphi \in \Spc S(\R^d)\backslash \{0\}} \frac{\langle w,\varphi\rangle}{\|\varphi\|_{\Spc X}}=\|w\|_{\Spc X'}<+\infty\}.$
\item The duality product for $(\Spc X', \Spc X)$ is compatible with that of $\big(\Spc S'(\R^d), \Spc S(\R^d)\big)$, which allows us to write
\begin{align}
\label{Eq:CompatibleDproduct}
\langle f, g\rangle_{\Spc X' \times \Spc X}=\langle f, g\rangle
\end{align}
for any $(f,g) \in \Spc X'\times \Spc X$. 
\end{itemize}
If, in addition, $\Spc X$ is reflexive, then $\Spc X'$ is itself a Schwartz-Banach space, which then also yields
$$
\Spc X=\Spc X''=\{g \in \Spc S'(\R^d): \|g\|_{\Spc X}=\sup_{\varphi \in \Spc S(\R^d)\backslash \{0\}} \frac{\langle g,\varphi\rangle}{\|\varphi\|_{\Spc X'}}<+\infty\}.
$$

\end{proposition}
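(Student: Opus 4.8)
The plan is to derive all four assertions from one structural fact, namely that $\Spc S(\R^d)$ is dense in $\Spc X$, combined with the two given embeddings, via duality and Hahn--Banach arguments. First I would realize $\Spc X'$ concretely as a subspace of $\Spc S'(\R^d)$: the continuous dense embedding $\Spc S(\R^d)\embedD\Spc X$ dualizes to a continuous map $\Spc X'\to\Spc S'(\R^d)$, $w\mapsto w|_{\Spc S(\R^d)}$, which is injective precisely because a functional vanishing on the dense subspace $\Spc S(\R^d)$ must be zero. This identification is exactly what makes the duality products compatible: for $w\in\Spc X'$ and $\varphi\in\Spc S(\R^d)$ the image of $w$ in $\Spc S'(\R^d)$ acts on $\varphi$ as $w$ does, so $\langle w,\varphi\rangle_{\Spc X'\times\Spc X}=\langle w,\varphi\rangle$, and the general identity \eqref{Eq:CompatibleDproduct} follows by continuity and density of $\Spc S(\R^d)$ in $\Spc X$. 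The estimate $|\langle w,\varphi\rangle|\le\|w\|_{\Spc X'}\|\varphi\|_{\Spc X}$ shows that $\|\cdot\|_{\Spc X'}$-convergence forces convergence against each fixed test function, giving the continuous embedding $\Spc X'\embedC\Spc S'(\R^d)$.

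For the norm formula I would use density once more. The dual norm $\|w\|_{\Spc X'}=\sup_{g\in\Spc X\setminus\{0\}}\langle w,g\rangle/\|g\|_{\Spc X}$ may be computed over the dense subset $\Spc S(\R^d)$, which produces the stated supremum. Conversely, any $w\in\Spc S'(\R^d)$ for which this supremum is finite is a bounded linear functional on the dense subspace $\Spc S(\R^d)\subset\Spc X$; by the bounded-linear-extension theorem it extends uniquely to an element of $\Spc X'$ of the same norm whose restriction to $\Spc S(\R^d)$ is again $w$, which establishes the asserted set equality.

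The remaining embedding $\Spc S(\R^d)\embedC\Spc X'$ is obtained by dualizing the other hypothesis $\Spc X\embedC\Spc S'(\R^d)$: its transpose is a continuous map $\bigl(\Spc S'(\R^d)\bigr)'\to\Spc X'$, and since $\Spc S(\R^d)$ is reflexive as a locally convex (nuclear Fréchet) space, $\bigl(\Spc S'(\R^d)\bigr)'=\Spc S(\R^d)$ carries its own topology; unwinding the definitions shows this transpose sends $\psi$ to the functional $g\mapsto\langle g,\psi\rangle$, i.e.\ it is precisely the claimed inclusion, and it is injective because $\Spc X$ is dense in $\Spc S'(\R^d)$. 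Density of $\Spc X'$ in $\Spc S'(\R^d)$ is then free, since $\Spc S(\R^d)\subseteq\Spc X'$ composes to the standard dense inclusion $\Spc S(\R^d)\hookrightarrow\Spc S'(\R^d)$. I expect this dualization step to be the main obstacle, as it is the one place where the argument genuinely uses the special nature of $\Spc S(\R^d)$ rather than generic Banach-space facts, and keeping track of the correct (strong) dual topologies is delicate.

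Finally, in the reflexive case I would show that $\Spc S(\R^d)$ is $\|\cdot\|_{\Spc X'}$-dense in $\Spc X'$ by a separation argument: if its closure were proper, Hahn--Banach would supply a nonzero $\Phi\in\Spc X''$ annihilating $\Spc S(\R^d)$; reflexivity identifies $\Phi$ with an element $x\in\Spc X\subseteq\Spc S'(\R^d)$, and the compatibility of pairings turns the annihilation condition into $\langle x,\psi\rangle=0$ for every $\psi\in\Spc S(\R^d)$, forcing $x=0$ and hence $\Phi=0$, a contradiction. Together with the embeddings already obtained, this makes $\Spc X'$ a Schwartz--Banach space; applying the earlier parts with $\Spc X'$ in place of $\Spc X$, and invoking the isometry $\Spc X=\Spc X''$, then yields the symmetric characterization of $\Spc X$ as its own bidual.
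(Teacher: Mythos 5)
Your proof is correct and follows essentially the same route as the paper's: identify $\Spc X'$ inside $\Spc S'(\R^d)$ by restricting functionals to the dense subspace $\Spc S(\R^d)$, compute the dual norm over that dense subspace, recover the converse inclusion $\Spc W\subseteq\Spc X'$ via the bounded-linear-extension theorem, and obtain the reflexive case by bootstrapping. The only difference is presentational: the paper delegates the dualization of the two embeddings and the denseness of $\Spc S(\R^d)$ in $\Spc X'$ (reflexive case) to its Theorem~\ref{Theo:Dense}, whose Appendix~A proof uses the same transpose-of-the-inclusion and Hahn--Banach separation arguments that you write out inline.
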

The prototypical example that meets the last statement is $\Spc X'=L_q(\R^d)=\big(L_p(\R^d)\big)'$ with $\frac{1}{p} + \frac{1}{q}=1$ and $1<p<\infty$. The property that is lost when 
$\Spc X$ is not reflexive \big(\eg for $(p,q)=(1,\infty)$\big) is the denseness of the continuous embedding $\Spc S(\R^d)\embedC\Spc X'$.

\begin{proof} The embedding $\Spc S(\R^d) \embedC \Spc X' \embedD \Spc S'(\R^d)$ follows from
the Schwartz-Banach property, Theorem \ref{Theo:Dense} on dual embeddings, and the reflexivity of $\Spc S(\R^d)$. When $\Spc X$ is reflexive, we also get
that $\Spc S(\R^d) \embedD \Spc X'$, which proves that $\Spc X'$ is a Schwartz-Banach space.
By definition, 
$\Spc X'$ is the Banach space associated with the dual norm
\begin{align*}
\|w\|_{\Spc X'}\eqdef 
 \sup_{\|\varphi \|_\Spc X\le 1:\ \varphi \in \Spc X} \langle w,\varphi\rangle_{\Spc X' \times \Spc X}<+\infty.
\end{align*}
Now, the fundamental observation is that $\langle w,\varphi\rangle_{\Spc X' \times \Spc X}=\langle w,\varphi\rangle$
for any $(w,\varphi)\in \big(\Spc X', \Spc S(\R^d)\big)$ since $\Spc X' \subseteq \Spc S'(\R^d)$,
while the determination of the supremum 
can be restricted to $\varphi \in \Spc S(\R^d)$ in reason of the denseness of  $\Spc S(\R^d)$ in $\Spc X$. This allows us to rewrite the dual norm as\begin{align}
\label{Eq:dualNormSchwartz}
\|w\|_{\Spc X'}
=\sup_{\|\varphi\|_{\Spc X}\l\le 1: \ \varphi \in \Spc S(\R^d)} \langle w,\varphi\rangle=\sup_{\varphi \in \Spc S(\R^d)\backslash \{0\}} \frac{\langle w,\varphi\rangle}{\|\varphi\|_{\Spc X}}.
\end{align}
%
Since $\Spc X'\embedC \Spc S'(\R^d)$ and the expression on the right-hand side of \eqref{Eq:dualNormSchwartz} is well-defined for any $w \in \Spc S'(\R^d)$, we have that
$\Spc X' \subseteq \Spc W=\{w \in \Spc S'(\R^d): \|w\|_{\Spc X'}<\infty\}$.
To establish the converse inclusion---and, hence, $\Spc X'=\Spc W$---we use
\eqref{Eq:dualNormSchwartz} 
to infer that, for any $w \in \Spc W\subseteq\Spc S'(\R^d)$, the linear functional $w: \Spc S(\R^d) \to \R$ is 
bounded by
$$
 |\langle w, \varphi\rangle|\le \|w\|_{\Spc X'}\|\varphi\|_{\Spc X}
$$
for all $\varphi \in \Spc S(\R^d)$. We then invoke Theorem \ref{Theo:BLT} below with $\Spc Y=\R$ to deduce that
$w\in \Spc W$ has a unique continuous extension to the completed space
$\Spc X=\overline{(\Spc S(\R^d),\|\cdot\|_{\Spc X})}$, which proves that
$\Spc W \subseteq \Spc X'$. The same argument also gives a precise meaning
to the right-hand side of \eqref{Eq:CompatibleDproduct}.
%
Finally, in the reflexive case, we reapply the first part of Proposition \ref{Prop:SchwartzBanach} to $\Spc X'$ to obtain the announced characterization of $\Spc X''=\Spc X$.
\end{proof}

The important point here is that the two (dual) formulas for the norms in Proposition \ref{Prop:SchwartzBanach} are valid for any tempered distribution $w \in \Spc S'(\R^d)$. In effect, this provides us with a simple criterion for space membership (resp. exclusion):
$ \|w\|_{\Spc X'}<\infty \Leftrightarrow w \in \Spc X'$ (resp. $w \notin \Spc X' \Leftrightarrow \|w\|_{\Spc X'}=\infty$).
Likewise, we have that
$g \in \Spc X=\Spc X'' \Leftrightarrow \|g\|_{\Spc X}<\infty$, although the equivalence holds true in the reflexive case only. This explains the 
greater difficulty in developing a general theory for non-reflexive native spaces\footnote{For instance, the non-reflexive spaces $C_0(\R^d)\embedIso C_{\rm b}(\R^d)\embedIso L_\infty(\R^d)$ are all equipped with the same $\|\cdot\|_\infty$-norm.
The condition that $g$ is bounded 
is not sufficient to ensure that $g \in C_0(\R^d)$ (the only Schwartz-Banach space in the chain); it is also required that $g$ be continuous 
and decaying at infinity. The boundedness criterion for space membership applies to $L_\infty(\R^d)$ alone because it is the dual of the Schwartz-Banach space $L_1(\R^d)$.}---as opposed to, say, the 
traditional RKHS that go hand-in-hand with $\Spc X=L_2(\R^d)$.

A significant advantage of working with Schwartz-Banach spaces is the 
compatibility property expressed by \eqref{Eq:CompatibleDproduct}.
In addition, when $f \in \Spc X'$ and $g \in \Spc X$ are both ordinary functions, the duality product has an explicit transcription as the integral
\begin{align}
\label{Eq:dualproduct}
\langle f, g\rangle=\int_{\R^d} f(\V x)g(\V x) \dint \V x.
\end{align}
In order to control the algebraic rate of growth/decay of such functions, we 
rely 
on weighted Banach spaces such as
$$
L_{\infty,\alpha}(\R^d)=\left\{f: \R^d \to \R \ \mbox{measurable  s.t. }  \|f\|_{\infty,\alpha}<+\infty\right\}
$$
with norm
$$
\|f\|_{\infty,\alpha}\eqdef \|(1+\| \cdot\|)^{-\alpha} f\|_{\infty}=\esssup_{\V x \inR^d} \left((1+\|\V x\|)^{-\alpha} |f(\V x)|\right),
$$
where the order $\alpha\in \R^+$ puts a cap on the rate of growth of $f$ at infinity.
In conformity with the relation
$L_\infty(\R^d)=\big( L_1(\R^d)\big)'$,  $L_{\infty,\alpha}(\R^d)$ is the continuous dual of $L_{1,-\alpha}(\R^d)$:
the Schwartz-Banach space associated with the weighted $L_1$ norm
$$
\|g\|_{1,-\alpha}\eqdef \|(1+\|\cdot \|)^{\alpha} g \|_{1}=\int_{\R^d} (1+\|\V x\|)^{\alpha} g(\V x)\dint \V x
$$
where the switch to a negative exponent $-\alpha\le 0$ now demands that $g$ decays at infinity; e.g., a sufficient requirement for $g\in L_{1,-\alpha}(\R^d)$ is $|g(\V x)|\le \frac{C}{(1+\|\V x\|)^{\alpha+ d+ \epsilon}}$ with $\epsilon>0$.

Finally, since our primary interest is with ordinary functions that are well-defined pointwise, we also consider the Banach space
$$
C_{\rm{b},\alpha}(\R^d)=\left\{f: \R^d\to \R \mbox{ continuous and s.t. }  \|f\|_{\infty,\alpha}<+\infty\right\}
$$
that shares the same norm as $L_{\infty,\alpha}(\R^d)$, but whose elements (functions) are constrained to be continuous.
The hierarchy between these various spaces is described by the embedding relations
\begin{align*}
C_{\rm{b},\alpha}(\R^d) \embedIso L_{\infty,\alpha}(\R^d) \embedC \Spc S'(\R^d)\\
\Spc S(\R^d) \embedD L_{1,-\alpha}(\R^d) \embedIso \big(L_{\infty,\alpha}(\R^d)\big)',
\end{align*}
with additional explanations given in Appendix A.

%
\subsection{Characterization of linear operators and their adjoint}

A linear operator
is represented by an upright capital letter such as $\Lop$ or $\Op G$. Let $\Spc X$ and $\Spc Y$ be two topological vector spaces with continuous duals $\Spc X'$ and $\Spc Y'$, respectively. Then, the notation $\Op G: \Spc X \toC \Spc Y$ indicates that $\Op G$
continuously maps $\Spc X \to \Spc Y$. The adjoint of $\Op G$ is the unique operator $\Op G^\ast: \Spc Y' \toC \Spc X'$ such that
$\langle \Op G  x, y' \rangle_{\Spc Y \times \Spc Y'}=\langle x, \Op G^\ast y' \rangle_{\Spc X \times \Spc X'}$ for any $x\in \Spc X$ and $y'\in \Spc Y'$.
Moreover, if $\Spc X$ and $\Spc Y$ are both Banach spaces, then the norm of the operator is preserved \cite{Rudin1991}, 
as expressed by
$$
\| \Op G\|_{\Spc X\to \Spc Y}\eqdef \sup_{\varphi \in \Spc X\backslash\{0\}} \frac{\|\Op G \varphi \|_{\Spc Y} }{\|\varphi\|_\Spc X}=\| \Op G^\ast \|_{\Spc Y'\to \Spc X'}.
$$
The attractiveness of the Schwartz-Banach setting with $\Spc S(\R^d) \embedD \Spc X$ is the convenience of being able to specify the operator
in the more constrained---but mathematically foolproof---scenario $\Op G: \Spc S(\R^d) \toC \Spc Y \embedC \Spc S'(\R^d)$.
The critical property,  then, is the existence of a bound of the form
\begin{align}
\| \Op G \varphi \|_{\Spc Y}\le C \|\varphi\|_{\Spc X} \quad \mbox{for all} \quad \varphi \in \Spc S(\R^d),
\end{align}
which allows one to extend the domain of the operator to the complete space $\Spc X$ by continuity; i.e., $\Op G: \Spc X \toC \Spc Y$ with 
$\| \Op G\|_{\Spc X\to \Spc Y}\le C$.
%
This is a powerful extension principle that relies on the bounded-linear-transformation (B.L.T.) theorem.

\begin{theorem}[{\cite[Theorem I.7, p. 9]{Reed1980}}]
\label{Theo:BLT}
Let $\Op G$ be a bounded linear transformation from a normed space $(\Spc Z,\|\cdot\|_\Spc Z)$ to a complete normed space $(\Spc Y,\|\cdot\|_\Spc Y)$. Then, $\Op G$ has a unique extension to a bounded linear transformation (with the same bound) 
from the completion
of $(\Spc Z,\|\cdot\|_\Spc Z)$ to $(\Spc Y,\|\cdot\|_\Spc Y)$.
\end{theorem}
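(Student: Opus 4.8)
The plan is to define the extension, which I will call $\overline{\Op G}$, by continuity on the (isometrically embedded, dense) copy of $\Spc Z$ inside its completion $\overline{\Spc Z}$, and then to verify in turn that it is well defined, linear, bounded with the same operator norm, and unique. Write $C=\|\Op G\|_{\Spc Z \to \Spc Y}$ for the bound, so that $\|\Op G z\|_\Spc Y \le C\|z\|_\Spc Z$ for every $z \in \Spc Z$. Since $\Spc Z$ is dense in $\overline{\Spc Z}$, each $z \in \overline{\Spc Z}$ is the limit of some sequence $(z_n)_{n \in \N}$ drawn from $\Spc Z$, and any such approximating sequence is necessarily Cauchy.

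First I would construct the candidate extension. Given $z \in \overline{\Spc Z}$ and an approximating sequence $(z_n)$ as above, linearity and boundedness of $\Op G$ give $\|\Op G z_n - \Op G z_m\|_\Spc Y \le C\|z_n - z_m\|_\Spc Z$, so $(\Op G z_n)_{n \in \N}$ is Cauchy in $\Spc Y$. Because $\Spc Y$ is complete, this image sequence converges, and I would define $\overline{\Op G} z \eqdef \lim_{n \to \infty} \Op G z_n$.

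The step I expect to be the crux is \emph{well-definedness}: the limit must not depend on the chosen approximating sequence. If $(z_n)$ and $(z_n')$ both tend to $z$, then $\|z_n - z_n'\|_\Spc Z \to 0$, whence $\|\Op G z_n - \Op G z_n'\|_\Spc Y \le C\|z_n - z_n'\|_\Spc Z \to 0$, so the two image sequences share the same limit. This is where the Lipschitz bound carries the argument; everything else is bookkeeping. In particular, taking the constant sequence $z_n \equiv z$ for $z \in \Spc Z$ shows that $\overline{\Op G}$ genuinely extends $\Op G$.

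It then remains to collect the properties. Linearity of $\overline{\Op G}$ follows from that of $\Op G$ together with the continuity of the vector-space operations, applied to sums and scalar multiples of approximating sequences. For the norm, passing to the limit in $\|\Op G z_n\|_\Spc Y \le C\|z_n\|_\Spc Z$ and using continuity of the norms yields $\|\overline{\Op G} z\|_\Spc Y \le C\|z\|_{\overline{\Spc Z}}$, hence $\|\overline{\Op G}\|_{\overline{\Spc Z} \to \Spc Y} \le C$; the reverse inequality is immediate because $\overline{\Op G}$ restricts to $\Op G$ on $\Spc Z$, so the two bounds coincide. Finally, uniqueness is the standard density argument: any continuous extension must agree with $\Op G$ on the dense subset $\Spc Z$, and two continuous maps that agree on a dense set agree everywhere.
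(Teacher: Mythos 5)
Your proof is correct and complete; it is the standard density-plus-completeness argument for the B.L.T. theorem (define the extension on limits of Cauchy sequences, check well-definedness via the Lipschitz bound, then pass linearity, the norm bound, and uniqueness to the limit). Note that the paper does not prove this statement itself—it quotes it from Reed and Simon—so there is nothing to compare against beyond observing that your argument is exactly the one given in that reference.
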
 

The other foundational result that supports the present construction is Schwartz' kernel theorem \cite{Gelfand-Villenkin1964}, which states that the application of the operator $\Op G: \Spc S(\R^d) \toC \Spc S'(\R^d)$ to $\varphi$
yields a tempered distribution $\Op G\{\varphi\}: \Spc S(\R^d) \to \R$ 
specified by
  \begin{align}
\label{Eq:kernelrep}
\psi \mapsto \langle\Op G\{\varphi\},\psi \rangle=\langle g(\cdot,\cdot),\psi\otimes \varphi\rangle,
\end{align}
where $g(\cdot,\cdot)\in \Spc S'(\R^d \times \R^d)$ is the kernel of the operator
and $\psi\otimes \varphi\in \Spc S(\R^d \times \R^d)$ with $(\psi \otimes \varphi)(\V x,\V y)\eqdef \psi(\V x)\varphi(\V y)$. This property is often symbolized by the formal ``integral'' equation
\begin{align}
\label{Eq:kernelrep2}
\Op G: \varphi \mapsto \int_{\R^d} g(\cdot,\V y) \varphi(\V y)\dint \V y
\end{align}
with a slight abuse of notation. 
Conversely, 
the right-hand side of
\eqref{Eq:kernelrep2} defines a continuous operator $\Spc S(\R^d) \toC \Spc S'(\R^d)$ for any $g(\cdot,\cdot) \in \Spc S'(\R^d \times \R^d)$.
The association between the kernel and the operator is unique and, hence,
transferable to $\Op G: \Spc X \toC \Spc Y$ by continuity\footnote{The kernel is also called the generalized impulse response of the operator. Formally, this is indicated as $g(\cdot,\V y)=\Op G\{\delta(\cdot-\V y)\}$ with a slight abuse of notation when $\delta(\cdot-\V y) \notin \Spc X$. }.
Likewise, we may describe the adjoint $\Op G^\ast: \Spc Y' \toC \Spc X'\embedC \Spc S'(\R^d)$ by means of the relation
$$
\Op G^\ast: \varphi \mapsto \int_{\R^d} g(\V y,\cdot) \varphi(\V y)\dint \V y$$
where the flipping of the indices of the kernel is the infinite-dimensional counterpart of the transposition of a matrix.


The bottom line is that the consideration of a
Schwartz-Banach space allows for a concrete and unambiguous description of linear functionals and 
linear operators in terms of generalized functions and generalized kernels, respectively.

\begin{proposition}[Representation of functionals and operators]
\label{Prop:Concrete}
Let $\Spc X$ be a Schwartz-Banach space.
Then, any continuous linear functional
$g: \Spc X \to \R$ is uniquely characterized by a single element
$\left.g\right|_{\Spc S(\R^d)}\in \Spc S'(\R^d)$: the restriction of $g$
to $\Spc S(\R^d) \to \R$. 
Likewise, any continuous linear operator $\Op G: \Spc X \to \Spc Y$ where
$\Spc Y$ is a Banach subspace of $\Spc S'(\R^d)$
 is uniquely characterized by a single element $g(\cdot,\cdot)\in \Spc S'(\R^d \times \R^d)$,
which is the Schwartz kernel of the restriction $\left.\Op G\right|_{\Spc S(\R^d)}: \Spc S(\R^d) \to \Spc Y\embedC \Spc S'(\R^d)$. \end{proposition}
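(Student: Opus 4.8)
The plan is to derive everything from the two defining properties of a Schwartz-Banach space---the continuity and the density of the embedding $\Spc S(\R^d) \embedD \Spc X$---supplemented by Schwartz' kernel theorem. Continuity ensures that restricting to $\Spc S(\R^d)$ produces a well-defined tempered distribution (resp. generalized kernel), whereas density is what upgrades ``agreement on test functions'' to genuine uniqueness.

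For the functional, I would first note that $g|_{\Spc S(\R^d)}$ is the composition of the continuous injection $\Spc S(\R^d)\embedD \Spc X$ with the continuous functional $g:\Spc X\to\R$; as a continuous linear functional on $\Spc S(\R^d)$ it belongs to $\Spc S'(\R^d)$ by definition. To see that $g$ is \emph{determined} by this restriction, I would show that the restriction map $\Spc X'\to\Spc S'(\R^d)$, $g\mapsto g|_{\Spc S(\R^d)}$, is injective: it is the adjoint of the embedding $\Spc S(\R^d)\embedD\Spc X$, whose range is dense, so the adjoint is injective by the dual-embedding principle (Theorem \ref{Theo:Dense}), which was already invoked in the proof of Proposition \ref{Prop:SchwartzBanach}. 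Concretely, if $g_1,g_2\in\Spc X'$ coincide on the dense subspace $\Spc S(\R^d)$, then the continuous functional $g_1-g_2$ vanishes on a dense set and hence on all of $\Spc X$.

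For the operator, I would observe that $\left.\Op G\right|_{\Spc S(\R^d)}$ is the composition $\Spc S(\R^d)\embedD\Spc X\toC\Spc Y\embedC\Spc S'(\R^d)$, hence a continuous map $\Spc S(\R^d)\toC\Spc S'(\R^d)$. Schwartz' kernel theorem then attaches to it a unique kernel $g(\cdot,\cdot)\in\Spc S'(\R^d\times\R^d)$, which is exactly the object claimed. Uniqueness of $\Op G$ from its kernel follows by chaining two uniqueness statements: the kernel theorem guarantees that $g(\cdot,\cdot)$ determines $\left.\Op G\right|_{\Spc S(\R^d)}$, and the density argument of the previous paragraph guarantees that $\left.\Op G\right|_{\Spc S(\R^d)}$ in turn determines $\Op G$ on all of $\Spc X$ by continuity.

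I do not expect a genuine obstacle here; the argument is essentially density-and-continuity bookkeeping. The single load-bearing hypothesis is the \emph{denseness} of $\Spc S(\R^d)$ in $\Spc X$: without it, distinct elements of $\Spc X'$ (or distinct operators) could share the same restriction and the characterization would fail. The continuity of the two flanking embeddings plays only the auxiliary role of placing the restricted functional in $\Spc S'(\R^d)$ (resp. making the restricted operator map into $\Spc S'(\R^d)$), so that both the notion of membership in $\Spc S'(\R^d)$ and the kernel theorem are available.
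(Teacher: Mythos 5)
Your proposal is correct and follows essentially the same route as the paper: density of $\Spc S(\R^d)$ in $\Spc X$ gives uniqueness of the restriction, continuity of the embeddings places the restricted functional in $\Spc S'(\R^d)$ (resp. makes the restricted operator a continuous map $\Spc S(\R^d)\toC \Spc S'(\R^d)$), and Schwartz' kernel theorem supplies the kernel. You merely spell out the density-implies-injectivity step that the paper's very terse proof leaves implicit.
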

\begin{proof}
The Schwartz-Banach property implies that $\Spc S(\R^d) \embedD \Spc X$. Hence,
both $g$ and $\Op G$ are fully characterized by their restriction on $\Spc S(\R^d)$.
As such, $g$ is identified as an element of $\Spc S'(\R^d)$, while
$\Op G$ is uniquely characterized by its
 Schwartz kernel $g(\cdot,\cdot) \in \Spc S'(\R^d \times \R^d)$ when seen as an operator
 from $\Spc S(\R^d)$ to $\Spc S'(\R^d)$.
%
%
%
%
%
%

 \end{proof}
The common practice is to indicate the correspondences in Proposition \ref{Prop:Concrete} by \eqref{Eq:dualproduct} and \eqref{Eq:kernelrep2}, respectively.
%
One should keep in mind, however, that the rigorous interpretation of these relations involves the limit/extension process:
\begin{align*}
\langle g, f\rangle =\int_{\R^d} g(\V y) f(\V y)\dint \V y\eqdef  \lim_{i} \langle g, f_i\rangle \\
\int_{\R^d} g(\cdot,\V y) f(\V y)\dint \V y\eqdef   \lim_{i} \int_{\R^d} g(\cdot,\V y) f_i(\V y)\dint \V y
\end{align*}
for any sequence $(f_i)$ in $\Spc S(\R^d)$ such that
$\|f-f_i\|_{\Spc X}\to 0$ as $i\to \infty$, with the implicit understanding that the underlying ``integrals'' are the symbolic representations of 
continuous linear functionals acting on $f$ (resp. $f_i$).


\section{Spline-admissible operators}

The identification of the native space for an admissible operator $\Lop$ essentially boils down to the characterization of the solutions of the linear differential equation
$\Lop f=w$ for a suitable class of excitations $w\in \Spc X'$. 
This requires that $\Lop$ be invertible in an appropriate sense. As a minimum, we ask that $\Lop$ be injective (with a well-defined inverse $\Lop^{-1}$) when we restrict its domain to $\Spc S(\R^d)$.

\begin{definition} [Admissible operator]
\label{Def:splineadmis3}
A linear operator $\ \Op L: \Spc S(\R^d) \to \Spc S'(\R^d)$ 
is called {\em spline-admissible} 
if there exist an order $\alpha\inR$ of algebraic growth, an inverse operator $\Op \Lop^{-1}$, and a finite-dimensional space
$\Spc N_{\V p}={\rm span}\{p_n\}_{n=1}^{N_0}$ such that
\begin{enumerate}
\item \label{Item:Laststability} $\Lop$ and $\Lop^\ast$ continuously map $\Spc S(\R^d)\to L_{1,-\alpha}(\R^d)$. 
(The continuity of the adjoint is required for the extended version of the operator
$\Lop: L_{\infty,\alpha}(\R^d)\to \Spc S'(\R^d)$ in Condition \ref{Item:FiniteDimNullSpace} to be well defined.)

\item \label{Item:Inversestability} $\Lop^{-1}$ and $\Lop^{-1\ast}$ continuously map $L_{1,-\alpha}(\R^d)\to L_{\infty,\alpha}(\R^d)$. 
(The assumption $\Lop^{-1}: L_{1,-\alpha}(\R^d)\toC L_{\infty,\alpha}(\R^d)$ is actually sufficient as it implies
that $\Lop^{-1\ast}: L_{1,-\alpha}(\R^d)\toC L_{\infty,\alpha}(\R^d)$ by duality.)


\item \label{Item:Invertibility}  Invertibility: $\Lop^{-1\ast}\Lop^\ast \varphi=\varphi$ and 
$\Lop^{-1}\Lop \varphi=\varphi$ for all $\varphi \in \Spc S(\R^d)$.

\item \label{Item:FiniteDimNullSpace} Restricted null space: $\Spc N_{\V p}=\Spc N_{\Lop }= \{p \in L_{\infty,\alpha}(\R^d): \Lop\{p\}=0\}.$
\end{enumerate}
\end{definition}

A preferred scenario is when $\Lop$ and $\Lop^{-1}$ are both linear-shift-invariant---that is, convolution operators---with respective frequency response $\hat L(\bw)$ and $1/\hat L(\bw)$. Then,
$\varphi \mapsto \Lop^{-1}\varphi=\rho_\Lop\ast \varphi\eqdef \int_{\R^d} \rho_\Lop(\V y) \varphi(\cdot-\V y)\dint \V y$, where
$$
\rho_\Lop(\V x)=\Fourier^{-1}\left\{\frac{1}{\hat L(\bw)} \right\}(\V x),
$$
which is the (generalized) inverse Fourier transform of $1/\hat L\in \Spc S'(\R^d)$ under the implicit assumption that the latter is a well-defined tempered distribution. In that case, we refer to 
$\rho_\Lop$, which satisfies the formal property $\Lop\{\rho_\Lop\}=\delta$, as the canonical Green's function of $\Lop$.

\begin{example} The derivative operator $\Dop=\frac{\dint}{\dint x}$ with $\Dop^\ast=-\Dop$ is spline-admissible with $N_0=1$ and $\alpha=0$.
Indeed, $\Dop: \Spc S(\R)\toC\Spc S(\R)\embedC L_1(\R)$, while its null space is $\Spc N_\Dop={\rm span}\{p_1\}\subset C_{\rm b}(\R) \subseteq L_{\infty,0}(\R)=L_{\infty}(\R)$ with $p_1(x)=1$.
Its canonical inverse is given by
$\Dop^{-1}: \varphi \mapsto \rho_{\Dop} \ast \varphi$ with $ \rho_{\Dop}(x)=\frac{1}{2} {\rm sign}(x)$ and, hence, such that
$\Dop^{-1}: L_1(\R)\toC L_\infty(\R)$.
 \end{example}

Let us now briefly comment on the assumptions in Definition \ref{Def:splineadmis3}. Conditions \ref{Item:Laststability} and \ref{Item:Inversestability}
ensures that the composition of operators in Condition \ref{Item:Invertibility} (\ie $\Lop^{-1\ast}\Lop^\ast: \Spc S(\R^d) \toC L_{1,-\alpha}(\R^d) \toC \Spc S'(\R^d)$ and $
\Lop^{-1}\Lop: \Spc S(\R^d) \toC L_{1,-\alpha}(\R^d) \toC \Spc S'(\R^d)$) are legitimate. 
Condition \ref{Item:Laststability} limits the framework to operators that do not drastically affect decay. However, it does not penalize a loss of regularity. In particular, it is met by all constant-coefficient (partial) differential operators, which happen to be continuous $\Spc S(\R^d)\to \Spc S(\R^d) \embedC L_{1,-\alpha}(\R^d)$.
The condition with an adequate $\alpha$ is also satisfied by the fractional derivative operators $\Dop^\gamma$ or $(-\Delta)^{\gamma/2}$ (fractional Laplacian) whose impulse responses decay
like $1/\|\V x\|^{\gamma+d}$. 
This decay property can be used to show that the fractional derivatives $\Dop^\gamma$ with $\gamma>0$ are continuous $\Spc S(\R^d) \to L_{1,-(\gamma+d)}(\R^d)$ \cite{Unser2007}, while their Green's functions are included in $L_{\infty,\alpha}(\R^d)$ with $\alpha=(\gamma-d)$, which is a favourable state of affairs in the context of Condition \ref{Item:Laststability}.

An important observation is that the left-invertibility of $\Lop^\ast$ in Condition \ref{Item:Invertibility}  is equivalent
to $\Lop^{}\Lop^{-1}\varphi=\varphi$ for all $\varphi \in \Spc S(\R^d)$ with $\Lop^{-1}\varphi\in L_{\infty,\alpha}(\R^d)$ and $\Lop: L_{\infty,\alpha}(\R^d) \to \Spc S'(\R^d)$ extended accordingly by duality. Indeed, for any $\tilde \varphi \in \Spc S(\R^d)$,
we have that
\begin{align}
\label{Eq:InvertibleL}
\langle \varphi, \tilde\varphi \rangle=\langle\varphi,  \Lop^{-1\ast}\Lop^\ast \tilde \varphi \rangle=\langle \Lop\Lop^{-1} \varphi, \tilde\varphi \rangle,
\end{align}
where the factorization is legitimate, in reason of the compatible range and domain of the underlying operators. We then invoke the denseness of $\Spc S(\R^d)$ in $\Spc S'(\R^d)$ (see Proposition \ref{Prop:Sdense}) to extend the validity of \eqref{Eq:InvertibleL} for all $\tilde \varphi \in \Spc S'(\R^d)$, which yields the desired result.

While Condition \ref{Item:Invertibility} tells us that $\Lop$ is invertible on $\Spc S(\R^d)$, it does not guarantee that the property still holds when the domain is extended to $L_{\infty,\alpha}(\R^d)$
because the corresponding null space $\Spc N_\Lop=\Spc N_{\V p}$ in Condition \ref{Item:FiniteDimNullSpace} may be nontrivial. We shall resolve this ambiguity by 
factoring out the null-space components. Our approach is based on the construction 
of a projection operator $\Proj_{\Spc N_\V p}: L_{\infty,\alpha}(\R^d) \toC \Spc N_\V p$, as described next. The latter is then used in Section \ref{Sec:PseudoInverse2} to
specify a stable pseudo-inverse operator that is a corrected (or regularized) version of $\Lop^{-1}$.

\subsection{Biorthogonal system for the null space of $\Lop$}
\label{Sec:NulSpace}
%

The fundamental hypothesis here 
is that the growth-restricted null space $\Spc N_\Lop=\{f \in L_{\infty,\alpha}(\R^d): \Lop \{p\}= 0\}$ 
admits a basis $\V p=(p_1,\dots,p_{N_0})$ of finite dimension $N_0$.
The idea then is to select a 
set of analysis functionals  
$\phi_1,\dots,\phi_{N_0} \in \Spc Y 
\subseteq
 \Spc S'(\R^d)$ 
that satisfy the biorthogonality relation
\begin{align}
\label{Eq:biortho}
\langle \phi_m,p_n\rangle=\delta_{m,n},\quad m,n\in\{1,\dots,N_0\}.
\end{align}
To give a concrete meaning to the above duality product, we are assuming the existence\footnote{In view of the inclusion
$\Spc N_{\Lop}\subset L_{\infty,\alpha}(\R^d)$, the natural choice is to take $(\Spc Y, \Spc Y')=\big(L_{1,-\alpha}(\R^d),L_{\infty,\alpha}(\R^d)\big)$.
We shall see that this is extendable to $(\Spc Y, \Spc Y')=(\Spc X_\Lop,\Spc X'_\Lop)$.}
of a dual pair $(\Spc Y, \Spc Y')$ of subspaces of $\Spc S'(\R^d)$ such that
$\phi_1,\dots,\phi_{N_0} \in \Spc Y$ and $p_1,\dots,p_{N_0} \in \Spc Y'$.
Such a construction is always feasible (see Proposition \ref{Prop:biorthoexist}) with the 
choice of $\V \phi$ being free.
We also understand that there is a whole equivalence class of representations of $\Spc N_{\Lop}=\Spc N_{\V p}=\Spc N_{\tilde{\V p}}$ with 
$\tilde {\V p}=\M B{\V p}$, under the constraint that the matrix $\M B\in \R^{N_0\times N_0}$ be invertible.

\begin{definition}[Biorthogonal system] 
\label{Def:biorthogonalsystem}

A pair $(\V \phi,\V p)$ with $\V \phi=(\phi_1,\dots,\phi_{N_0})\in \Spc Y^{N_0}$ and $\V p=(p_1,\dots,p_{N_0})\in (\Spc Y')^{N_0}$  is called a {\em biorthogonal system} for the finite-dimensional subspace $\Spc N_{\V p}={\rm span}\{p_n\}_{n=1}^{N_0}\subset \Spc Y' \subseteq \Spc S'(\R^d)$ if any
$p \in \Spc N_{\V p}$ admits a unique expansion of the form
\begin{align}
\label{Eq:biorthoexpand}
p=\sum_{n=1}^{N_0} \langle \phi_n, p\rangle p_n.
\end{align}
The natural norm induced on $\Spc N_\V p$ by such a system is
$$\|p\|_{\Spc N_\V p}=\|\V \phi(p)\|_2=\left(\sum_{n=1}^{N_0}|\langle \phi_n, p\rangle|^2\right)^\frac{1}{2}.$$ 
The system is said to be {\em universal} if $\phi_n \in \Spc S(\R^d)$ for $n=1,\dots,N_0$.
\end{definition}
The unicity of the representation in Definition \ref{Def:biorthogonalsystem} implies that $\V p$ be a basis of $\Spc N_\V p$, while the validity of \eqref{Eq:biorthoexpand} for $p=p_n$ implies
that the underlying functions be biorthogonal, as expressed by \eqref{Eq:biortho}.

Our next proposition ensures the existence of such systems for any given basis $\V p$
under our working hypothesis $\Spc Y\supseteq \Spc S(\R^d)$.

\begin{proposition}[Existence of biorthogonal systems]
\label{Prop:biorthoexist}
Let $\Spc N_\V p={\rm span}\{p_n\}_{n=1}^{N_0}$ be a $N_0$-dimensional subspace of $\Spc S'(\R^d)$.
Then, there always exists some (universal) biorthogonal set of functionals $\phi_1,\dots,\phi_{N_0} \in \Spc S(\R^d)$.
\end{proposition}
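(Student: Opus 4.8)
The plan is to recognize this statement as nothing more than the existence of a dual basis for a finite family of linearly independent continuous linear functionals. Indeed, each $p_n\in\Spc S'(\R^d)$ is by definition the continuous linear functional $\phi\mapsto\langle p_n,\phi\rangle$ on $\Spc S(\R^d)$, and the hypothesis that $\Spc N_\V p$ is $N_0$-dimensional means precisely that $p_1,\dots,p_{N_0}$ are linearly independent as such functionals. First I would therefore reduce the problem to producing test functions $\phi_1,\dots,\phi_{N_0}\in\Spc S(\R^d)$ satisfying the biorthogonality relation \eqref{Eq:biortho}; the expansion property \eqref{Eq:biorthoexpand} that characterizes a biorthogonal system then follows by a one-line linear-algebra computation, and universality is automatic since the construction already lives in $\Spc S(\R^d)$.

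The crux is the surjectivity of the evaluation map
\begin{align*}
\Op T:\Spc S(\R^d)\to\R^{N_0},\qquad \phi\mapsto\big(\langle p_1,\phi\rangle,\dots,\langle p_{N_0},\phi\rangle\big).
\end{align*}
To establish it I would argue by contradiction: if the range of $\Op T$ were a proper subspace of $\R^{N_0}$, there would exist a nonzero vector $\V c=(c_1,\dots,c_{N_0})$ annihilating it, so that $\sum_{n=1}^{N_0}c_n\langle p_n,\phi\rangle=\langle \sum_n c_n p_n,\phi\rangle=0$ for every $\phi\in\Spc S(\R^d)$. By the very definition of the zero element of $\Spc S'(\R^d)$, this forces $\sum_n c_n p_n=0$, contradicting the linear independence of the $p_n$. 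Hence $\Op T$ maps onto all of $\R^{N_0}$.

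With surjectivity in hand, I would select, for each $m\in\{1,\dots,N_0\}$, a preimage $\phi_m\in\Spc S(\R^d)$ of the canonical basis vector $\V e_m$ of $\R^{N_0}$, i.e.\ $\Op T(\phi_m)=\V e_m$; unwinding the definition gives $\langle p_n,\phi_m\rangle=\delta_{n,m}$, which is exactly \eqref{Eq:biortho}. Finally, for any $p=\sum_n a_n p_n\in\Spc N_\V p$ one checks that $\langle \phi_m,p\rangle=a_m$, so $p=\sum_m\langle\phi_m,p\rangle p_m$ is the required unique expansion and $(\V\phi,\V p)$ is a biorthogonal system in the sense of Definition \ref{Def:biorthogonalsystem}. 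Equivalently, one may proceed in two steps, first choosing arbitrary $\psi_1,\dots,\psi_{N_0}\in\Spc S(\R^d)$ that render the matrix $\M A=[\langle p_n,\psi_k\rangle]_{n,k}$ invertible (which is again exactly the content of the surjectivity) and then defining the $\phi_m$ as the linear combinations of the $\psi_k$ prescribed by $\M A^{-1}$; the direct selection above is cleaner. The only genuine obstacle is the surjectivity step, and even that is mild: it rests solely on the non-degeneracy of the distributional pairing together with the linear independence of $\{p_n\}$, so neither estimates nor compactness are required.
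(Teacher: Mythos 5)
Your proof is correct, and it takes a genuinely different route from the paper's. The paper invokes a Hahn--Banach separation theorem for topological vector spaces (Rudin's Theorem 3.5, applied in $\Spc V=\Spc S'(\R^d)$): for each $n$ it separates $p_n$ from the closed subspace ${\rm span}_{m\neq n}\{p_m\}$, producing a continuous linear functional $\phi_n$ on $\Spc S'(\R^d)$, and then tacitly identifies $\big(\Spc S'(\R^d)\big)'$ with $\Spc S(\R^d)$ via reflexivity to get the universality claim $\phi_n\in\Spc S(\R^d)$. You instead work on the test-function side from the outset: the surjectivity of the joint evaluation map $\Op T:\Spc S(\R^d)\to\R^{N_0}$ follows from nothing more than the linear independence of the $p_n$ and the definition of the zero element of $\Spc S'(\R^d)$, after which the $\phi_m$ are simply preimages of the canonical basis vectors. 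Your argument is the more elementary of the two --- finite-dimensional linear algebra plus non-degeneracy of the distributional pairing, with no separation theorem and no appeal to reflexivity --- and it lands directly in $\Spc S(\R^d)$, so universality costs nothing. The paper's route is the one that generalizes verbatim to an arbitrary ambient topological vector space $\Spc V$ (the functionals then live in $\Spc V'$), which matters elsewhere in the paper where biorthogonal systems are taken in spaces other than $\Spc S(\R^d)$; but for the statement as posed, your proof is complete and arguably cleaner.
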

\begin{proof}
This result is deduced from the following variant of the Hahn-Banach theorem
with $\Spc V=\Spc S'(\R^d)$.
\begin{theorem}[{\cite[Theorem 3.5] 
{Rudin1991}}]
\label{Theo:biorthoexist0}
Let $\Spc Z$ be a linear subspace of a topological vector space $\Spc V$, and $v_0$ be an element of $\Spc V$. If $v_0$ is not in the closure of $\Spc Z$, then there exists a continuous linear functional $\phi$ on $\Spc V$ such that $\langle \phi, v_0\rangle=1$ but $\langle \phi, z\rangle=0$ for every $z \in \Spc Z$.
\end{theorem}
We then proceed 
by successive exclusion of $v_0=p_n$ with $\phi=\phi_n$ and $\Spc Z={\rm span}_{m\neq n}\{p_m\}$, while the finite dimensionality of $\Spc Z$ and the linear independence of the $p_m$ ensures that $p_n \notin \overline{\Spc Z}=\Spc Z$.
\end{proof}
 \begin{table}
 \begin{tabular}{p{6cm}p{2.75cm}p{2.5cm}}
 \hline\\[-2ex]
    Description & Operator & Kernel\\[1ex]
    \hline\\[-2ex]   
Riesz map $\ \Spc N_{\V \phi} \to \Spc N_{\V p}=\Spc N'_{\V \phi}$ & $\Op R_{\V p}$ & $\displaystyle \sum_{n=1}^{N_0} p_n(\V  x)  p_n(\V y)$ \\[2ex]
Riesz map $\ \Spc N_{\V p} \to \Spc N_{\V \phi}=\Spc N'_{\V p}$ & $\Op R_{\V \phi}$ & $\displaystyle \sum_{n=1}^{N_0} \phi_n(\V  x)  \phi_n(\V y)$ \\[2ex]
Projector $\ \Spc X_\Lop'\subseteq L_{\infty,\alpha}(\R^d) \to \Spc N_{\V p}$ & ${\rm Proj}_{\Spc N_{\V p}}=\Op R_{\V p}\Op R_{\V \phi}$ & $\displaystyle \sum_{n=1}^{N_0} p_n(\V  x)  \phi_n(\V y)$ \\[2ex]
Projector $\ \Spc X_\Lop\supseteq L_{1,-\alpha}(\R^d) \to \Spc N_{\V \phi}$ & ${\rm Proj}_{\Spc N_{\V \phi}}=\Op R_{\V \phi}\Op R_{\V p}$ & $\displaystyle \sum_{n=1}^{N_0} \phi_n(\V  x)  p_n(\V y)$ \\[2.5ex]
    \hline
\end{tabular}
\caption{\label{tab:ProjOperators} Complete set of operators associated with the biorthogonal system $(\V \phi,\V p)$
with $\Spc N_{\V p}={\rm span}\{p_n\}_{n=1}^{N_0}$ and $\Spc N_{\V \phi}={\rm span}\{\phi_n\}_{n=1}^{N_0}$.}
\end{table}

\begin{example} We recall that the derivative operator $\Dop=\frac{\dint}{\dint x}$ from Example 1 is spline-admissible with $\alpha=0$, $N_0=1$, 
 and $\Spc N_\Dop={\rm span}\{p_1\} \subset L_{\infty}(\R)$ where $p_1(x)=1$.
As complementary analysis functional, we may choose
any function $\phi_1 \in L_{1}(\R)$ (or $\phi_1 \in \Spc S (\R)$ if we are aiming at a universal solution)
such that $\int_{\R} \phi_1(x)\dint x=\langle \phi_1, 1\rangle=1$. 
\end{example}

\begin{example}
The $N_0$th-order derivative operator $\Lop=\Dop^{N_0}: \Spc S(\R)\toC\Spc S(\R)$ is spline-admissible with
 $\Lop^{-1}: \varphi \mapsto \rho_{\Dop^{N_0}} \ast \varphi$ where $\rho_{\Dop^{N_0}}(x) =\frac{1}{2} {\rm sign}(x)\frac{x^{N_0-1}}{(N_0-1)!}$, $\alpha=(N_0-1)$, and $\Spc N_{\Dop^{N_0}}={\rm span}\{p_n\}_{n=1}^{N_0}\subset L_{\infty,N_0-1}(\R)$ with
 $p_n(x)=x^{n-1}$. A possible choice of universal biorthogonal system is 
$(\tilde{\V \phi},\tilde{\V p})$  with 
$\tilde \phi_n(x)=\tilde p_n(x) \ee^{-x^2/2} \in \Spc S(\R)$ and $\tilde p_1,\dots,\tilde p_{N_0}$ the normalized Hermite polynomials of degree $0$ to $(N_0-1)$. By definition, the latter form
a basis of $\Spc N_{\Dop^{N_0}}$---the space of polynomials of degree $(N_0-1)$---that fullfils the 
(bi)-orthogonality relation $\langle \ee^{-x^2/2} \tilde p_n, \tilde p_m\rangle=\delta[m-n]$.
\end{example}

Given such a system, we then specify
$\Spc N_{\V p}$ 
and 
$\Spc N_{\V \phi}={\rm span}\{\phi_n\}_{n=1}^{N_0}$
as a dual pair of $N_0$-dimensional Hilbert spaces equipped with the inner products $\langle p,q\rangle_{\Spc N_{\V p}}=\langle \Op R_{\V \phi}\{p\}, q\rangle$ and
$\langle \phi,\varphi\rangle_{\Spc N_{\V \phi}}=\langle \Op R_{\V p}\{\phi\}, \varphi\rangle$, respectively, where
\begin{align*}
	\Op R_{\V p}\{\phi\}&=\sum_{n=1}^{N_0} \langle p_n, \phi\rangle p_n\\
\Op R_{\V \phi}\{p\}&=\sum_{n=1}^{N_0} \langle \phi_n, p \rangle \phi_n.
\end{align*}
Indeed, $\phi_n=\Op R_{\V \phi}\{p_n\}$ and $p_n=\Op R_{\V p}\{\phi_n\}$, which allows us to identify
$\Op R_{\V \phi}$ (resp. $\Op R_{\V p}$) as the Riesz map $\Spc N_{\V p}\to\Spc N_{\V \phi}=\Spc N'_{\V p}$
(resp. $\Op R_{\V p}: \Spc N_{\V \phi}\to\Spc N_{\V p}=\Spc N'_{\V \phi}$). 
For the proper interpretation of this (Riesz) pairing, we recall that the dual space $\Spc N'_{\V p}$ is an abstract entity that is formed by the linear functionals that are continuous on $\Spc N_{\V p}$; strictly speaking, each functional is an equivalence class of tempered distributions. The notation $\Spc N'_{\V p}=\Spc N_{\V \phi}$ indicates 
that $\Spc N'_{\V p}$ is isometrically isomorphic to $\Spc N_{\V \phi}$, meaning that each (abstract) member of $\Spc N'_{\V p}$ has a unique representative in 
$\Spc N_{\V \phi}$, which then serves as our concrete descriptor.
The generic elements of these spaces are denoted by
$p \in \Spc N_{\V p}$ with $\|p\|_{\Spc N_{\V p}}=\|\V \phi(p)\|_2$, where
$\V \phi(p)=(\langle \phi_1,p\rangle,\dots, \langle \phi_{N_0},p\rangle)\inR^{N_0}$ and $\phi \in \Spc N_{\V \phi}$ with $\|\phi\|_{\Spc N_{\V \phi}}=\|\V p(\phi)\|_2$.

Under the working assumptions that $\Spc N_{\V \phi} \subseteq \Spc Y$ and
$\Spc N_\V p \subseteq 
\Spc Y'$, where $\Spc Y$ is a suitable Schwartz-Banach space,
the domain of continuity of these operators is extendable to 
$\Op R_{\V p}: \Spc Y \toC \Spc N_\V p$ and $\Op R_{\V \phi}: \Spc Y' \toC \Spc N_\V \phi$.
The corresponding projection operators are
\begin{align}
\Proj_{\Spc N_\V \phi}\{g\}&=\sum_{n=1}^{N_0} \langle p_n, g\rangle \phi_n=\Op R_{\V \phi}\Op R_{\V p}\{g\} \label{Eq:ProjNphi},\\
\Proj_{\Spc N_\V p}\{f\}&=\sum_{n=1}^{N_0} \langle \phi_n, f\rangle p_n=\Op R_{\V p}\Op R_{\V \phi}\{f\}\
\end{align}
for any $g \in \Spc Y$ and $f \in \Spc Y'$
with the property that $\Proj_{\Spc N_\V \phi}\{\phi\}=\phi$ for all $\phi \in \Spc N_\V \phi$ and $\Proj_{\Spc N_\V p}\{p\}=p$ for all $p \in \Spc N_\V p$.
%
We also note that $\Proj^\ast_{\Spc N_\V \phi}=\Proj_{\Spc N_\V p}$, which emphasizes the symmetry of the construction.

We can also rely on the generic duality bound $|\langle p_n,g\rangle|\le \|p_n\|_{\Spc Y'} \|g\|_{\Spc Y}$ to get a handle on the continuity properties of these operators.
Specifically, based on \eqref{Eq:ProjNphi}, we find that
\begin{align}
\left\|\V p\big(\Proj_{\Spc N_\V \phi}\{g\}\big)\right\|_2&=
\left(\sum_{n=1}^{N_0} |\langle p_n,g\rangle|^2 \right)^{1/2} \nonumber\\
&\le \left( \sum_{n=1}^{N_0} \|p_n\|^2_{\Spc Y'}\right)^{1/2} \|g\|_{\Spc Y} \label{Eq:ProjectorBound}
\end{align}
where the leading constant on the right-hand side provides an upper bound on the norm of the operator $\Proj_{\Spc N_\V \phi}: \Spc Y \toC \Spc N_\V \phi\subset \Spc Y$.
Likewise, we have that $\Proj_{\Spc N_\V p}: \Spc Y' \toC \Spc N_\V p\subseteq \Spc Y'$
with $\|\Proj_{\Spc N_\V p}\|_{\Spc Y'\to\Spc N_\V p} \le \left( \sum_{n=1}^{N_0} \|\phi_n\|^2_{\Spc Y}\right)^{1/2}$.

The whole setup is summarized in Table \ref{tab:ProjOperators} for the choice of spaces $\Spc Y=L_{1,-\alpha}(\R^d)$ and $\Spc Y'=L_{\infty,\alpha}(\R^d)$, which is adopted for the sequel. 
\subsection{Specification of a suitable pseudo-inverse operator}
\label{Sec:PseudoInverse2}

Under our hypotheses,
the projection 
operator $\Proj_{\Spc N_\V p}$ defined by \eqref{Eq:ProjNphi}
is continuous $\Spc Y'=L_{\infty,\alpha}(\R^d)\toC L_{\infty,\alpha}(\R^d)$. This, in turn, ensures the continuity of
$$\Lop_{\V \phi}^{-1}\eqdef(\Identity-\Proj_{\Spc N_\V p}) \circ \Lop^{-1}: L_{1,-\alpha}(\R^d) \toC L_{\infty,\alpha}(\R^d)\toC L_{\infty,\alpha}(\R^d).$$ 
By observing that $(\Identity-\Proj_{\Spc N_\V p})^\ast=(\Identity-\Proj_{\Spc N_\V \phi})$,
we then readily deduce that the adjoint of $\Lop_{\V \phi}^{-1}$ is such that
\begin{align}
\label{Eq:PseudoInverseDef}
\Lop_{\V \phi}^{-1\ast}=\Lop^{-1\ast}(\Identity-\Proj_{\Spc N_\V \phi}) : L_{1,-\alpha}(\R^d)  \toC L_{\infty,\alpha}(\R^d),
\end{align}
owing to the property that $L_{1,-\alpha}(\R^d) \embedIso \big(L_{1,-\alpha}(\R^d)\big)''=\big(L_{\infty,\alpha}(\R^d)\big)'$.

While \eqref{Eq:PseudoInverseDef}  ensures that $\Lop_{\V \phi}^{-1\ast}$ is well-defined on $L_{1,-\alpha}(\R^d)$, we also want to make sure
that the range of this operator can be restricted to the primary space $\Spc X$ (the pre-dual of $\Spc X'$ mentioned in the introduction), which calls for some additional compatibility hypotheses.

\begin{definition} [Compatibility of $(\Lop,\Spc Y)$]
\label{Def:Hypotheses}
Let $\Lop$ be a linear operator and $\Spc Y$ a Banach subspace of $\Spc S'(\R^d)$.
We say that the pair $(\Lop, \Spc Y)$ is compatible if
\begin{enumerate}
\item There exists a Schwartz-Banach space $\Spc X$ (see Definition \ref{Def:SchwartzBanach}) such that $\Spc Y=\Spc X'$;
\item the operator $\Lop$ is spline-admissible with order $\alpha$ (see Definition \ref{Def:splineadmis3});
\item the adjoint operator $\Lop^\ast$ is continuous $\Spc X\toC \Spc S'(\R^d)$ and injective;
\item there exists a universal 
biorthogonal system $(\V \phi,\V p)$ 
of the null space of $\Lop$ 
such that
$
\Lop^{-1\ast}_\V \phi 
: L_{1,-\alpha}(\R^d) \toC \Spc X
$
where 
$\Lop^{-1\ast}_\V \phi$ is specified by \eqref{Eq:PseudoInverseDef}. This condition is called $\Spc X$-stability. 
\end{enumerate}
%
%



\end{definition}
Conditions 1 and 2 are explicit and hardly constraining. The injectivity of $\Lop^\ast$ 
is equivalent to the intersection between the extended null space of $\Lop^\ast$ and $\Spc X$ being trivial. 
The most constraining requirement is Condition 4, which needs to be checked on a case-by-case basis. Interestingly, we shall see that, if the condition holds for one particular biorthogonal system
$(\V \phi,\V p)$ with $\Spc N_\V \phi \subset \Spc S(\R^d)$ (universality property), then it also holds for any other admissible biorthogonal system
$(\tilde{\V \phi},\tilde{\V p})$ as long as $\Spc N_{\tilde{\V \phi}}\subset\Spc X_\Lop$, where the pre-dual space $\Spc X_\Lop$ is characterized in Theorem \ref{Theo:pre-dual2}.


The operator $\Lop^{-1\ast}_\V \phi$ in \eqref{Eq:PseudoInverseDef} is fundamental to our construction.
The key will be to extend its domain to make it surjective over $\Spc X$. To that end, we now introduce
a suitable pre-Banach space that will then be completed to yield our pre-dual space $\Spc X_\Lop$.

\begin{definition}[Pre-Banach space for $(\Lop,\Spc X')$]
\label{Def:PreNative}
Under the compatibity hypotheses of Definition \ref{Def:Hypotheses},
we specify the pre-Banach space for $(\Lop,\Spc X')$ as the vector space
\begin{align}
\label{Eq:pre-dual1}
\Spc P_{\Lop}=\{\Lop^\ast \varphi + \phi: \varphi \in \Spc S(\R^d), \phi\in\Spc N_{\V \phi}\}\subseteq L_{1,-\alpha}(\R^d).
\end{align}

\end{definition}

\begin{proposition} 
\label{Prop:PreBanach}
Under the compatibity hypotheses of Definition \ref{Def:Hypotheses},
for any $g \in \Spc P_{\Lop}$, there is a unique pair $(\varphi=\Lop_\V \phi^{-1\ast}g,
\phi=\Proj_{\Spc N_\V \phi}\{g\})\in \Spc S(\R^d) \times \Spc N_\V \phi$ such that 
$g=\Op L^\ast \varphi + \phi$, where the two underlying operators are defined by \eqref{Eq:PseudoInverseDef} and \eqref{Eq:ProjNphi}, respectively. 
In particular, this implies the following:
\begin{enumerate}
\item Null-space property: $\Lop_\V \phi^{-1\ast}\phi=0$ for all $\phi \in \Spc N_\V \phi\subset \Spc P_\Lop$.
\item Left invertibility of $\Lop^\ast$: $\Lop_\V \phi^{-1\ast}\Lop^\ast\varphi=\Lop^{-1\ast}\Lop^\ast\varphi=\varphi$ for all $\varphi\in \Spc S(\R^d)$. 
\item Pseudo-right-invertibility: $\Lop^{\ast}\Lop_\V \phi^{-1\ast}g=(\Identity-\Proj_{\Spc N_\V \phi})\{g\}$ for all $g\in \Spc P_\Lop$.
\item $\Spc P_{\Lop}$ has a direct sum decomposition as
$\Spc P_{\Lop}=\Lop^\ast\big(\Spc S(\R^d)\big)\oplus \Spc N_\V \phi$.
\item $\Spc P_{\Lop}$, equipped
with the composite norm
\begin{align}
\label{Eq:XLnorm}
\|g\|_{\Spc X_\Lop}\eqdef \max(\underbrace{\|\Lop^{-1\ast}_\V \phi g\|_{\Spc X}}_{\|\varphi\|_\Spc X},\underbrace{\|\V p(\Proj_{\Spc N_\V \phi}\{g\})\|_{2}}_{\|\V p(\phi)\|_2}),
\end{align}
is a normed subspace of $L_{1,-\alpha}(\R^d)$. 
\item $\Lop^{-1\ast}_\V \phi$ 
is bounded $\Spc P_\Lop \to \Spc X$ with $\|\Lop^{-1\ast}_\V \phi g\|_{\Spc X}\le \| g\|_{\Spc X_\Lop}$ for all $g\in \Spc P_\Lop$.
\item $\Proj_{\Spc N_\V \phi}$ is bounded $\Spc P_\Lop\to\Spc N_\V \phi$ with $\|\Proj_{\Spc N_\V \phi}g\|_{\Spc X_\Lop}\le \| g\|_{\Spc X_\Lop}$
for all $g\in \Spc P_\Lop$.
\end{enumerate}
\end{proposition}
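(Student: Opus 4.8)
The plan is to prove the unique-decomposition statement first and then read off items 1--7 as short corollaries, since the two operators $\Lop^{-1\ast}_{\V\phi}$ and $\Proj_{\Spc N_{\V\phi}}$ are engineered precisely to extract the two summands of $g=\Lop^\ast\varphi+\phi$. Everything hinges on one algebraic fact: $\Proj_{\Spc N_{\V\phi}}$ annihilates the range $\Lop^\ast\big(\Spc S(\R^d)\big)$. Granting this, the projector reproduces the null-space part while $\Lop^{-1\ast}_{\V\phi}$ inverts the operator part, and all seven assertions reduce to bookkeeping.

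First I would establish the key fact. For $\varphi\in\Spc S(\R^d)$, the formula $\Proj_{\Spc N_{\V\phi}}\{h\}=\sum_{n=1}^{N_0}\langle p_n,h\rangle\phi_n$ from \eqref{Eq:ProjNphi} reduces the claim to showing $\langle p_n,\Lop^\ast\varphi\rangle=0$. Here $\Lop^\ast\varphi\in L_{1,-\alpha}(\R^d)$ by Condition \ref{Item:Laststability} of Definition \ref{Def:splineadmis3} and $p_n\in L_{\infty,\alpha}(\R^d)$, so the pairing is legitimate; transporting $\Lop$ across the duality product gives $\langle p_n,\Lop^\ast\varphi\rangle=\langle\Lop p_n,\varphi\rangle$, which vanishes because $p_n$ lies in the growth-restricted null space $\Spc N_\Lop$ (Condition \ref{Item:FiniteDimNullSpace}). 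Hence $\Proj_{\Spc N_{\V\phi}}\{\Lop^\ast\varphi\}=0$, and combined with $\Proj_{\Spc N_{\V\phi}}\{\phi\}=\phi$ this yields $\Proj_{\Spc N_{\V\phi}}\{g\}=\phi$ for $g=\Lop^\ast\varphi+\phi$. Then $(\Identity-\Proj_{\Spc N_{\V\phi}})\{g\}=\Lop^\ast\varphi$, so by definition \eqref{Eq:PseudoInverseDef} and the invertibility relation $\Lop^{-1\ast}\Lop^\ast\varphi=\varphi$ of Definition \ref{Def:splineadmis3} we get $\Lop^{-1\ast}_{\V\phi}g=\varphi$. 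Uniqueness of the pair follows: any representation $g=\Lop^\ast\varphi'+\phi'$ must return $\phi'=\Proj_{\Spc N_{\V\phi}}\{g\}=\phi$ and $\varphi'=\Lop^{-1\ast}_{\V\phi}g=\varphi$, the latter also being forced by the injectivity of $\Lop^\ast$ (Condition 3 of Definition \ref{Def:Hypotheses}).

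The consequences are then immediate. Taking $\varphi=0$ gives item 1, taking $\phi=0$ gives the left-inverse identity of item 2, and applying $\Lop^\ast$ to $\Lop^{-1\ast}_{\V\phi}g=\varphi$ together with $\Lop^\ast\varphi=g-\phi$ gives item 3; the directness of $\Spc P_\Lop=\Lop^\ast\big(\Spc S(\R^d)\big)\oplus\Spc N_{\V\phi}$ (item 4) is exactly the triviality of the intersection, which the key fact forces. For item 5 I would check that the composite norm \eqref{Eq:XLnorm} is a genuine norm on the subspace $\Spc P_\Lop\subseteq L_{1,-\alpha}(\R^d)$: it is a maximum of two seminorms, hence homogeneous and subadditive, and it is definite because $\|\varphi\|_{\Spc X}=0$ forces $\varphi=0$ while $\|\V p(\phi)\|_2=0$ forces $\phi=0$ via the biorthogonal expansion of $\Spc N_{\V\phi}$. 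Items 6 and 7 are then tautological, since $\|\Lop^{-1\ast}_{\V\phi}g\|_{\Spc X}$ and $\|\V p(\Proj_{\Spc N_{\V\phi}}\{g\})\|_2$ are the two arguments of that maximum; for item 7 one also uses that $\Proj_{\Spc N_{\V\phi}}\{g\}$ has zero operator-part, so its own composite norm equals its $\|\cdot\|_2$-component.

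The one step demanding real care --- the main obstacle --- is the transfer identity $\langle p_n,\Lop^\ast\varphi\rangle=\langle\Lop p_n,\varphi\rangle$. I must apply the adjoint relation in a pairing where both sides make sense: $\Lop^\ast\varphi\in L_{1,-\alpha}(\R^d)$ is tested against $p_n\in L_{\infty,\alpha}(\R^d)$, while $\Lop p_n$ is understood through the duality-extension $\Lop:L_{\infty,\alpha}(\R^d)\to\Spc S'(\R^d)$ licensed by Condition \ref{Item:Laststability}, tested against $\varphi\in\Spc S(\R^d)$. Once this compatibility of domains is verified, the vanishing is immediate and the rest of the proof is routine.
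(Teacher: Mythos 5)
Your proposal is correct and follows essentially the same route as the paper's own proof: the central step in both is the identity $\langle p_n,\Lop^\ast\varphi\rangle=\langle\Lop p_n,\varphi\rangle=0$, justified by the pairing of $\Lop^\ast\varphi\in L_{1,-\alpha}(\R^d)$ with $p_n\in L_{\infty,\alpha}(\R^d)$, from which the annihilation of $\Lop^\ast\big(\Spc S(\R^d)\big)$ by $\Proj_{\Spc N_{\V\phi}}$, the direct-sum decomposition, the action of $\Lop_{\V\phi}^{-1\ast}$, and the norm bounds all follow exactly as you describe. The paper phrases Item~5 as an isometric isomorphism between $(\Spc S(\R^d),\|\cdot\|_{\Spc X})\times\Spc N_{\V\phi}$ and $\Spc P_\Lop$ rather than verifying the norm axioms directly, but this is only a cosmetic difference.
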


Before moving to the proof, we observe that
every one of the operators $\Proj_{\Spc N_\V \phi}: L_{1,-\alpha}(\R^d) \toC \Spc N_\V \phi$, $\Lop_\V \phi^{-1\ast}: L_{1,-\alpha}(\R^d) \toC \Spc X$,
and $\Lop^\ast \circ \Lop_\V \phi^{-1\ast}: L_{1,-\alpha}(\R^d) \toC \Spc X\toC \Spc S'(\R^d)$ is well-defined on $g\in \Spc P_\Lop$ because $\Spc P_\Lop\subseteq L_{1,-\alpha}(\R^d)$
by construction. 

\begin{proof} From \eqref{Eq:pre-dual1}, $\Spc P_\Lop=\Lop^\ast\big(\Spc S(\R^d)\big)+\Spc N_\V \phi$.
Thanks to the hypotheses $p_n\in \Spc N_\Lop\subseteq L_{\infty,\alpha}(\R^d)=\big(L_{1,-\alpha}(\R^d)\big)'$ and $\psi=\Lop^\ast \varphi \in L_{1,-\alpha}(\R^d)$, we have that 
$\langle p_n,\psi \rangle=\langle p_n,\Lop^\ast \varphi \rangle=\langle \Lop p_n,\varphi \rangle=0$ or, equivalently, $\Proj_{\Spc N_\V \phi}\{\psi\}=0$ for all $\psi\in \Lop^\ast\big(\Spc S(\R^d)\big)$.
Likewise, due to the biorthogonality of $(\V \phi, \V p)$, $\Proj_{\Spc N_\V \phi}\{\phi\}=\phi$ for any $\phi \in \Spc N_\V \phi$.
In other words, for $g \in \Spc P_\Lop$, the condition $\Proj_{\Spc N_\V \phi}\{g\}=g$ is equivalent to $g \in \Spc N_\V \phi$; that is,
$\Lop^\ast\big(\Spc S(\R^d)\big)\cap\Spc N_\V \phi=\{0\}$, which establishes the direct-sum property (Item 4).

By applying the definition of $\Lop_\V \phi^{-1\ast}$ in \eqref{Eq:PseudoInverseDef}
and by invoking the spline-admissibility of $\Lop$, we then get that
$$\Lop_\V \phi^{-1\ast}\{\Op L^\ast \varphi + \phi\}=\Lop^{-1\ast}(\Identity-\Proj_{\Spc N_\V \phi})\{\psi + \phi\}=\Lop^{-1\ast}\psi=\Lop^{-1\ast}\Lop^\ast \varphi=\varphi.$$ 
for all $\varphi \in \Spc S(\R^d)$ and $\phi \in \Spc N_\V \phi$.
In doing so, we have actually shown that the 
map $\Spc S(\R^d) \times \Spc N_\V \phi  \to \Spc P_\Lop: (\varphi,\phi)  \mapsto g=\Op L^\ast \varphi + \phi$ is invertible
and that 
\begin{align}
\label{Eq:LpeudoInveseprop}
\Lop_\V \phi^{-1\ast}\psi=\Lop^{-1\ast}\psi \mbox{ for all }\psi=\Lop^\ast\varphi \in \Lop^\ast\big(\Spc S(\R^d)\big).
\end{align}
The properties in Items 1-3 then simply follow from the observation that
$(\Identity-\Proj_{\Spc N_\V \phi})\{\phi\}=0$ for any $\phi \in \Spc N_\V \phi$ which, in light of the previous identities, also yields
$\Lop_\V \phi^{-1\ast}(\Lop^\ast\varphi)=\Lop^{-1\ast}(\Lop^\ast\varphi)=\varphi$ 
for all $\varphi \in \Spc S(\R^d)$.

The property that $\Spc S(\R^d)\embedD \Spc X$ ensures that $\|\cdot\|_{\Spc X}$ is a {\em bona fide} norm over $\Spc S(\R^d)$. This allows us to equip $\Spc S(\R^d) \times \Spc N_\V \phi $ with the composite norm $(\varphi,\phi) \mapsto \max( \|\varphi\|_{\Spc X}, \|\V p(\phi)\|_2)$.
We then exploit the bijection between $\Spc S(\R^d)\times\Spc N_\V \phi$ and $\Lop^\ast\big(\Spc S(\R^d)\big)\oplus\Spc N_\V \phi$ to get
the expression of the norm given in \eqref{Eq:XLnorm}. 
In effect, this shows that the normed space $(\Spc S(\R^d),\|\cdot\|_{\Spc X}) \times \Spc N_\V \phi$
is isometrically isomorphic to $\Spc P_\Lop=\Lop^\ast\big(\Spc S(\R^d)\big) \oplus \Spc N_\Lop$ equipped with the $\|\cdot\|_{\Spc X_\Lop}$-norm, which is the desired result. 

To determine the corresponding norm inequalities (Items 5 and 6), we apply the direct-sum property to rewrite the norm of $g\in \Spc P_\Lop$ as
$$
\|g\|_{\Spc X_\Lop}=\max(\underbrace{\|(\Identity-\Proj_{\Spc N_\V \phi})\{g\}\|_{\Spc X_\Lop}}_{\|\Lop_\V \phi^{-1\ast}g\|_{\Spc X}}, \|\Proj_{\Spc N_\V \phi}\{g\}\|_{\Spc X_\Lop}),
$$
from which we deduce that $\|\Lop_\V \phi^{-1\ast}g\|_{\Spc X}\le \|g\|_{\Spc X_\Lop}$ and $\|\Proj_{\Spc N_\V \phi}\{g\}\|_{\Spc X_\Lop}\le \|g\|_{\Spc X_\Lop}$
for any $g \in \Spc P_\Lop$. These two  inequalities are sharp because
$\Proj_{\Spc N_\V \phi}$ (resp. $\Lop_\V \phi^{-1\ast}$) is an isometry over $\Spc N_\V \phi$ (resp.  
an isometry over $\Lop^\ast\big(\Spc S(\R^d)\big)$).
\end{proof}

We note that the projection property in Item 3 is equivalent to
$$\Lop^{\ast}\Lop_\V \phi^{-1\ast}\Lop^\ast\varphi=\Lop^\ast\varphi \quad \mbox{ for all }\quad \varphi \in \Spc S(\R^d),$$
which indicates that $\Lop_\V \phi^{-1\ast}$ is a generalized inverse of $\Lop^\ast$.

\section{Direct-sum topology of the native space}
\label{Sec:NativeSpace}
In this section, we reveal the Banach structure of the native space $\Spc X'_\Lop$ and 
of its pre-dual $\Spc X_\Lop$. We like to think of $\Spc X'_\Lop$ as the largest set of functions $f$ such that
$\|\Lop f\|_{\Spc X'}<\infty$ under the constraint of a finite-dimensional null space 
$\Spc N_\Lop=\{f \in \Spc X'_\Lop: \Lop\{f\}=0\}=\Spc N_{\V p}$. 
We also assume that $\Spc X$ is a Schwartz-Banach space with the cases of interest being
\begin{itemize}
\item $L_2(\R^d)$ equipped with the $\|\cdot\|_{L_2}$ norm and, more generally,
\item $L_p(\R^d)$ equipped with the $\|\cdot\|_{L_p}$ norm for $p\in[1,\infty)$;
\item $C_0(\R^d)$
, which can be specified as the closure of $\Spc S(\R^d)$ in the $\|\cdot\|_{\infty}$-norm.
\end{itemize}

\subsection{Pre-dual of the native space}
\begin{definition}
The pre-native space $\Spc X_\Lop$ is the completion of the pre-Banach space $\Spc P_\Lop$ of Definition \ref{Def:PreNative} for the ${\|\cdot\|_{\Spc X_\Lop}}$-norm defined by \eqref {Eq:XLnorm}.
\end{definition}
We now identify this space and prove that it is isometrically isomorphic to
$\Spc X \times \Spc N_\V \phi$,
which requires the use of Cauchy sequences to extend the properties of $\Spc P_\Lop$ in Proposition \ref{Prop:PreBanach}. We recall that the two normed spaces that underly the specification of $\Spc P_\Lop=\Spc W \oplus \Spc N_\V \phi$ in Proposition \ref{Prop:PreBanach}
are $\Spc W=(\Lop^\ast\big(\Spc S(\R^d)\big), \|\cdot\|_{\Spc W})$ with
$$
\|\psi\|_{\Spc W}= \|\Lop_\V\phi ^{-1\ast}\psi\|_{\Spc X},
$$
where $\Lop_\V\phi ^{-1\ast}: \Spc P_\Lop=(\Spc W \oplus \Spc N_\V \phi) \to \Spc X$ and
$(\Spc N_\V \phi,\|\cdot\|_{\Spc N_\V \phi})$ with
$\|\phi\|_{\Spc N_\V \phi}= \|\V p(\phi)\|_2$.


\begin{theorem}[Pre-dual of the native space]
\label{Theo:pre-dual2}
Under the admissibility and compatibility hypotheses of Definition \ref{Def:Hypotheses}, 
the completion of $(\Spc P_\Lop,{\|\cdot\|_{\Spc X_\Lop}})$ is the Banach space 
\begin{align*}
\Spc X_\Lop&=\Lop^\ast(\Spc X) \oplus \Spc N_\V \phi,
\end{align*}
which is itself isometrically
isomorphic to $\Spc X \times \Spc N_\V \phi$. Correspondingly,
the bounded operators $\Proj_{\Spc N_\V \phi}:\Spc P_\Lop \to \Spc N_\V \phi$
and $\Lop_\V \phi^{-1\ast} 
: \Spc P_\Lop \to \Spc X$ 
%
of Proposition \ref{Prop:PreBanach} have unique continuous extensions
$\Proj_{\Spc N_\V \phi}:\Spc X_\Lop \toC \Spc N_\V \phi$
and $\Lop_\V \phi^{-1\ast}: \Spc X_\Lop \toC \Spc X$
with the following properties:
\begin{enumerate}
\item Null space of $\Proj_{\Spc N_\V \phi}$: $\Spc U\eqdef \Lop^\ast(\Spc X)=\{g \in \Spc X_\Lop: \Proj_{\Spc N_\V \phi}\{g\}=0\}$.
\item Null space of $\Lop_\V \phi^{-1\ast}$: $\Spc N_\V \phi\eqdef {\rm span}\{\phi_n\}_{n=1}^{N_0}=\{g \in \Spc X_\Lop: \Lop_\V \phi^{-1\ast}g=0\}$.
\item Left inverse of $\Lop^\ast$: $\Lop_\V \phi^{-1\ast}\Lop^\ast v=v$  for any $v \in \Spc X$.
\item Pseudo-right-inverse: $\Lop^\ast \Lop_\V \phi^{-1\ast}g=(\Identity- \Proj_{\Spc N_\V \phi})\{g\}$ for any $g \in \Spc X_\Lop,$
\end{enumerate}
where $\Proj_{\Spc N_\V \phi}: g \mapsto \sum_{n=1}^{N_0} \langle p_n,g\rangle \phi_n$
with the property that $\langle p_n,u\rangle=0$ for all $u \in \Spc U$ and $\langle p_m,\phi_n\rangle=\delta[m-n]$.
Moreover, we have the hierarchy of continuous and dense embeddings $$\Spc S(\R^d) \embedD L_{1,-\alpha}(\R^d) \embedD \Spc X_\Lop \embedD \Spc S'(\R^d).$$

\end{theorem}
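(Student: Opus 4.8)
The plan is to obtain the bulk of the statement almost for free from Proposition \ref{Prop:PreBanach} by invoking the functoriality of completion, and then to do the genuine work of realizing the abstract completion concretely inside $\Spc S'(\R^d)$. Proposition \ref{Prop:PreBanach} already exhibits an isometric isomorphism $(\Spc P_\Lop,\|\cdot\|_{\Spc X_\Lop}) \to (\Spc S(\R^d),\|\cdot\|_{\Spc X}) \times \Spc N_\V \phi$ via $g \mapsto (\Lop_\V \phi^{-1\ast}g,\Proj_{\Spc N_\V \phi}\{g\})$. Since an isometry of normed spaces extends uniquely to an isometry of their completions, and the completion of a finite direct product is the product of completions, I would identify the completion of the first factor with $\Spc X$ (this is exactly the Schwartz-Banach property of Definition \ref{Def:SchwartzBanach}) and note that the finite-dimensional factor $\Spc N_\V \phi$ is already complete. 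This yields the abstract isometric isomorphism $\Spc X_\Lop \cong \Spc X \times \Spc N_\V \phi$, shows that $\Spc X_\Lop$ is Banach, and produces the unique continuous extensions of $\Proj_{\Spc N_\V \phi}$ and $\Lop_\V \phi^{-1\ast}$ onto $\Spc N_\V \phi$ and $\Spc X$.

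The hard part will be to pin this abstract completion down as the concrete space $\Lop^\ast(\Spc X)\oplus\Spc N_\V \phi \subseteq \Spc S'(\R^d)$, which is what makes the embedding $\Spc X_\Lop \embedC \Spc S'(\R^d)$ meaningful and injective. First I would verify that the inclusion $\Spc P_\Lop \hookrightarrow \Spc S'(\R^d)$ is $\|\cdot\|_{\Spc X_\Lop}$-continuous: if $\varphi_i \to 0$ in $\Spc X$ and $\phi_i \to 0$ in $\Spc N_\V \phi$, then $\Lop^\ast\varphi_i \to 0$ in $\Spc S'(\R^d)$ by continuity of the extended adjoint $\Lop^\ast:\Spc X \toC \Spc S'(\R^d)$ (Condition 3 of Definition \ref{Def:Hypotheses}), while $\phi_i \to 0$ in $\Spc N_\V \phi \subset \Spc S(\R^d)$. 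By Theorem \ref{Theo:BLT} this extends to a continuous map $\Spc X_\Lop \to \Spc S'(\R^d)$ sending a Cauchy class $(g_i=\Lop^\ast\varphi_i+\phi_i)$ to $\Lop^\ast v + \phi$ with $v=\lim\varphi_i\in\Spc X$ and $\phi=\lim\phi_i\in\Spc N_\V \phi$. To prove this map injective --- the \emph{crux} --- I would apply $\Proj_{\Spc N_\V \phi}$ to the limit: since $\Proj_{\Spc N_\V \phi}\{\Lop^\ast v\}=0$ for every $v\in\Spc X$ (this holds on the dense subspace $\Spc S(\R^d)$ and extends by continuity), one gets $\Proj_{\Spc N_\V \phi}(\Lop^\ast v+\phi)=\phi$, so $\Lop^\ast v+\phi=0$ forces $\phi=0$ and then $\Lop^\ast v=0$, whence $v=0$ by injectivity of $\Lop^\ast$ on $\Spc X$. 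This simultaneously yields uniqueness of the decomposition and the identification $\Spc X_\Lop=\Lop^\ast(\Spc X)\oplus\Spc N_\V \phi$, the sum being topological because $\Proj_{\Spc N_\V \phi}$ and $\Identity-\Proj_{\Spc N_\V \phi}$ are bounded.

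With the concrete realization in hand, Properties 1--4 follow by extending the corresponding identities of Proposition \ref{Prop:PreBanach} from the dense subspace $\Spc P_\Lop$ by continuity. Items 1 and 2 read the kernels of the two extended operators directly off the direct-sum decomposition, while the left-inverse identity $\Lop_\V \phi^{-1\ast}\Lop^\ast v=v$ (Item 3) and the pseudo-right-inverse identity $\Lop^\ast\Lop_\V \phi^{-1\ast}g=(\Identity-\Proj_{\Spc N_\V \phi})\{g\}$ (Item 4) are obtained by passing to the limit in the relations already known on $\Spc S(\R^d)$ and $\Spc P_\Lop$, using density of $\Spc S(\R^d)$ in $\Spc X$ and of $\Spc P_\Lop$ in $\Spc X_\Lop$.

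It remains to establish the embedding hierarchy. The outer embedding $\Spc S(\R^d)\embedD L_{1,-\alpha}(\R^d)$ is standard (Appendix A), and $\Spc X_\Lop \embedD \Spc S'(\R^d)$ follows from the continuous injection just built together with density, since $\Spc S(\R^d)\subseteq\Spc X_\Lop$ is dense in $\Spc S'(\R^d)$ by Proposition \ref{Prop:Sdense}. For the key middle embedding $L_{1,-\alpha}(\R^d)\embedD\Spc X_\Lop$, continuity comes from the bound $\|g\|_{\Spc X_\Lop}\le C\|g\|_{1,-\alpha}$ valid on all of $L_{1,-\alpha}(\R^d)$, because $\Lop_\V \phi^{-1\ast}$ and $\Proj_{\Spc N_\V \phi}$ are already continuous $L_{1,-\alpha}(\R^d)\toC\Spc X$ and $L_{1,-\alpha}(\R^d)\toC\Spc N_\V \phi$ (Condition 4 and \eqref{Eq:ProjectorBound}), while injectivity follows from the compatibility of both embeddings into $\Spc S'(\R^d)$. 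For density I would first show $\Spc P_\Lop$ is dense in $L_{1,-\alpha}(\R^d)$: the annihilator of $\Lop^\ast(\Spc S(\R^d))$ in $L_{\infty,\alpha}(\R^d)$ is exactly $\Spc N_\Lop=\Spc N_\V p$, so the bipolar theorem gives $\overline{\Lop^\ast(\Spc S(\R^d))}=\{g:\langle p_n,g\rangle=0\ \forall n\}$, and adding the transverse space $\Spc N_\V \phi$ (with $\langle p_m,\phi_n\rangle=\delta_{m,n}$) recovers $L_{1,-\alpha}(\R^d)$ through $g=(g-\Proj_{\Spc N_\V \phi}g)+\Proj_{\Spc N_\V \phi}g$. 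Since $\Spc P_\Lop\subseteq L_{1,-\alpha}(\R^d)\subseteq\Spc X_\Lop$ with $\Spc P_\Lop$ dense in $\Spc X_\Lop$, denseness of $L_{1,-\alpha}(\R^d)$ in $\Spc X_\Lop$ is then immediate.
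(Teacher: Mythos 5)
Your proposal follows essentially the same route as the paper: completion of the direct sum as the direct sum of completions (isometric isomorphism with $\Spc X \times \Spc N_\V \phi$), concrete realization inside $\Spc S'(\R^d)$ via the injectivity of $\Lop^\ast$ on $\Spc X$, transfer of the identities of Proposition \ref{Prop:PreBanach} by continuity, and the embedding chain driven by the $\Spc X$-stability hypothesis. Your treatment of injectivity of the extension map into $\Spc S'(\R^d)$ is if anything more explicit than the paper's, and the bipolar-theorem detour for density is correct but unnecessary, since $\Spc P_\Lop \subseteq L_{1,-\alpha}(\R^d) \subseteq \Spc X_\Lop$ with $\Spc P_\Lop$ dense in $\Spc X_\Lop$ already settles the matter (as your last sentence notes).

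One step is under-argued: in the middle embedding you invoke the bound $\|g\|_{\Spc X_\Lop}\le C\|g\|_{1,-\alpha}$ for all $g\in L_{1,-\alpha}(\R^d)$, but this presupposes the set inclusion $L_{1,-\alpha}(\R^d)\subseteq \Spc X_\Lop=\Lop^\ast(\Spc X)\oplus\Spc N_\V \phi$, which is not automatic. The paper makes this explicit: for $\psi\in L_{1,-\alpha}(\R^d)$ with $\V p(\psi)=\V 0$, one sets $v=\Lop_{\V \phi}^{-1\ast}\psi\in\Spc X$ (this is exactly where $\Spc X$-stability is used) and then verifies $\Lop^\ast v=\psi$ from the extended pseudo-right-inverse identity of Item 4 together with $\Proj_{\Spc N_\V \phi}\{\psi\}=0$, so that $\psi\in\Lop^\ast(\Spc X)=\Spc U$; the general $g$ then splits as $(g-\Proj_{\Spc N_\V \phi}\{g\})+\Proj_{\Spc N_\V \phi}\{g\}\in\Spc U\oplus\Spc N_\V \phi$. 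You have all the ingredients for this (you prove Item 4 and you cite the continuity of $\Lop_{\V \phi}^{-1\ast}$ on $L_{1,-\alpha}(\R^d)$), so the fix is a matter of writing out this membership argument before stating the norm bound; but as written the inclusion is asserted rather than proved.
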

\begin{proof} 
By definition, we have that $\Spc X_\Lop=\overline{\Spc P}_\Lop$
which, in view of Theorem \ref{Theo:DirectSum} in Appendix B, is itself decomposable as $\overline{\Spc P}_\Lop=\overline{\Spc W} \oplus 
\overline{\Spc N}_\V \phi=\overline{\Spc W} \oplus \Spc N_\V \phi$ (because $\Spc N_\V \phi$ is finite-dimensional).

\item {\em (i) $\overline{\Spc W}=\Spc U=\Lop^\ast(\Spc X)=\{u=\Lop^\ast v: v \in \Spc X\}$ equipped with the topology inherited from $\Spc X$.}
\\
Since the map $\Lop^\ast: \Spc X \toC \Spc S'(\R^d)$ is injective, we have an isometric isomorphism
between the Banach space $(\Spc X,\|\cdot\|_\Spc X)$ and
$\Spc U$, which is itself a Banach space equipped with the norm $\|u\|_{\Spc U}=\|v\|_\Spc X$ where
$v$ is the unique element in $\Spc X$ such that $u=\Lop^\ast v$. In particular, for any $\psi \in \Spc W \subseteq \Spc U$, we can use 
Property 2 of Proposition \ref{Prop:PreBanach} (invertibility) 
to show that 
$$
\|\psi\|_{\Spc U}=\|\Lop^{-1\ast}_\V \phi\psi\|_{\Spc X}=\|\psi\|_{\Spc W}.
$$
Moreover, because the spaces
$(\Spc S(\R^d),\|\cdot\|_{\Spc X})$ and $(\Spc W,\|\cdot\|_{\Spc U})$ are isometric, 
$\Spc W$ is dense in $\Spc U$: for any $u=\Lop^\ast v \in \Spc U$ and $\epsilon>0$, there exists some
$\psi_\epsilon \in \Spc W$ such that $\|u-\psi_\epsilon\|_{\Spc U}\le \epsilon$. Indeed, the denseness of $\Spc S(\R^d)$ in $\Spc X$ implies the existence of
$\varphi_\epsilon\in \Spc S(\R^d)$ such that $\|v-\varphi_\epsilon\|_{\Spc X}=\| \Lop^\ast v-\Lop^\ast \varphi_\epsilon\|_{\Spc U}\le \epsilon$
so that it suffices to take $\psi_\epsilon=\Lop^\ast \varphi_\epsilon \in \Spc W$.
Since $\Spc U$ is complete and admits $\Spc W$ as a dense subset, it can be identified
as the completion of $\Spc W$ equipped with the $\|\cdot\|_{\Spc W}=\|\cdot\|_{\Spc U}$-norm.

\item {\em (ii) Extension of operators and functionals}\\
For clarity, 
we mark the extended operators mentioned in the theorem with a tilde.
Specifically, the application of Theorem \ref{Theo:BLT} with $\Spc Z=\Spc P_\Lop$ and $\Spc Y=\Spc X, \Spc N_\V \phi$, and $\R$ allows us to specify
the unique extensions
\begin{itemize}
\item $
\widetilde{\Lop_\V\phi ^{-1\ast}}: \overline{\Spc P}_\Lop \toC \Spc X$ with $\|\widetilde{\Lop_\V\phi ^{-1\ast}}\|= 1$
\item $
\widetilde{\Proj}_{\Spc N_\V \phi}: \overline{\Spc P}_\Lop \toC \Spc N_\V \phi$ with $\|\widetilde{\Proj}_{\Spc N_\V \phi}\|= 1$
\item $\widetilde{p}_n: \overline{\Spc P}_\Lop \to \R$ with $\|\widetilde{p}_n\|\le 1$.

\end{itemize}
The relevant bounds for the two first instances are directly deducible from Properties 6 and 7 in Proposition \ref{Prop:PreBanach}, while
the explicit definitions of these extensions are given in \eqref{E:PseudoInvExt} and \eqref{Eq:ExtendProj2}.
 As for the functionals $p_n: \Spc P_\Lop \to \R$ 
for $n=1,\dots,N_0$, 
we observe that
$$
|\langle p_n,\Lop^\ast\varphi + \phi\rangle|=|\langle p_n,\phi\rangle| 
\le \max( \|\varphi\|_{\Spc X} , \left(\sum_{m=1}^{N_0} |\langle p_m,\phi\rangle|^2\right)^\frac{1}{2}),
$$
for any $\varphi\in \Spc S(\R^d)$ and
$\phi
\in \Spc N_{\V \phi}$, which yields the supporting bound
$$|\langle p_n,g\rangle|\le \|g\|_{\Spc X_\Lop}\quad\mbox{for all}\quad g \in \Spc P_\Lop.$$



\item {\em (iii) Derivation of Properties 1-4 by continuity}\\
In line with the argumentation in Item ({\em i}), for any $u \in \Spc U=\overline{\Spc W}$, we have that
\begin{align}
\label{E:PseudoInvExt}\|u\|_{\Spc U}&=\|\widetilde{\Lop_{\V \phi}^{-1\ast}}u\|_{\Spc X} \quad  \mbox{ with } \quad 
\widetilde{\Lop_{\V \phi}^{-1\ast}}u\eqdef\lim_{i\to\infty} \Lop_{\V \phi}^{-1\ast}\psi_i,
\end{align}
where $(\psi_i)$ is any Cauchy sequence in $\Spc W$ such that $u=\lim_{i \to\infty} \psi_i \in \overline{\Spc W}=\Spc U$.
In fact, the underlying isometric isomorphism ensures that
a Cauchy sequence $(\psi_i)$ in $\Spc W$ maps to a corresponding sequence
 $(\varphi_i=\Lop_{\V \phi}^{-1\ast}\psi_i)$ that is Cauchy in $(\Spc S(\R^d),\|\cdot\|_{\Spc X})$, and vice versa
by taking $\psi_i=\Lop^\ast \varphi_i$. In the limit, we have that
$v=\lim_{i\to\infty} \varphi_i=\Lop_{\V \phi}^{-1\ast}\{\lim_{i\to\infty} \psi_i\}=\Lop_{\V \phi}^{-1\ast}u \in \Spc X$ and 
$u=\lim_{i\to\infty} \psi_i=\Lop^\ast\{\lim_{i\to\infty}\varphi_i\}=\Lop^\ast v \in \Spc U$. The last characterization also yields that
\begin{align}
\label{Eq:pnextended}
\langle \widetilde{p}_n,u\rangle=\lim_{i\to\infty} \langle p_n,\psi_i\rangle=\lim_{i\to\infty} \langle \Lop^\ast p_n,\varphi_i\rangle=0
\end{align}
for all $u \in \Spc U$, which is consistent with the property that $\widetilde{\Proj}_{\Spc N_\V \phi}u=0$.
The conclusion is that $ \widetilde{\V p}(u)=\V 0$ for all $u \in \Spc U$ so that the extended projector $\widetilde{\Proj}_{\Spc N_\V \phi}: \Spc X_\Lop \toC \Spc N_\V \phi$
retains
the same functional form as before as
\begin{align}
\label{Eq:ExtendProj2}
\widetilde{\Proj}_{\Spc N_\V \phi}\{g\}= \sum_{n=1}^{N_0} \langle \widetilde{p}_n, g\rangle \phi_n.
\end{align}
Due to the isometric isomorphism between $\Spc U$ and $\Spc X$, it is then also acceptable to decompose 
the extended pseudo-inverse as $\widetilde{\Lop_{\V \phi}^{-1\ast}}=\Lop^{\ast-1} (\Identity- \widetilde{\Proj}_{\Spc N_\V \phi})$,
where $\Lop^{\ast-1}$ denotes the formal inverse of $\Lop^\ast$ from $\Spc U\to \Spc X$.
By plugging in the relevant Cauchy sequences and by invoking \eqref{E:PseudoInvExt} and \eqref{Eq:pnextended}, 
it is then possible to seamlessly transfer Properties
1-3 of Proposition \ref{Prop:PreBanach} to the completed counterpart of these spaces, which yields Items 1-4.

\item {\em (iv) Embeddings}\\
We simplify the notation by setting $\Spc Y=L_{1,-\alpha}(\R^d)$ and consider the decomposition $\Spc Y=\Spc Y_{\V p^\perp} \oplus \Spc N_\V \phi$, where
$\Spc Y_{\V p^\perp}\eqdef \{\psi \in \Spc Y: \V p(\psi)=\V 0 \}$.
To prove that $\Spc Y_{\V p^\perp}\subseteq \Spc U$, we first invoke the
continuity of $\Lop_{\V \phi}^{-1\ast}: \Spc Y \toC \Spc X$, which ensures
that $v=\Lop_{\V \phi}^{-1\ast}\psi \in \Spc X$ for all $\psi \in \Spc Y_{\V p^\perp}$. We then use Property 4 (or the injectivity of $\Lop^\ast$) to get that
$\Lop^\ast v=\Lop^\ast\Lop_{\V \phi}^{-1\ast}\psi=\psi$
(because $\Proj_{\Spc N_\V \phi}\{\psi\}=0$ for all $\psi \in \Spc Y_{\V p^\perp}$), which shows that
$\psi \in \Lop^\ast(\Spc X)=\Spc U$.
This, together with the continuity of $\Lop^{\ast}: \Spc X \toC \Spc U$, ensures the continuity of the inclusion/identity map
$\Op I=\Lop^\ast \circ \Lop_{\V \phi}^{-1\ast}: \Spc Y_{\V p^\perp} \toC \Spc X \toC \Spc U$, which is equivalent to 
 $\Spc Y_{\V p^\perp} \embedC \Spc U$. Consequently, we have that $\Spc Y=(\Spc Y_{\V p^\perp} \oplus \Spc N_\V \phi) 
 \embedC
 (\Spc U \oplus \Spc N_\V \phi)=\Spc X_\Lop$. Moreover, since $\Spc P_\Lop \subseteq \Spc Y=L_{1,-\alpha}(\R^d)$ (see Item 5 in Proposition \ref{Prop:PreBanach})
 and $\Spc P_\Lop$ is a dense subspace of
 $\Spc X_\Lop=\overline{\Spc P}_\Lop$ by construction, we readily deduce that the embedding
 $\Spc Y \embedC \Spc X_\Lop$ is dense. 
Likewise, since $\Spc S(\R^d)\embedD \Spc Y$, we get that $\Spc S(\R^d) \embedD  \Spc X_\Lop$ by transitivity.
Finally, the continuity of $\Lop^\ast: \Spc X \toC \Spc S'(\R^d)$ implies that $\Spc U=\Lop^\ast(\Spc X) \embedC \Spc S'(\R^d)$ which, together with
 $\Spc N_\V \phi \embedC\Spc S'(\R^d)$, yields that $\Spc X_\Lop \embedC \Spc S'(\R^d)$.
Here too, the embedding is dense due to the property that $\Spc S(\R^d) \embedD \Spc S'(\R^d)$ (see Propositions \ref{Prop:Sdense} and \ref{Prop:Hierarchy} in Appendix A).

\item {\em (v) Identification of $\widetilde{p}_n=p_n$, $\widetilde{\Proj}_{\Spc N_\V \phi}=\Proj_{\Spc N_\V \phi}$ and $\widetilde{\Lop_{\V \phi}^{-1\ast}}={\Lop_{\V \phi}^{-1\ast}}$}\\
So far, we have distinguished the extended operators and functionals from the original ones whose initial domain was restricted to $L_{1,-\alpha}(\R^d)$. 
We now invoke the Schwartz-Banach property of both $\Spc X_\Lop$ (see Item ({\em iv}) and Definition \ref{Def:SchwartzBanach}) and $L_{1,-\alpha}(\R^d)$ 
to argue that there is a common underlying 
characterization (see Proposition \ref{Prop:Concrete}) 
that is applicable to both instances. 
Consequently, it is acceptable to 
 write that $\tilde p_n: g \mapsto \langle p_n,g\rangle$, which then gives a concrete and rigorous interpretation of the extended functionals and, by the same token, the extended projector \eqref{Eq:ExtendProj2}.
Likewise, from now on, we 
denote both the original and extended pseudo-inverse operators by 
$\Lop_{\V \phi}^{-1\ast}$, under the understanding that their underlying Schwartz kernel is the same.

 \end{proof}


An important observation is that the $\Spc X$-stability hypothesis (\ie
$\Lop_\V \phi^{-1\ast}: L_{1,-\alpha}(\R^d) \toC \Spc X$) is only required for the proof of the embeddings (last statement of the theorem). This is a fundamental
point as it ensures that $\Spc X_\Lop$ is a Schwartz-Banach space, while it also yields a concrete interpretation of the underlying operators. Last but not least, it guarantees that the actual native space
$\Spc X'_\Lop$ is a proper Banach subspace of $\Spc S'(\R^d)$ 
(by Proposition \ref{Prop:SchwartzBanach}).

Implicit in the statement of Theorem \ref{Theo:pre-dual2} (and explicit in the proof) are the following
fundamental properties of $\Spc U$: the primary part of $\Spc X_\Lop$ ``perpendicular'' to $\Spc N_\V \phi$.
\begin{corollary}
\label{Corol:U}
The space $\Spc U=\{u=\Lop^\ast v: v \in \Spc X\}$ in Theorem \ref{Theo:pre-dual2} has the following properties:
\begin{enumerate}
\item For any $u \in \Spc U$, $\Lop_\V \phi^{-1\ast}u=\Lop^{-1\ast}u \in \Spc X$ and $\Lop^\ast\Lop^{-1\ast}u=\Lop^\ast\Lop_\V \phi^{-1\ast}u=u$.
\item $\Spc U$ is a Banach space equipped with the norm $\|u\|_{\Spc U}=\|\Lop_{\V \phi}^{-1\ast}u\|_{\Spc X}$.

\item $\Spc U=\Lop^\ast(\Spc X)$ is isometrically isomorphic to $\Spc X$: For any $u \in \Spc U$ (resp. for any $v \in \Spc X$), there exists a unique element
$v=\Lop^{-1\ast}_{\V \phi}u \in \Spc X$ 
(resp. $u=\Lop^\ast vÊ \in \Spc U$) such that $\|u\|_{\Spc U}=\|v\|_{\Spc X}$.
\item $\Lop^\ast: \Spc X \toIso \Spc U$ (isometry).
\item $\Lop^{-1\ast}=\Lop_\V \phi^{-1\ast}: \Spc U \toIso \Spc X$  (isometry).
\item For any $(p,u)\in (\Spc N_\V p \times \Spc U), \langle p,u\rangle
=0$. 
\item $\Spc U$ is the completion of $\Spc S_{\V p^\perp}(\R^d)=\{\psi \in \Spc S(\R^d): \V p(\psi)=\V 0\}$ in the $\|\cdot\|_{\Spc U}$-norm.
\end{enumerate}
\end{corollary}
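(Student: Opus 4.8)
The plan is to obtain Items 1--5 essentially by rereading part (\emph{i}) of the proof of Theorem \ref{Theo:pre-dual2}, where the isometric isomorphism $\Lop^\ast:(\Spc X,\|\cdot\|_\Spc X)\toIso(\Spc U,\|\cdot\|_\Spc U)$ is already established, with $\|u\|_\Spc U=\|v\|_\Spc X$ for the unique $v\in\Spc X$ satisfying $u=\Lop^\ast v$ (unicity coming from the injectivity of $\Lop^\ast$, Condition 3 of Definition \ref{Def:Hypotheses}). Completeness of $\Spc X$ transfers to $\Spc U$, giving Items 2--4. For the operator identities in Items 1 and 5, I would use that every $u\in\Spc U$ lies in the null space of $\Proj_{\Spc N_\V \phi}$ (Item 1 of Theorem \ref{Theo:pre-dual2}), so that $\V p(u)=\V 0$ and hence $\Lop_\V \phi^{-1\ast}u=\Lop^{-1\ast}(\Identity-\Proj_{\Spc N_\V \phi})u=\Lop^{-1\ast}u=v$. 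Feeding this into the pseudo-right-inverse identity (Property 4 of the theorem) yields $\Lop^\ast\Lop_\V \phi^{-1\ast}u=(\Identity-\Proj_{\Spc N_\V \phi})u=u$, which completes Item 1 and shows that $\Lop_\V \phi^{-1\ast}$ inverts $\Lop^\ast$ isometrically on $\Spc U$ (Item 5).

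Item 6 is already contained in \eqref{Eq:pnextended}: writing $u=\lim_i\Lop^\ast\varphi_i$ and using $\langle p_n,\Lop^\ast\varphi_i\rangle=\langle\Lop p_n,\varphi_i\rangle=0$ (because $\Lop p_n=0$), one gets $\langle p_n,u\rangle=0$ for each basis functional, and then $\langle p,u\rangle=0$ for arbitrary $p\in\Spc N_\V p$ by linearity.

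The genuinely new statement is Item 7, and this is where I expect the only real work. First I would record the inclusion $\Spc S_{\V p^\perp}(\R^d)\subseteq\Spc U$: since $\Spc S(\R^d)\subseteq L_{1,-\alpha}(\R^d)$ and $\V p(\psi)=\V 0$, such $\psi$ lie in $\Spc Y_{\V p^\perp}$, which part (\emph{iv}) of the proof of Theorem \ref{Theo:pre-dual2} embeds in $\Spc U$; moreover the $\Spc X_\Lop$-norm coincides with the $\Spc U$-norm on $\Spc U$, because $\V p(u)=\V 0$ annihilates the second component of \eqref{Eq:XLnorm}. For density, given $u\in\Spc U$ I would invoke the dense embedding $\Spc S(\R^d)\embedD\Spc X_\Lop$ to pick $\eta_i\in\Spc S(\R^d)$ with $\eta_i\to u$ in $\Spc X_\Lop$, and correct each approximant by setting
\begin{align*}
\psi_i\eqdef(\Identity-\Proj_{\Spc N_\V \phi})\{\eta_i\}=\eta_i-\sum_{n=1}^{N_0}\langle p_n,\eta_i\rangle\phi_n.
\end{align*}
Since the biorthogonal system is universal, the $\phi_n$ are Schwartz functions, so $\psi_i\in\Spc S(\R^d)$, and biorthogonality $\langle p_m,\phi_n\rangle=\delta[m-n]$ forces $\langle p_m,\psi_i\rangle=0$; hence $\psi_i\in\Spc S_{\V p^\perp}(\R^d)$. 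Continuity of $\Proj_{\Spc N_\V \phi}$ on $\Spc X_\Lop$ together with $\Proj_{\Spc N_\V \phi}u=0$ then gives $\psi_i\to(\Identity-\Proj_{\Spc N_\V \phi})u=u$ in $\Spc X_\Lop$, that is, in $\Spc U$, exhibiting $\Spc S_{\V p^\perp}(\R^d)$ as a dense subspace of the complete space $\Spc U$.

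The main obstacle is precisely the subtlety in Item 7 that the obvious dense subset $\Spc W=\Lop^\ast(\Spc S(\R^d))$ used in part (\emph{i}) does \emph{not} consist of Schwartz functions in general, so it cannot be reused directly to identify the completion of $\Spc S_{\V p^\perp}(\R^d)$. The device that circumvents this is the universality of $(\V \phi,\V p)$: projecting out the null-space part of a Schwartz approximant with the Schwartz functions $\phi_n$ keeps the corrected approximant inside $\Spc S(\R^d)$ while driving it into $\Spc S_{\V p^\perp}(\R^d)$, and the already-established continuity of $\Proj_{\Spc N_\V \phi}$ on $\Spc X_\Lop$ makes the correction asymptotically negligible.
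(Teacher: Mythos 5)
Your proposal is correct. For Items 1--6 it follows essentially the paper's own route: the isometric isomorphism $\Lop^\ast:\Spc X\toIso\Spc U$ already set up in part (\emph{i}) of the proof of Theorem \ref{Theo:pre-dual2}, the observation that $(\Identity-\Proj_{\Spc N_\V \phi})\{u\}=u$ on $\Spc U$ makes $\Lop_{\V \phi}^{-1\ast}$ and $\Lop^{-1\ast}$ coincide there, and for Item 6 the annihilation $\langle p,u\rangle=\langle p,\Lop^\ast\Lop_{\V \phi}^{-1\ast}u\rangle=\langle\Lop p,\Lop_{\V \phi}^{-1\ast}u\rangle=0$ (you reach the same conclusion via the limit identity \eqref{Eq:pnextended}, which is an equivalent reading of the same computation). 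The only place where you genuinely diverge is Item 7. The paper records the splitting $\Spc S(\R^d)=\Spc S_{\V p^\perp}(\R^d)\oplus\Spc N_\V \phi$ with the correspondingly split norm and then invokes the abstract completion-of-a-direct-sum result (Theorem \ref{Theo:DirectSum}) together with $\overline{\Spc S(\R^d)}=\Spc X_\Lop=\Spc U\oplus\Spc N_\V \phi$ and the closedness of the finite-dimensional factor to conclude $\overline{\Spc S_{\V p^\perp}(\R^d)}=\Spc U$. You instead run the density argument by hand: approximate $u\in\Spc U$ by $\eta_i\in\Spc S(\R^d)$ in the $\Spc X_\Lop$-norm and correct each approximant by $(\Identity-\Proj_{\Spc N_\V \phi})$, which stays in $\Spc S(\R^d)$ precisely because the $\phi_n$ are Schwartz functions, lands in $\Spc S_{\V p^\perp}(\R^d)$ by biorthogonality, and converges to $u$ by continuity of the projector and $\Proj_{\Spc N_\V \phi}\{u\}=0$. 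The two arguments rest on exactly the same ingredients (universality, $\Spc S(\R^d)\embedD\Spc X_\Lop$, boundedness of $\Proj_{\Spc N_\V \phi}$); yours is the concrete unpacking of what Theorem \ref{Theo:DirectSum} delivers abstractly, and avoids the appendix machinery at the cost of an explicit sequence argument, while the paper's version makes the direct-sum bookkeeping reusable elsewhere. I see no gaps.
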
 
\begin{proof}
Items 1-5 are re-statements/re-interpretations of the invertibility Properties 3 and 4 in Theorem  \ref{Theo:pre-dual2}. The key is that 
$(\Identity-\Proj_{\Spc N_\V \phi})\{u\}=
u$ for all $u \in \Spc U$, which then makes the presence of this operator redundant.
Item 6 results from the simple manipulation
$$ \langle p,u\rangle=\langle p, \Lop^\ast \Lop_{\V \phi}^{-1\ast}u\rangle=\langle \underbrace{\Lop p}_{=0},\Lop_{\V \phi}^{-1\ast}u\rangle
=0,$$
which is legitimate because $u \in \Spc X_\Lop=\Spc U \oplus \Spc N_\V \phi$ and $p \in \Spc N_\V p=\Spc N'_\V \phi \subset \Spc X'_\Lop=(\Spc U \oplus \Spc N_\V \phi)'$.
As for Item 7, we consider
the direct-sum decomposition $\Spc S(\R^d)= \Spc S_{\V p^\perp}(\R^d) \oplus \Spc N_\V \phi$, which is valid whenever $\Spc N_\V \phi \subset \Spc S(\R^d)$ (universality assumption). We then observe that $$(\Spc S(\R^d),\|\cdot\|_{\Spc X_\Lop})=(\Spc S_{\V p^\perp}(\R^d),\|\cdot\|_{\Spc U}) \oplus (\Spc N_\V \phi,\|\cdot\|_{\Spc N_\V \phi}).$$
This equality holds because $\|\varphi\|_{\Spc X_\Lop}=\|\psi\|_{\Spc U}+\|\V p(\phi)\|_2$ for any $\varphi=\psi+\phi \in \Spc S(\R^d) \subseteq \Spc X_\Lop$
with $(\psi,\phi) \in \big(\Spc S_{\V p^\perp}(\R^d) \times \Spc N_\V \phi\big)\subseteq (\Spc U \times \Spc N_\V \phi)$.
The denseness of the embedding $\Spc S(\R^d) \embedC \Spc X_\Lop$ from Theorem \ref{Theo:pre-dual2} implies that
$\overline{(\Spc S(\R^d),\|\cdot\|_{\Spc X_\Lop})}=\Spc X_\Lop=\Spc U \oplus \Spc N_{\V \phi}$.
Finally, by invoking Theorem \ref{Theo:DirectSum} and the property that $\overline{(\Spc N_\V \phi,\|\cdot\|_{\Spc N_\V \phi})}=\Spc N_\V \phi$ (because $\Spc N_{\V \phi}$ is finite-dimensional),
we deduce that $\overline{(\Spc S_{\V p^\perp}(\R^d),\|\cdot\|_{\Spc X_\Lop})}=\Spc U$, which is equivalent to
$\Spc S_{\V p^\perp}(\R^d) \embedD \Spc U$.
\end{proof}

Item 1 in Corollary \ref{Corol:U} indicates that the pseudo-inverse $\Lop_{\V \phi}^{-1\ast}$ and the canonical adjoint inverse
$\Lop^{-1\ast}$ are undistinguishable on $\Spc U$. This implies that the topology of $\Spc U$ does not depend on the choice of biorthogonal system $(\V \phi,\V p)$.
By contrast, the effect of the two inverse operators is very
different on $\Spc N_{\V \phi}$: for any $\phi\in \Spc N_{\V \phi}$,  $\Lop_{\V \phi}^{-1\ast}\{\phi\}=0$ by design (Property 1), while $q=\Lop^{-1\ast}\{\phi\}\in L_{\infty,\alpha}(\R^d)$ is nonzero and, in general, not even included in $\Spc X$ unless $\phi=0$.

We end this section by listing the properties of $\Spc X_\Lop$ that we believe to be the
most relevant to practice. They are directly deducible from
Theorem \ref{Theo:pre-dual2}, too.
\begin{corollary}
\label{Corol:pre-dual}
Under the admissibility and compatibility hypotheses of Definition \ref{Def:Hypotheses}, the pre-dual space
$\Spc X_\Lop$ has the following properties:
\begin{itemize}
\item $\Spc X_\Lop$ is the completion of $\Spc S(\R^d)$ in the $\|\cdot\|_{\Spc X_\Lop}$-norm, which is specified  as $\|\varphi\|_{\Spc X_\Lop}=\max(\|\Lop_{\V \phi}^{-1\ast}\varphi\|_{\Spc X},\|\V p(\varphi)\|_2)$.
\item Let $f\in \Spc S'(\R^d)$. Then, $f \in \Spc X_\Lop$ if and only if there exists $(v,\phi) \in (\Spc X \times \Spc N_\V \phi)$ such that $f=\Lop^\ast v+\phi$. Moreover, the decomposition is unique with
$v=\Lop_{\V \phi}^{-1\ast}f$ and $\phi=\Proj_{\Spc N_\V \phi}f$.
\item $\Spc X_\Lop$ is a Schwartz-Banach space.
\end{itemize}

\end{corollary}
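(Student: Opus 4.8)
The plan is to read all three assertions directly off Theorem~\ref{Theo:pre-dual2} and Corollary~\ref{Corol:U}, treating the bullet points in order. The only genuine work is to translate the direct-sum description $\Spc X_\Lop = \Spc U \oplus \Spc N_\V \phi$ with $\Spc U = \Lop^\ast(\Spc X)$ into the three stated reformulations, and to reconcile the norm of \eqref{Eq:XLnorm} — which is phrased through $\Proj_{\Spc N_\V \phi}$ — with the simpler expression advertised for test functions.

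For the first bullet, I would recall that Theorem~\ref{Theo:pre-dual2} already supplies the dense continuous embedding $\Spc S(\R^d) \embedD \Spc X_\Lop$ into the Banach (hence complete) space $\Spc X_\Lop$; by definition of completion this is exactly the claim that $\Spc X_\Lop$ is the completion of $\Spc S(\R^d)$ for $\|\cdot\|_{\Spc X_\Lop}$. To get the displayed norm for $\varphi \in \Spc S(\R^d)$, I start from the general norm $\|g\|_{\Spc X_\Lop} = \max\big(\|\Lop_{\V \phi}^{-1\ast}g\|_{\Spc X}, \|\V p(\Proj_{\Spc N_\V \phi}\{g\})\|_2\big)$ inherited by continuity from \eqref{Eq:XLnorm} via the extensions of $\Lop_{\V \phi}^{-1\ast}$ and $\Proj_{\Spc N_\V \phi}$ to $\Spc X_\Lop$, and then simplify the second argument: biorthogonality \eqref{Eq:biortho} together with the identification $\widetilde{p}_n = p_n$ from Corollary~\ref{Corol:U} gives $\langle p_m, \Proj_{\Spc N_\V \phi}\{g\}\rangle = \sum_n \langle p_n, g\rangle \langle p_m, \phi_n\rangle = \langle p_m, g\rangle$, i.e. $\V p(\Proj_{\Spc N_\V \phi}\{g\}) = \V p(g)$, collapsing the norm to $\max(\|\Lop_{\V \phi}^{-1\ast}\varphi\|_{\Spc X}, \|\V p(\varphi)\|_2)$.

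For the second bullet, I would invoke the direct-sum decomposition $\Spc X_\Lop = \Lop^\ast(\Spc X) \oplus \Spc N_\V \phi$ of Theorem~\ref{Theo:pre-dual2}, read inside $\Spc S'(\R^d)$ thanks to the embedding $\Spc X_\Lop \embedC \Spc S'(\R^d)$. The ``if'' direction is immediate, since $\Lop^\ast(\Spc X) = \Spc U \subseteq \Spc X_\Lop$ and $\Spc N_\V \phi \subseteq \Spc X_\Lop$. For ``only if,'' any $f \in \Spc X_\Lop$ splits as $f = u + \phi$ with $u \in \Spc U$, $\phi \in \Spc N_\V \phi$; the isometry $\Lop^\ast: \Spc X \toIso \Spc U$ of Corollary~\ref{Corol:U} yields a unique $v \in \Spc X$ with $u = \Lop^\ast v$, and the null-space and left-inverse properties (Items~1–3 of Theorem~\ref{Theo:pre-dual2}) identify $v = \Lop_{\V \phi}^{-1\ast}f$ and $\phi = \Proj_{\Spc N_\V \phi}f$. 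Uniqueness follows from $\Spc U \cap \Spc N_\V \phi = \{0\}$.

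The third bullet is then one line: the first bullet exhibits $\Spc X_\Lop$ as the completion of $\Spc S(\R^d)$ in a norm under which $\Spc X_\Lop \subseteq \Spc S'(\R^d)$, which is precisely Definition~\ref{Def:SchwartzBanach}; equivalently one quotes the chain $\Spc S(\R^d) \embedD \Spc X_\Lop \embedD \Spc S'(\R^d)$ from Theorem~\ref{Theo:pre-dual2}. None of this is deep — the corollary is a repackaging — so the only point needing care is the bookkeeping that lets the abstractly completed operators and functionals be read as honest tempered distributions. This rests on the Schwartz--Banach identification already performed in part~(v) of the proof of Theorem~\ref{Theo:pre-dual2}, and is the one place where I would argue explicitly rather than formally.
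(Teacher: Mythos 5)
Your proposal is correct and follows exactly the route the paper intends: Corollary~\ref{Corol:pre-dual} is stated there without a separate proof, being declared ``directly deducible from Theorem~\ref{Theo:pre-dual2}'', and your argument simply makes that deduction explicit (dense embedding $\Spc S(\R^d)\embedD\Spc X_\Lop$ for the completion claim, the direct sum $\Spc U\oplus\Spc N_\V \phi$ with the isometry $\Lop^\ast:\Spc X\toIso\Spc U$ for the unique decomposition, and the embedding chain for the Schwartz--Banach property). The only nitpick is attributional: the identification $\widetilde p_n=p_n$ and the vanishing of $\V p$ on $\Spc U$ come from parts~(iii) and~(v) of the proof of Theorem~\ref{Theo:pre-dual2} rather than from Corollary~\ref{Corol:U}, but the simplification $\V p(\Proj_{\Spc N_\V \phi}\{g\})=\V p(g)$ itself is sound.
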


\subsection{Native space}
\begin{figure}
\
\centerline{
\includegraphics[width=10cm]{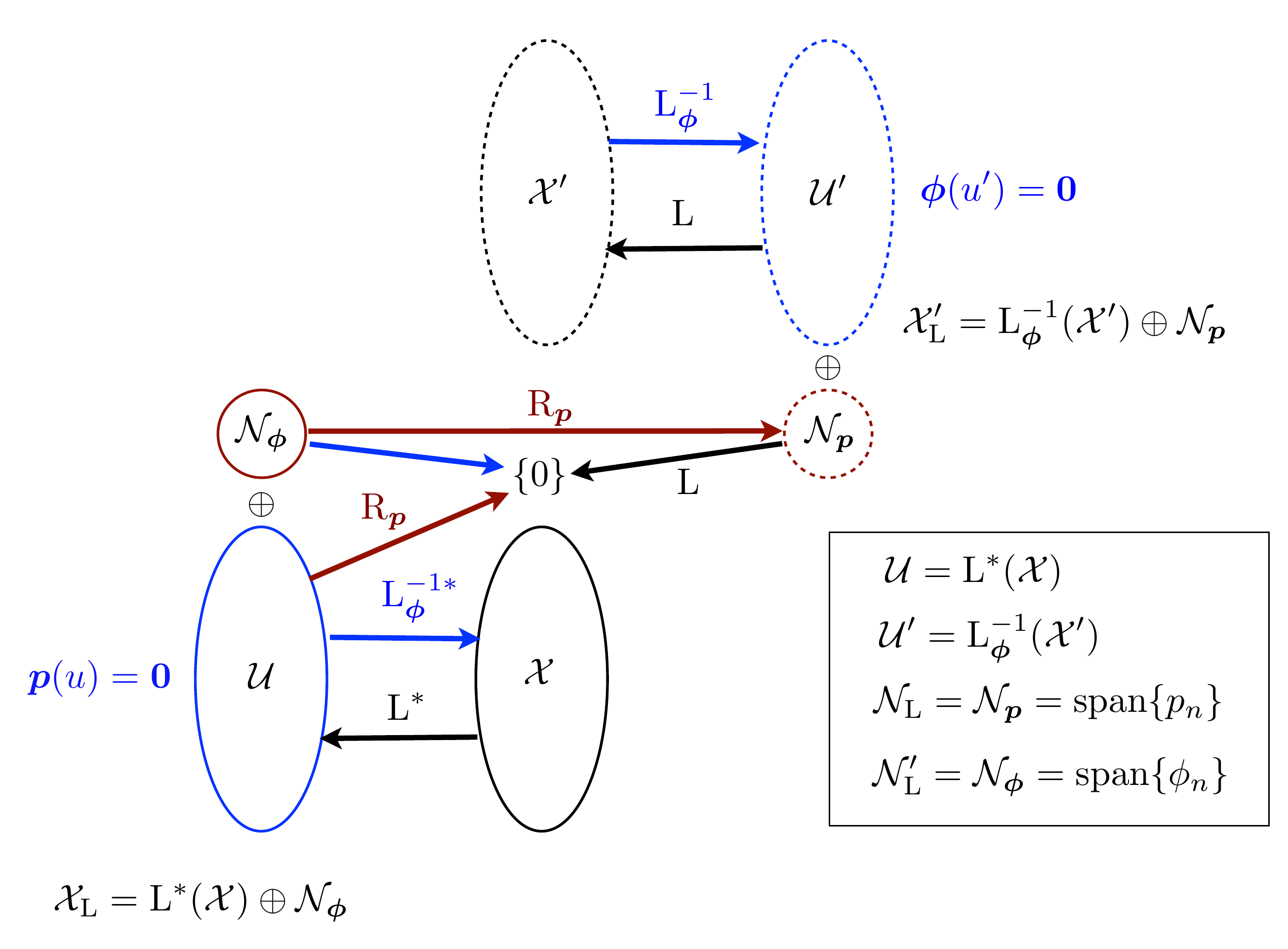}}
\caption{Schematic representation of the operators and Banach spaces that appear in the definition of $\Spc X'_\Lop$ (the native space for $\Lop$) and its pre-dual $\Spc X_{\Lop}$.
\label{Fig:Banach}
} 
\end{figure}

As indicated by the notation, the native space $\Spc X'_\Lop$ is the continuous dual of
$\Spc X_\Lop=\Spc U \oplus \Spc N_\V \phi$, where $\Spc U=\Lop^\ast(\Spc X)$. Accordingly, there is
a direct correspondence between the properties of $\Spc X_\Op L'$ and those of the predual space $\Spc X_\Lop$ in Theorem \ref{Theo:pre-dual2}.
The whole functional picture is summarized in Figure \ref{Fig:Banach}.
\begin{theorem} [Native Banach Space]
\label{Theo:NativeSpace2}
Under the admissibility and compatibility hypotheses of Definition \ref{Def:Hypotheses}, 
the continuous dual of $\Spc X_\Lop$
in Theorem \ref{Theo:pre-dual2} is the native
Banach space 
\begin{align}
\Spc X'_\Lop&=\Spc U' \oplus \Spc N_\V p=\Lop_\V \phi^{-1}(\Spc X') \oplus \Spc N_\V p \nonumber\\
&=\{\Lop^{-1}_{\V \phi} w + p: w \in \Spc X', p\in\Spc N_{\V p}\},
\end{align}
which is isometrically
isomorphic to $\Spc X' \times \Spc N_\V p$ equipped with the composite norm $\|w\|_{\Spc X'}+\|\V \phi(p)\|_2$. In other words, for any $ f\in \Spc X'_\Lop$, there is a unique pair
$w=\Lop f \in \Spc X'$ and $p=\Proj_{\Spc N_\V p}\{f\}= \sum_{n=1}^{N_0} \langle \phi_n,f\rangle p_n\in \Spc N_\V p$, with the finite-dimensional space $\Spc N_\V p= {\rm span}\{p_n\}_{n=1}^{N_0}$ being the null space of the operator $\Lop: \Spc X'_\Lop \toC \Spc X'$. 
This is consistent with the operator $\Lop_\V \phi^{-1}: \Spc X' \toC \Spc X'_\Lop$,
which is the adjoint of $\Lop_\V \phi^{-1\ast}$ in Theorem \ref{Theo:pre-dual2},
having the following properties:
\begin{enumerate}
\item Effective range: $\Spc U'= \Lop_\V \phi^{-1}(\Spc X')\eqdef \{s=\Lop_\V \phi^{-1}w: w \in \Spc X'\}$.
\item Annihilator: $\Spc N_\V \phi=\{g \in \Spc X_\Lop: \langle\Lop_\V \phi^{-1}w, g\rangle=0 \mbox{ for all } w \in \Spc X'\}$.
\item Right inverse of $\Lop$: $\Lop \Lop_\V \phi^{-1} w=w$  for any $w \in \Spc X'$.
\item Left pseudo-inverse: $ \Lop_\V \phi^{-1}\Lop f=(\Identity- \Proj_{\Spc N_\V p})\{f\}$ for any $f \in \Spc X'_\Lop$.
\end{enumerate}
Moreover, we have the hierarchy of continuous (and sometime dense) embeddings described by
$$\Spc S(\R^d) \embedC \Spc X'_\Lop \embedC L_{\infty,\alpha}(\R^d) 
\embedD \Spc S'(\R^d).$$
Finally, if $\Spc X$ is reflexive, then $\Spc X'_\Lop$ is reflexive as well and we have the dense embedding $\Spc S(\R^d) \embedD \Spc X'_\Lop $.
\end{theorem}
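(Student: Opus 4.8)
The plan is to derive every assertion by dualizing the structural results of Theorem~\ref{Theo:pre-dual2}, relying on the fact that $\Spc X_\Lop$ is a Schwartz-Banach space so that, by Proposition~\ref{Prop:SchwartzBanach}, its dual $\Spc X'_\Lop$ is automatically a Banach subspace of $\Spc S'(\R^d)$ with a duality product compatible with that of $\big(\Spc S'(\R^d),\Spc S(\R^d)\big)$ --- this is what gives the abstract dual a concrete interpretation in terms of tempered distributions. First I would dualize the isometric direct-sum decomposition $\Spc X_\Lop=\Spc U\oplus\Spc N_\V \phi\cong\Spc X\times\Spc N_\V \phi$ carried by the max-norm~\eqref{Eq:XLnorm}. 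Since $\Spc N_\V \phi$ is finite-dimensional, the direct-sum duality (Theorem~\ref{Theo:DirectSum}) yields $\Spc X'_\Lop=\Spc U'\oplus\Spc N'_\V \phi$, and because the dual of a maximum of two norms is the sum of the dual norms, the composite norm on $\Spc X'_\Lop$ is $\|s\|_{\Spc U'}+\|p\|_{\Spc N'_\V \phi}$.

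Next I would identify the two summands. By Corollary~\ref{Corol:U}, Item~5, $\Lop^{-1\ast}_\V \phi=\Lop^{-1\ast}\colon\Spc U\toIso\Spc X$ is an isometric isomorphism, so taking adjoints realizes $\Spc U'$ as $\Spc X'$ through $\Lop_\V \phi^{-1}\colon\Spc X'\toIso\Spc U'$; hence $\Spc U'=\Lop_\V \phi^{-1}(\Spc X')$ and $\|s\|_{\Spc U'}=\|w\|_{\Spc X'}$ when $s=\Lop_\V \phi^{-1}w$. For the finite-dimensional part, the biorthogonal pairing~\eqref{Eq:biortho} realizes $\Spc N'_\V \phi$ as $\Spc N_\V p$ with dual norm $\|\V \phi(p)\|_2$ (the $\ell_2$ norm being self-dual under that pairing). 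This produces the decomposition $f=\Lop_\V \phi^{-1}w+p$ with composite norm $\|w\|_{\Spc X'}+\|\V \phi(p)\|_2$, uniqueness being inherited from the direct sum, with $\Proj_{\Spc N_\V p}=\Proj^\ast_{\Spc N_\V \phi}$ recovering $p=\sum_{n=1}^{N_0}\langle\phi_n,f\rangle p_n$. Properties~1--4 of $\Lop_\V \phi^{-1}$ then follow by adjointing Properties~1--4 of $\Lop_\V \phi^{-1\ast}$: the effective range is Step~2; the right inverse $\Lop\Lop_\V \phi^{-1}w=w$ is dual to the left inverse $\Lop_\V \phi^{-1\ast}\Lop^\ast v=v$; the identity $\Lop_\V \phi^{-1}\Lop f=(\Identity-\Proj_{\Spc N_\V p})f$ is dual to $\Lop^\ast\Lop_\V \phi^{-1\ast}g=(\Identity-\Proj_{\Spc N_\V \phi})g$; and the annihilator characterization follows from $\langle\Lop_\V \phi^{-1}w,g\rangle=\langle w,\Lop_\V \phi^{-1\ast}g\rangle$ together with $\ker\Lop_\V \phi^{-1\ast}=\Spc N_\V \phi$. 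The operator $\Lop\colon\Spc X'_\Lop\toC\Spc X'$ is defined as the inverse of the isometry $\Lop_\V \phi^{-1}$ on $\Spc U'$ extended by zero on $\Spc N_\V p$, whence $\Lop f=w$, $\Lop p=0$, and $\ker\Lop=\Spc N_\V p$.

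The embeddings come from dualizing the chain $\Spc S(\R^d)\embedD L_{1,-\alpha}(\R^d)\embedD\Spc X_\Lop\embedD\Spc S'(\R^d)$ of Theorem~\ref{Theo:pre-dual2}. Using Theorem~\ref{Theo:Dense} on dual embeddings and the reflexivity of $\Spc S(\R^d)$, the dense embedding $\Spc X_\Lop\embedD\Spc S'(\R^d)$ gives $\Spc S(\R^d)\embedC\Spc X'_\Lop$, the dense embedding $L_{1,-\alpha}(\R^d)\embedD\Spc X_\Lop$ gives $\Spc X'_\Lop\embedC L_{\infty,\alpha}(\R^d)$, and $L_{\infty,\alpha}(\R^d)\embedD\Spc S'(\R^d)$ is inherited from $\Spc S(\R^d)\embedD\Spc S'(\R^d)$. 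In the reflexive case, $\Spc X_\Lop\cong\Spc X\times\Spc N_\V \phi$ is reflexive, hence so is $\Spc X'_\Lop$; moreover $\Spc X_\Lop$ is then a reflexive Schwartz-Banach space, so Proposition~\ref{Prop:SchwartzBanach} upgrades $\Spc S(\R^d)\embedC\Spc X'_\Lop$ to the dense embedding $\Spc S(\R^d)\embedD\Spc X'_\Lop$.

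I expect the main obstacle to be bookkeeping rather than analysis: matching the abstract dual decomposition entry by entry with the concrete operators $\Lop$ and $\Lop_\V \phi^{-1}$ realized as maps between subspaces of $\Spc S'(\R^d)$ --- in particular verifying that the $\Lop$ defined through the dual direct sum coincides with the original operator on $\Spc S(\R^d)$ and that its null space is exactly $\Spc N_\V p$ --- since every individual identity reduces to adjointing a statement already established in Theorem~\ref{Theo:pre-dual2}.
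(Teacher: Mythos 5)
Your proposal is correct and follows essentially the same route as the paper: dualize the direct-sum decomposition $\Spc X_\Lop=\Spc U\oplus\Spc N_{\V \phi}$ of Theorem \ref{Theo:pre-dual2} (the paper invokes Proposition \ref{Prop:dualDirectSum} with $(p,q)=(\infty,1)$ for the dual composite norm, which is the correct reference rather than Theorem \ref{Theo:DirectSum}), identify $\Spc U'=\Lop_{\V \phi}^{-1}(\Spc X')$ via the isometry $\Lop^\ast:\Spc X\toIso\Spc U$, obtain Properties 1--4 by adjointing their counterparts (the paper makes this precise through the separation of points by the weak and weak* topologies), and derive the embeddings from Theorem \ref{Theo:Dense} applied to the dense chain of Theorem \ref{Theo:pre-dual2}. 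Your closing remark about matching the abstract dual operator $\Lop$ with the concrete one on $\Spc S'(\R^d)$ is exactly the point the paper settles via the Schwartz-kernel identification in step (v) of the proof of Theorem \ref{Theo:pre-dual2}.
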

\begin{proof} The listed properties are the dual transpositions of the ones in Theorem \ref{Theo:pre-dual2}.
The key is $\Spc N'_{\V \phi}=\Spc N_{\V p}$ (see explanation in Section \ref{Sec:NulSpace})
and 
$\Spc X'_\Lop=(\Spc U\oplus \Spc N_\V \phi)'=\Spc U'\oplus \Spc N'_\V \phi=\Spc U'\oplus \Spc N_{\V p}$ equipped with the dual composite norm 
\begin{align*}
\|f\|_{\Spc X'_\Lop}&=\|(\|\Proj_{\Spc U}f \|_{\Spc U'},\|\Proj_{\Spc N_{\V p}}f\|_{\Spc N_\V p})\|_1\\
&=\|\Proj_{\Spc U'}f \|_{\Spc U'} +\|\V \phi(f)\|_2.
\end{align*}
For the details, the reader is referred to Appendix B on direct sums and Proposition \ref{Prop:dualDirectSum} with $(p,q)=(\infty,1)$.
 The other fundamental ingredient is the continuity of the adjoint operators
 $\Lop: \Spc U' \toIso \Spc X'$, $\Lop_{\V \phi}^{-1}: \Spc X' \toIso \Spc U'$,
 $\Lop_{\V \phi}^{-1}: \Spc X' \toC \Spc X'_\Lop$, $\Proj^\ast_{\Spc N_\V \phi}=\Proj_{\Spc N_\V p}: \Spc X'_\Lop \toC \Spc N_\V p \embedIso \Spc X'_\Lop$,
 which follows from the continuity of $\Lop^\ast: \Spc X \toIso \Spc U$, $\Lop_{\V \phi}^{-1\ast}: \Spc U \toIso \Spc X$, $\Lop_{\V \phi}^{-1\ast}: \Spc X_\Lop \toC \Spc X$, $\Proj_{\Spc N_\V \phi}: \Spc X_\Lop \toC\Spc N_\V \phi \embedIso \Spc X_\Lop$ in Theorem \ref{Theo:pre-dual2} and Corollary \ref{Corol:U}.
 The adjoint relation $\Proj_{\Spc N_\V p}=\Proj^\ast_{\Spc N_\V \phi}$ is due to the special form of the underlying kernel (see Table 1).
 
 \item  {\em (i) Identification of $\Spc U' =\Lop_{\V \phi}^{-1}(\Spc X')$ with $\|s\|_{\Spc U'}=\|\Lop s\|_{\Spc X'}$}\\
Since the mapping between $\Spc X'$ and $\Spc U'$ is isometric and bijective,
we have that $\Spc U' =\Lop_{\V \phi}^{-1}(\Spc X')$, with the two spaces being isometrically isomorphic.
By recalling the definition of the dual norm and invoking the isomorphism between $\Spc X$ and $\Spc U=\Lop^\ast(\Spc X)$ with $v \mapsto u=\Lop^\ast v$, we then get that
\begin{align*}
\|s\|_{\Spc U'}&=\sup_{u \in \Spc U \backslash \{0\}} \frac{\langle s, u \rangle_{\Spc U' \times \Spc U}Ê}{\|u\|_{\Spc U}Ê}=\sup_{v \in \Spc X\backslash \{0\}} \frac{\langle s, \Lop^\ast v \rangle_{\Spc U' \times \Spc U}ÊÊ}{\|v\|_{\Spc X}Ê}\\
&=\sup_{v \in \Spc X\backslash \{0\}} \frac{\langle \Lop s, v \rangle_{\Spc X' \times \Spc X}}{\|v\|_{\Spc X}Ê}=\|\Lop s\|_{\Spc X'}.
\end{align*}

 \item  {\em (ii) Derivation of Properties 2-4 by duality}\\
 The underlying principle is that the weak topology (resp. the weak$^\ast$ topology) separates the points in (resp. the dual of) a locally convex vector space
\cite{Rudin1991}.
Specifically, let $(\Spc X', \Spc X)$ be any dual pair of Banach spaces. Then, for any $g_1, g_2 \in \Spc X$ and $f_1,f_2 \in \Spc X'$, 
\begin{align*}
g_1=g_2 \quad &\Leftrightarrow \quad \langle f, g_1 \rangle_{\Spc X'\times \Spc X}=\langle f, g_2 \rangle_{\Spc X'\times \Spc X} \mbox{ for all } f\in \Spc X',\\
f_1=f_2 \quad &\Leftrightarrow \quad \langle f_1, g \rangle_{\Spc X'\times \Spc X}= \langle f_2, g \rangle_{\Spc X'\times \Spc X} \mbox{ for all } g\in \Spc X.
\end{align*}
Consequently, the null-space Property 2 in Theorem \ref{Theo:pre-dual2} is equivalent to
$$
g\in \Spc N_\V \phi \ \Leftrightarrow\ 0=\langle w, \Lop_{\V \phi}^{-1\ast} g\rangle_{\Spc X' \times \Spc X}=\langle \Lop_{\V \phi}^{-1}w, g\rangle_{\Spc X'_\Lop \times \Spc X_\Lop}, \forall w \in \Spc X', $$
which is the desired result. 
The same principle applies for the other properties.


\item {\em (iii) Embeddings}\\
The application of Theorem \ref{Theo:Dense} to the series of continuous and dense embeddings in
Theorem \ref{Theo:pre-dual2}
yields $$\Spc S(\R^d) \embedC\Spc X'_\Lop \embedC \big(L_{1,-\alpha}(\R^d)\big)'=L_{\infty,\alpha}(\R^d)\embedC\Spc S'(\R^d).$$
The denseness of the embedding $\Spc X'_\Lop \embedD \Spc S'(\R^d)$ follows from $\Spc S(\R^d) \embedD  \Spc S'(\R^d)$ and Proposition \ref{Prop:Hierarchy}.

\end{proof}

We note that Property 2 has the other equivalent formulation
$$\Spc U'=\{g \in \Spc X'_\Lop : \V \phi(g)=\V 0\}=\{g\in \Spc X'_\Lop: \Proj_{\Spc N_\V p}\{g\}=0\},
$$
which is consistent with the direct-sum property.
Hence the combination of Properties 2-4 implies a perfect isometry between $\Spc U'$ and $\Spc X'$
with $w=\Lop s \in \Spc X'$, $s=\Lop_\V \phi^{-1}w \in \Spc U'$,  and $\|s\|_{\Spc U'}=\|\Lop s\|_{\Spc X'}$.
The consideration of the direct-sum decomposition
$f=s+p$, where $p=\Proj_{\Spc N_\V p}\{f\} \in \Spc N_\V p$ and $s=(f-p)=\Lop_\V \phi^{-1}  w\in \Spc U'$,
allows us to identify the norm of $\Spc X'_\Lop$ as
\begin{align}
\|f\|_{\Spc X'_\Lop}&=\|s\|_{\Spc U'}+\|\V \phi (p)\|_2 =\|\Lop s\|_{\Spc X'}+\|\V \phi (p)\|_2 \nonumber \\
&=\|\Lop f\|_{\Spc X'}+
\|\V \phi(f)\|_2,
\end{align}
where we have made use of the property that $\Lop\{s + p\}=\Lop s$ and $\V \phi (s)=\V 0$
for all $(s,p) \in (\Spc U' \times \Spc N_\V p)$.
This is the basis for a restatement of the primary properties of $\Spc X'_\Lop$ in Theorem \ref{Theo:NativeSpace2} in a form 
more suitable for practitioners.

\begin{corollary}
\label{Corol:NativeSpace}
Under the admissibility and compatibility hypotheses of Definition \ref{Def:Hypotheses},  the native space of $(\Lop, \Spc X')$, denoted by $\Spc X'_\Lop$, has the following properties:
\begin{itemize}
\item $\Spc X'_\Lop$ is a Banach space that admits the explicit definition
\begin{align}\label{NativeDefRigorous}
\Spc X'_\Lop=\{f \in \Spc S'(\R^d): \|f\|_{\Spc X'_\Lop}= \sup_{\|\varphi\|_{\Spc X_\Lop}\le 1: \ \varphi \in \Spc S(\R^d)} \langle f, \varphi\rangle<\infty\}
\end{align}
or, equivalently,
$$
\Spc X'_\Lop=\{f \in L_{\infty,\alpha}(\R^d): \|f\|_{\Spc X'_\Lop}<\infty\},
$$
where $\alpha$ is the growth order associated with $\Lop$.
Moreover,
$f\in \Spc X'_\Lop \Leftrightarrow \|f\|_{\Spc X'_\Lop}=\|\Lop f\|_{\Spc X'}+\|\V \phi(f)\|_2$.
In particular, $f\in \Spc X'_\Lop \Rightarrow \|\Lop f\|_{\Spc X'}<+\infty$.
\item Let $f \in \Spc S'(\R^d)$. Then, $f \in \Spc X'_\Lop$ if and only if there exists $(w,p) \in (\Spc X' \times \Spc N_\V p)$ such that $f=\Lop_{\V \phi}^{-1}w+p$. Moreover, the decomposition is unique, with
$w=\Lop f$ and $p=\Proj_{\Spc N_\V p}\{f\}$.
\end{itemize}

\end{corollary}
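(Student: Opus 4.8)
The plan is to obtain every assertion as a direct transcription of Theorem \ref{Theo:NativeSpace2}, once it is combined with two general facts: that $\Spc X_\Lop$ is a Schwartz-Banach space (Corollary \ref{Corol:pre-dual}), and that the dual of such a space is governed by the membership criterion of Proposition \ref{Prop:SchwartzBanach}. Concretely, I would first apply Proposition \ref{Prop:SchwartzBanach} with $\Spc X_\Lop$ playing the role of $\Spc X$. Since $\Spc X'_\Lop$ is by definition the continuous dual of $\Spc X_\Lop$, this immediately yields the representation \eqref{NativeDefRigorous}, namely $\Spc X'_\Lop=\{f\in\Spc S'(\R^d):\|f\|_{\Spc X'_\Lop}<\infty\}$ with the dual norm computed as a supremum over $\varphi\in\Spc S(\R^d)$, the crucial point being that this formula is meaningful for every tempered distribution and that the supremum may be restricted to test functions by the denseness of $\Spc S(\R^d)$ in $\Spc X_\Lop$.

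For the equivalent description as a subset of $L_{\infty,\alpha}(\R^d)$, I would invoke the continuous embedding $\Spc X'_\Lop\embedC L_{\infty,\alpha}(\R^d)$ established in Theorem \ref{Theo:NativeSpace2}: it shows that the distributions with finite $\Spc X'_\Lop$-norm already lie in $L_{\infty,\alpha}(\R^d)$, so the ambient space $\Spc S'(\R^d)$ in \eqref{NativeDefRigorous} may be narrowed to $L_{\infty,\alpha}(\R^d)$ without changing the set. The closed-form value $\|f\|_{\Spc X'_\Lop}=\|\Lop f\|_{\Spc X'}+\|\V\phi(f)\|_2$ is exactly the composite norm exhibited by the isometric isomorphism $\Spc X'_\Lop\cong\Spc X'\times\Spc N_\V p$ of Theorem \ref{Theo:NativeSpace2}; the implication $f\in\Spc X'_\Lop\Rightarrow\|\Lop f\|_{\Spc X'}<\infty$ is then immediate, since each summand is dominated by the total norm.

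The one step that requires an actual argument rather than mere transcription is the converse half of the norm equivalence---that finiteness of $\|\Lop f\|_{\Spc X'}+\|\V\phi(f)\|_2$ forces $f\in\Spc X'_\Lop$---and this is where I expect the main (if modest) obstacle to lie, since the composite expression a priori makes sense for distributions outside the native space. I would argue inside $L_{\infty,\alpha}(\R^d)$: given such an $f$, set $w=\Lop f\in\Spc X'$ (finite by hypothesis) and $p=\Proj_{\Spc N_\V p}\{f\}=\sum_{n}\langle\phi_n,f\rangle p_n\in\Spc N_\V p$, and form the candidate $g=\Lop_\V\phi^{-1}w+p\in\Spc X'_\Lop$. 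Applying $\Lop$ and using the right-inverse property $\Lop\Lop_\V\phi^{-1}w=w$ together with $\Lop p=0$ gives $\Lop(f-g)=0$; since $f-g\in L_{\infty,\alpha}(\R^d)$, Condition \ref{Item:FiniteDimNullSpace} of Definition \ref{Def:splineadmis3} places $f-g$ in $\Spc N_\V p$. Testing against each $\phi_m$ and using biorthogonality together with the annihilator relation $\V\phi(\Lop_\V\phi^{-1}w)=\V0$ (Property 2 of Theorem \ref{Theo:NativeSpace2}) yields $\langle\phi_m,f-g\rangle=0$ for all $m$, hence $f-g=0$ and $f=g\in\Spc X'_\Lop$. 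The finite-dimensionality of the null space enshrined in Condition \ref{Item:FiniteDimNullSpace} is precisely what makes this reconstruction valid.

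Finally, the second bullet is a verbatim reading of the direct-sum decomposition $\Spc X'_\Lop=\Lop_\V\phi^{-1}(\Spc X')\oplus\Spc N_\V p$ of Theorem \ref{Theo:NativeSpace2}: existence and uniqueness of the pair $(w,p)$ are the content of that direct sum, while the identifications $w=\Lop f$ and $p=\Proj_{\Spc N_\V p}\{f\}$ follow by applying, respectively, $\Lop$ (right inverse, with $\Lop p=0$) and the projector $\Proj_{\Spc N_\V p}$ (which reproduces $\Spc N_\V p$ and annihilates $\Lop_\V\phi^{-1}(\Spc X')=\Spc U'$) to the decomposition. No further work is needed beyond citing these properties.
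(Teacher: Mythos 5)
Your proposal is correct and follows essentially the same route as the paper: Corollary \ref{Corol:NativeSpace} is obtained there as a direct transcription of Theorem \ref{Theo:NativeSpace2} (direct-sum decomposition, composite norm, embeddings) combined with the dual-norm membership criterion of Proposition \ref{Prop:SchwartzBanach} applied to the Schwartz-Banach space $\Spc X_\Lop$. Your explicit argument for the converse implication---that finiteness of $\|\Lop f\|_{\Spc X'}+\|\V \phi(f)\|_2$ for $f\in L_{\infty,\alpha}(\R^d)$ forces $f\in\Spc X'_\Lop$, via the candidate $g=\Lop_{\V \phi}^{-1}\Lop f+\Proj_{\Spc N_\V p}\{f\}$ and the restricted-null-space Condition \ref{Item:FiniteDimNullSpace}---is sound and in fact makes rigorous a step that the paper only treats informally in the discussion following the corollary.
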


Equation \eqref{NativeDefRigorous}
provides a rigorous, self-contained definition of $\Spc X'_\Lop$, but it has the disadvantage of being a bit convoluted. 
We like to view this equation as the justification for the two alternative
forms
\begin{align}
\Spc X'_\Lop&=\left\{f \in L_{\infty,\alpha}(\R^d): \|f\|_{\Spc X'_\Lop}=\|\Lop f\|_{\Spc X'}+\|\V \phi(f)\|_2 
<\infty\right\} \label{Eq:Def2Native}\\
\Spc X'_\Lop&=\left\{f \in L_{\infty,\alpha}(\R^d): \|\Lop f\|_{\Spc X'}<\infty\right\}, \label{Eq:Def3Native}
\end{align}
which are adequate if one implicitly assumes that $\Spc N_\V \phi\subset L_{1,-\alpha}(\R^d)$ and $f \notin \Spc X'_\Lop \Leftrightarrow \|\Lop f\|_{\Spc X'}+\|\V \phi(f)\|_2=\infty$.
While $\Lop f$ is {\em a priori} undefined for $f \notin \Spc X'_\Lop$, one circumvents the formal difficulty by adopting the more permissive dual definition of the underlying semi-norm
\begin{align*}
\sup_{\|\varphi\|_{\Spc X}\le 1: \ \varphi \in \Spc S(\R^d)}
\langle f, \Lop^\ast \varphi \rangle=\left\{ \begin{array}{ll}
\|\Lop f\|_{\Spc X'},  &   f \in \Spc X'_\Lop   \\
+\infty, &   \mbox{ otherwise},
 \end{array}
\right.
\end{align*}
which is valid for any $f\in L_{\infty,\alpha}(\R^d)$ in reason of the denseness of $\Spc S(\R^d)$ in $\Spc X$ and the admissibility condition $\Lop^\ast \varphi \in L_{1,-\alpha}(\R^d)$.
Under this interpretation, \eqref{Eq:Def3Native} is legitimate as well since $\Spc X'_\Lop\embedC L_{\infty,\alpha}(\R^d)$. Indeed, the hypothesis of spline-admissibility (Definition \ref{Def:splineadmis3})
requires that the growth-restricted null space of $\Lop$ be finite-dimensional and spanned by some basis  $\V p=(p_1,\dots,p_{N_0})$, while one necessarily has that
$\|\V \phi(f)\|_2<\infty$ for all $f \in L_{\infty,\alpha}(\R^d)$ because $\Spc N_\V \phi \subset L_{1,-\alpha}(\R^d) \embedIso \big(L_{\infty,\alpha}(\R^d)\big)'=\big(L_{1,-\alpha}(\R^d)\big)''$. The slight disadvantage is that \eqref{Eq:Def3Native} does not fully specify the underlying topology.

\subsection{Equivalent topologies and biorthogonal systems}
In order to show that the choice of one biorthogonal system over another---say, $(\tilde{\V \phi}, \tilde{\V p})$ vs.\  $(\V \phi, \V p)$---has no direct incidence on the definition of the underlying native
space, 
we start by extending the range of validity of Theorems \ref{Theo:pre-dual2} and \ref{Theo:NativeSpace2} 
to the complete set of admissible systems $(\tilde{\V \phi}, \tilde{\V p})$ with
$\tilde p_1, \dots,\tilde p_{N_0} \in \Spc N_{\Lop}$, $\tilde \phi_1, \dots,\tilde \phi_{N_0} \in \Spc X_{\Lop}$, and 
$[\tilde{\V \phi}(\tilde p_1) \cdots \tilde{\V \phi}(\tilde p_{N_0}) ]=\M I$ (biorthogonality).
To that end,  we rely on the existence of a primary biorthogonal system
such that $ \Spc N_{{\V \phi}}={\rm span}\{\phi_n\}\subset \Spc S(\R^d)$ (universality property), which ensures that
the initial pre-dual space $\Spc X_\Lop$ is well-defined.
\begin{proposition}
\label{Prop:ExtensionTheorem}
Let $\tilde {\V \phi}=(\tilde\phi_n)$ with $\tilde \phi_n \in \Spc X_\Lop$ for $n=1,\dots,N_0$ be such that the matrix $\M C=[\tilde{\V \phi}(p_1) \cdots \tilde{\V \phi}(p_{N_0}) ]\in \R^{N_0 \times N_0}$
is invertible. Then, there exists a unique basis $\tilde {\V p}$ of $\Spc N_\Lop=\Spc N_{\tilde{\V p}}$ such that Theorems 
\ref{Theo:pre-dual2}-\ref{Theo:NativeSpace2} and Corollaries \ref{Corol:U}-\ref{Corol:NativeSpace}
 remain valid for the biorthogonal system $(\tilde {\V \phi},\tilde {\V p})$ and define a pair of native and pre-dual spaces 
 $\tilde{\Spc X}'_{\Lop}=\Spc U' \oplus  \Spc N_{\tilde{\V p}}$ and $\tilde{\Spc X}_{\Lop}=\Spc U \oplus \Spc N_{\tilde{\V \phi}}$,
 where $\Spc U=\Lop^\ast(\Spc X)$. This 
 construction then specifies four operators
 with the following properties:
 \begin{itemize}
 \item $ \Proj_{\Spc N_{\tilde{\V \phi}}}=\Proj^\ast_{\Spc N_{\tilde{\V p}}}: \tilde{\Spc X}_{\Lop} \toC \Spc N_{\tilde{\V \phi}}: 
 g \mapsto \sum_{n=1}^{N_0} \tilde \phi_n \langle \tilde p_n, g \rangle $ such that
\begin{align*}
\forall
 \phi \in \Spc N_{\tilde{\V \phi}}: \quad &\Proj_{\Spc N_{\tilde{\V \phi}}}\{\phi\}=\phi\\
\forall
u \in \Spc U: \quad &\Proj_{\Spc N_{\tilde{\V \phi}}}\{u\}=0.
\end{align*}
 \item $\Lop_{\tilde{\V \phi}}^{-1\ast}=\Lop^{-1\ast}(\Identity-\Proj_{\Spc N_{\tilde{\V \phi}}}): L_{1,-\alpha}(\R^d) \embedC \tilde{\Spc X}_{\Lop} \toC \Spc X$
  such that
  \begin{align*}
\forall
\phi \in \Spc N_{\tilde{\V \phi}}: \quad &\Lop_{\tilde{\V \phi}}^{-1\ast}\phi=0\\
\forall
g \in \tilde{\Spc X}_\Lop: \quad &\Lop^\ast \Lop_{\tilde{\V \phi}}^{-1\ast}g=(\Identity-\Proj_{\Spc N_{\tilde{\V \phi}}})\{g\}\\
\forall
v \in \Spc X: \quad &\Lop_{\tilde{\V \phi}}^{-1\ast}\Lop^\ast v=v.
\end{align*}
  
  \item $ \Proj_{\Spc N_{\tilde{\V p}}}: \tilde{\Spc X}'_{\Lop} \toC \Spc N_{\tilde{\V p}}: 
 f \mapsto \sum_{n=1}^{N_0}  \tilde p_n\langle \tilde \phi_n, f \rangle$ such that
\begin{align*}
\forall
p\in \Spc N_{\tilde{\V p}}: \quad &\Proj_{\Spc N_{\tilde{\V p}}}\{p\}=p\\
\forall
v \in \Spc U': \quad &\Proj_{\Spc N_{\tilde{\V p}}}\{v\}=0.
\end{align*}
 
 \item $\Lop_{\tilde{\V \phi}}^{-1}: \Spc X' \toC \tilde{\Spc X}_\Lop'$
such that
\begin{align*}
\forall
w \in \Spc X': \quad &\tilde{\V \phi}(\Lop_{\tilde{\V \phi}}^{-1}\{w\})=\V 0\\
\forall
w \in \Spc X': \quad &\Lop\Lop_{\tilde{\V \phi}}^{-1}w=w\\
\forall
f \in \Spc X_\Lop': \quad & \Lop_{\tilde{\V \phi}}^{-1}\Lop\{f\}=(\Identity-\Proj_{\Spc N_{\tilde{\V p}}})\{f\}.
\end{align*}

\end{itemize}

\end{proposition}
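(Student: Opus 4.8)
The plan is to leverage the intrinsic nature of the subspace $\Spc U = \Lop^\ast(\Spc X)$: by Corollary \ref{Corol:U}, $\Spc U$ and the isometry $\Lop^{-1\ast} = \Lop_{\V \phi}^{-1\ast}: \Spc U \toIso \Spc X$ do not depend on the choice of biorthogonal system. Hence passing from $(\V \phi, \V p)$ to $(\tilde{\V \phi}, \tilde{\V p})$ merely replaces the finite-dimensional complement $\Spc N_{\V \phi}$ of $\Spc U$ inside the \emph{fixed} Banach space $\Spc X_\Lop = \Spc U \oplus \Spc N_{\V \phi}$ of Theorem \ref{Theo:pre-dual2} by another complement $\Spc N_{\tilde{\V \phi}}$. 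All four operators and the dual statements will then be recovered by bookkeeping on these complements.

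First I would construct $\tilde{\V p}$. Writing $\tilde p_j = \sum_k A_{kj} p_k$ and imposing the biorthogonality $\langle \tilde \phi_i, \tilde p_j \rangle = \delta_{ij}$ gives $\M C \M A = \M I$; since $\M C$ is invertible by hypothesis, $\M A = \M C^{-1}$ is the unique solution. This defines $\tilde{\V p}$ as the unique basis of $\Spc N_\Lop = \Spc N_{\tilde{\V p}}$ biorthogonal to $\tilde{\V \phi}$, and in particular $\tilde p_n \in \Spc N_{\V p} \subset \Spc X'_\Lop$. Next---the heart of the argument---I would verify that $\Spc X_\Lop = \Spc U \oplus \Spc N_{\tilde{\V \phi}}$. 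Applying the intrinsic projector $\Proj_{\Spc N_{\V \phi}}\{g\} = \sum_m \langle p_m, g\rangle \phi_m$ of Theorem \ref{Theo:pre-dual2} to $\tilde \phi_n \in \Spc X_\Lop$ yields $\Proj_{\Spc N_{\V \phi}}\tilde \phi_n = \sum_m C_{nm} \phi_m$; invertibility of $\M C$ shows that the images $\{\Proj_{\Spc N_{\V \phi}}\tilde \phi_n\}$ form a basis of $\Spc N_{\V \phi}$. Consequently $\{\tilde \phi_n\}$ is linearly independent modulo $\Spc U$ (if $\sum_n a_n \tilde \phi_n \in \Spc U$, then $\V a^{\top} \M C = \V 0$, whence $\V a = \V 0$), so $\Spc N_{\tilde{\V \phi}}$ is a complement of $\Spc U$ and $\tilde{\Spc X}_\Lop := \Spc U \oplus \Spc N_{\tilde{\V \phi}}$ coincides with $\Spc X_\Lop$ as a topological vector space.

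With the decomposition in hand, I would define $\Proj_{\Spc N_{\tilde{\V \phi}}}: g \mapsto \sum_n \langle \tilde p_n, g \rangle \tilde \phi_n$, which is bounded on $\Spc X_\Lop$ because $\tilde p_n \in \Spc X'_\Lop = (\Spc X_\Lop)'$; the relations $\Proj_{\Spc N_{\tilde{\V \phi}}}\{\phi\} = \phi$ on $\Spc N_{\tilde{\V \phi}}$ and $\Proj_{\Spc N_{\tilde{\V \phi}}}\{u\} = 0$ on $\Spc U$ follow from biorthogonality and from $\langle p, u\rangle = 0$ for $p \in \Spc N_{\V p}$, $u \in \Spc U$ (Corollary \ref{Corol:U}, Item 6). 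The pseudo-inverse $\Lop_{\tilde{\V \phi}}^{-1\ast} := \Lop^{-1\ast}(\Identity - \Proj_{\Spc N_{\tilde{\V \phi}}})$ then factors as $g \mapsto \Lop^{-1\ast}(\Proj_{\Spc U} g)$ with $\Proj_{\Spc U} = \Identity - \Proj_{\Spc N_{\tilde{\V \phi}}}: \Spc X_\Lop \toC \Spc U$; since $\Lop^{-1\ast}: \Spc U \toIso \Spc X$ by Corollary \ref{Corol:U}, this composition is bounded $\Spc X_\Lop \toC \Spc X$, and its restriction along $L_{1,-\alpha}(\R^d) \embedC \Spc X_\Lop$ is precisely the $\Spc X$-stability (Condition 4 of Definition \ref{Def:Hypotheses}) for the new system. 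The remaining identities ($\Lop_{\tilde{\V \phi}}^{-1\ast}\phi = 0$, $\Lop_{\tilde{\V \phi}}^{-1\ast}\Lop^\ast v = v$, and $\Lop^\ast\Lop_{\tilde{\V \phi}}^{-1\ast}g = \Proj_{\Spc U}g$) are immediate from $\Proj_{\Spc U}|_{\Spc N_{\tilde{\V \phi}}} = 0$, $\Proj_{\Spc U}|_\Spc U = \Identity$, and $\Lop^\ast\Lop^{-1\ast}|_\Spc U = \Identity$.

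Finally I would obtain the native-space assertions by transposition, exactly as in Theorem \ref{Theo:NativeSpace2}: since $\Spc U$ is unchanged and $\Spc N'_{\tilde{\V \phi}} = \Spc N_{\tilde{\V p}}$, the dual direct sum (Appendix B, Proposition \ref{Prop:dualDirectSum}) gives $\tilde{\Spc X}'_\Lop = \Spc U' \oplus \Spc N_{\tilde{\V p}}$, and the adjoints $\Proj_{\Spc N_{\tilde{\V p}}} = \Proj^\ast_{\Spc N_{\tilde{\V \phi}}}$ and $\Lop_{\tilde{\V \phi}}^{-1} = (\Lop_{\tilde{\V \phi}}^{-1\ast})^\ast$ inherit their four stated properties from the pre-dual operators. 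The step I expect to require the most care is the reading of $\Lop^{-1\ast}$ in the definition of $\Lop_{\tilde{\V \phi}}^{-1\ast}$: it must be taken as the intrinsic isometry $\Spc U \toIso \Spc X$ of Corollary \ref{Corol:U} (which agrees with the canonical $L_{1,-\alpha} \to L_{\infty,\alpha}$ inverse only on the dense subset $\Spc U \cap L_{1,-\alpha}$, while $\Spc U$ is strictly larger), so that the factorization through $\Spc U$---together with the verification that $\M C$ invertible makes $\Spc N_{\tilde{\V \phi}}$ genuinely complementary---is where the transfer of $\Spc X$-stability really takes place.
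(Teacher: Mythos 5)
Your proposal is correct and follows essentially the same route as the paper's own proof: construct $\tilde{\V p}=\M C^{-1}\V p$, establish continuity of $\Proj_{\Spc N_{\tilde{\V \phi}}}$ from $\tilde p_n\in\Spc X'_\Lop$ and $\tilde\phi_n\in\Spc X_\Lop$, factor $\Lop_{\tilde{\V \phi}}^{-1\ast}$ through $\Spc U$ via the intrinsic isometry $\Lop^{-1\ast}:\Spc U\toIso\Spc X$ of Corollary \ref{Corol:U}, transfer $\Spc X$-stability through $L_{1,-\alpha}(\R^d)\embedC\Spc X_\Lop$, and dualize. Your explicit check that invertibility of $\M C$ makes $\Spc N_{\tilde{\V \phi}}$ genuinely complementary to $\Spc U$ is a welcome elaboration of a step the paper handles more tersely via $\V p\big(g-\Proj_{\Spc N_{\tilde{\V \phi}}}\{g\}\big)=\V 0$.
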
 
\begin{proof}
The new basis
$\tilde{\V p}$ of $\Spc N_{\V p}=\Spc N_{\tilde{\V p}}$ is given by
$
\tilde{\V p}=\M C^{-1} \V p
$, which can easily be seen to be biorthogonal to $\tilde{\V \phi}$.
Next, we check that the operators $\Proj_{\Spc N_{\tilde{\V \phi}}}$ and
$(\Identity-\Proj_{\Spc N_{\tilde{\V \phi}}})$ are continuous on $\Spc X_\Lop=\Lop^\ast(\Spc X) \oplus \Spc N_{\V p}$ under the hypothesis that $\tilde \phi_n \in \Spc X_\Lop$.
Specifically, for any $g \in \Spc X_\Lop$, we have that
\begin{align*}
\|\Proj_{\Spc N_{\tilde{\V \phi}}}g\|_{\Spc X_\Lop} &\le \sum_{n=1}^{N_0} |\langle \tilde p_n, g \rangle| \; \|\tilde \phi_n\|_{\Spc X_\Lop} \tag{by the triangle inequality}\\
 & \le  \sum_{n=1}^{N_0} \|\tilde p_n\|_{\Spc X'_\Lop} \|g\|_{\Spc X_\Lop} \|\tilde \phi_n\|_{\Spc X_\Lop} = C_1 \|g\|_{\Spc X_\Lop},
\end{align*}
where $C_1=\sum_{n=1}^{N_0} \|\tilde p_n\|_{\Spc X'_\Lop}  \|\tilde \phi_n\|_{\Spc X_\Lop}<\infty$. 
The property that $\Proj_{\Spc N_{\tilde{\V \phi}}}\{g\} \in \Spc N_{\tilde{\V \phi}}$ (by construction) translates into
$\Proj_{\Spc N_{\tilde{\V \phi}}}: \Spc X_\Lop \toC \Spc N_{\tilde{\V \phi}}\embedC \Spc X_\Lop$. It is also obvious from the definition that
$\Proj_{\Spc N_{\tilde{\V \phi}}}$ is the adjoint of $\Proj_{\Spc N_{\tilde{\V p}}}$ whose Schwartz kernel is $(\V x,\V y)\mapsto \sum_{n=1}^{N_0} \tilde p_n(\V x)\tilde \phi_n(\V y)$.

Likewise, we have that
$\|(\Identity-\Proj_{\Spc N_{\tilde{\V \phi}}})\{g\}\|_{\Spc X_\Lop}\le (1+C_1) \|g\|_{\Spc X_\Lop}$.
By invoking the 
biorthogonality of $(\tilde{\V \phi}, \tilde{\V  p})$, we readily verify that ${\tilde{\V  p}(g-\Proj_{\Spc N_{\tilde{\V \phi}}} \{g\})}=\V 0 \Leftrightarrow \V  p(g-\Proj_{\Spc N_{\tilde{\V \phi}}} \{g\})=\V 0$, which yields $(\Identity-\Proj_{\Spc N_{\tilde{\V \phi}}}) \{g\} \in \Spc U$, thereby proving that
$(\Identity-\Proj_{\Spc N_{\tilde{\V \phi}}}): \Spc X_\Lop \toC \Spc U\embedC \Spc X_\Lop$.
Since $\Lop^{-1\ast}: \Spc U \toC \Spc X$ (see Corollary \ref{Corol:U}), 
we can therefore chain the two operators, which results in $\Lop^{-1\ast}\circ (\Identity-\Proj_{\Spc N_{\tilde{\V \phi}}}): \Spc X_\Lop \toC \Spc U \toC \Spc X$,
thereby proving the continuity of $\Lop_{\tilde{\V \phi}}^{-1\ast}
: \Spc X_\Lop \toC \Spc X$.
Finally, we invoke the continuous embedding $L_{1,-\alpha}(\R^d) \embedC \Spc X_\Lop$ (see Theorem \ref{Theo:pre-dual2}),
which ensures that $\Lop_{\tilde{\V \phi}}^{-1\ast}: L_{1,-\alpha}(\R^d) \toC \Spc X$, in accordance with the last compatibility requirement in Definition \ref{Def:Hypotheses}. 

Given that the underlying operators all satisfy the required continuity and annihilation properties, we can then revisit the proofs and constructions in Theorems 
\ref{Theo:pre-dual2} and \ref{Theo:NativeSpace2} 
to specify
the corresponding pair of spaces $\tilde {\Spc X}_\Lop$ and $\tilde {\Spc X}'_\Lop$, which inherit the same embedding properties as ${\Spc X}_\Lop$ and ${\Spc X}'_\Lop$. 

\end{proof}

An important outcome of the proof of Proposition \ref{Prop:ExtensionTheorem} is that the compatibility condition for a single instance $({\V \phi}, {\V p})$ is transferred
to all admissible biorthogonal systems $(\tilde{\V \phi}, \tilde{\V p})$.
Theorem \ref{TheoNormEquiv} describes the effect of such a change of biorthogonal system 
on the underlying norms, while it ensures that
the underlying topologies are equivalent.

\begin{theorem}[Equivalent direct-sum topologies]
\label{TheoNormEquiv}
Let $\Lop$ be a spline-admissible operator that is compatible with $\Spc X'$ in the sense 
of Definition \ref{Def:Hypotheses}.
Then, for any two biorthogonal systems $(\V \phi, \V p)$ and $(\tilde{\V \phi}, \tilde{\V p})$
with $\Spc N_\V p=\Spc N_{\tilde{\V p}}=\Spc N_\Lop$ and
$\V \phi, \tilde{\V \phi} \in \Spc X_\Lop^{N_0}$, we have that
\begin{itemize}
\item $\Spc X_\Lop=\tilde{\Spc X}_{\Lop}$ and $\Spc X'_\Lop=\tilde{\Spc X}'_{\Lop}$ as sets;
\item the norms $\|\cdot\|_{\Spc X_\Lop}$ and $\|\cdot\|_{\tilde{\Spc X}_{\Lop}}$ are equivalent on $\Spc X_\Lop$;
\item the norms $\|\cdot\|_{\Spc X'_\Lop}$ and $\|\cdot\|_{\tilde{\Spc X}'_{\Lop}}$ are equivalent on $\Spc X'_\Lop$.
\end{itemize}
More precisely, with the operator definitions in Proposition \ref{Prop:ExtensionTheorem} and
\begin{align*}
\|f\|_{\tilde{\Spc X}'_{\Lop}}&= \|\Lop f\|_{\Spc X'} + \|\tilde{\V \phi}(f)\|_2\\
\|g\|_{\tilde{\Spc X}_{\Lop}}&= \|\Lop_{\tilde{\V \phi}}^{-1\ast} g\|_{\Spc X} + \|\tilde{\V p}(g)\|_2,
\end{align*}
we have the equivalence relations
\begin{align}
\forall f\in \Spc X'_\Lop:\quad &\|\Lop f \|_{\Spc X'}=\|(\Identity- \Proj_{\Spc N_{\tilde{\V p}}})\{f\} \|_{\tilde{\Spc X'_\Lop}} \label{Eq:norm1}\\
\forall f \in \Spc X'_\Lop: \quad&A_{1} \, \| f \|_{\Spc X'_\Lop} \le  \| f \|_{\tilde{\Spc X}'_{\Lop}} \le A_{2}\; \| f \|_{\Spc X'_\Lop}\label{Eq:norm2}\\
\forall p \in \Spc N_{\Lop}=\Spc N_{\V p}: \quad&B_{1} \, \|\V \phi(p)\|_2 \le \|\tilde{\V \phi}(p)\|_2 \le B_2\,  \|\V \phi(p)\|_2
\label{Eq:norm3}
\end{align}
\begin{align}
\forall u\in \Spc U=\Lop^\ast(\Spc X):\quad &\|\Lop_{\tilde{\V \phi}}^{-1\ast} u\|_{\Spc X}=\|(\Identity- \Proj_{\Spc N_{\tilde{\V \phi}}})\{u\} \|_{\Spc X_\Lop}=\|\Lop^{-1\ast} u \|_{\Spc X}
\label{Eq:norm1d}\\
\forall g \in \Spc X_{\Lop}: \quad&A'_1\, \|g \|_{\Spc X_\Lop} \le  \| g\|_{\tilde{\Spc X}_{\Lop}} \le A'_2\; \| g\|_{\Spc X_\Lop}
\label{Eq:norm2d}\\
\forall g \in \Spc X_{\Lop}: \quad&B'_1 \, \|\V p(g)\|_2 \le \|\tilde{\V p}(g)\|_2 \le B'_2\,  \|\V p(g)\|_2\label{Eq:norm3d}
\end{align}
for some suitable constants $A_1, A_2, B_1, B_2, A'_1, A'_2, B'_1, B'_2 > 0$.
\end{theorem}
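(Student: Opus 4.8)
The plan is to build everything on Proposition \ref{Prop:ExtensionTheorem}, which already produces the native pair $\tilde{\Spc X}_\Lop = \Spc U \oplus \Spc N_{\tilde{\V \phi}}$ and $\tilde{\Spc X}'_\Lop = \Spc U' \oplus \Spc N_{\tilde{\V p}}$ sharing the \emph{same} intrinsic part $\Spc U = \Lop^\ast(\Spc X)$ (and its dual $\Spc U'$) as the primary pair. First I would settle the two set equalities. On the dual side this is immediate, since $\Spc N_{\V p}$ and $\Spc N_{\tilde{\V p}}$ are merely two bases of the single subspace $\Spc N_\Lop \subset \Spc S'(\R^d)$, whence $\Spc X'_\Lop = \Spc U' \oplus \Spc N_\Lop = \tilde{\Spc X}'_\Lop$ verbatim. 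On the predual side the complements $\Spc N_{\V \phi}$ and $\Spc N_{\tilde{\V \phi}}$ are genuinely distinct subspaces, so I would argue through the continuous projectors of Proposition \ref{Prop:ExtensionTheorem}: the hypothesis $\tilde{\V \phi} \in \Spc X_\Lop^{N_0}$ gives $\Spc N_{\tilde{\V \phi}} \subset \Spc X_\Lop$ and hence $\tilde{\Spc X}_\Lop \subseteq \Spc X_\Lop$; conversely every $\phi \in \Spc N_{\V \phi}$ splits as $\phi = \Proj_{\Spc N_{\tilde{\V \phi}}}\{\phi\} + (\Identity - \Proj_{\Spc N_{\tilde{\V \phi}}})\{\phi\}$ with the first term in $\Spc N_{\tilde{\V \phi}}$ and the second in $\Spc U$, so $\Spc N_{\V \phi} \subset \tilde{\Spc X}_\Lop$ and the reverse inclusion follows. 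Thus $\Spc X_\Lop = \tilde{\Spc X}_\Lop$ as sets.

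Next I would prove the intrinsic identities \eqref{Eq:norm1} and \eqref{Eq:norm1d}, which say that the norm of the $\Spc U'$- (resp.\ $\Spc U$-) component is independent of the chosen system. For \eqref{Eq:norm1}, the element $(\Identity - \Proj_{\Spc N_{\tilde{\V p}}})\{f\}$ lies in $\Spc U'$ and is therefore annihilated by $\tilde{\V \phi}$, so its $\tilde{\Spc X}'_\Lop$-norm collapses to $\|\Lop (\Identity - \Proj_{\Spc N_{\tilde{\V p}}})\{f\}\|_{\Spc X'}$; since $\Proj_{\Spc N_{\tilde{\V p}}}\{f\} \in \Spc N_\Lop$ is killed by $\Lop$, this equals $\|\Lop f\|_{\Spc X'}$. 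The dual identity \eqref{Eq:norm1d} follows the same pattern from $\Proj_{\Spc N_{\tilde{\V \phi}}}\{u\} = 0$ and $\Lop_{\tilde{\V \phi}}^{-1\ast}u = \Lop^{-1\ast}u$ for $u \in \Spc U$ (Corollary \ref{Corol:U}). I would then dispatch the finite-dimensional equivalences \eqref{Eq:norm3} and \eqref{Eq:norm3d} by linear algebra: $\tilde{\V p} = \M C^{-1}\V p$ gives $\tilde{\V p}(g) = \M C^{-1}\V p(g)$ for every $g$, while expanding $p \in \Spc N_\Lop$ in the $\V p$-basis gives $\tilde{\V \phi}(p) = \M C\,\V \phi(p)$; invertibility of $\M C$ then yields equivalence of the Euclidean norms with constants furnished by the extremal singular values of $\M C$ and $\M C^{-1}$.

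For the global equivalences \eqref{Eq:norm2d} and \eqref{Eq:norm2} I would establish one inequality explicitly and obtain the reverse softly. On the predual, writing $\|g\|_{\tilde{\Spc X}_\Lop} = \|\Lop_{\tilde{\V \phi}}^{-1\ast}g\|_{\Spc X} + \|\tilde{\V p}(g)\|_2$, the continuity $\Lop_{\tilde{\V \phi}}^{-1\ast}: \Spc X_\Lop \toC \Spc X$ from Proposition \ref{Prop:ExtensionTheorem} bounds the first summand by a multiple of $\|g\|_{\Spc X_\Lop}$, and \eqref{Eq:norm3d} controls the second, giving $\|g\|_{\tilde{\Spc X}_\Lop} \le A'_2\|g\|_{\Spc X_\Lop}$. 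The converse I would read off the open mapping theorem: the identity is a continuous linear bijection between the Banach spaces $(\Spc X_\Lop, \|\cdot\|_{\Spc X_\Lop})$ and $(\tilde{\Spc X}_\Lop, \|\cdot\|_{\tilde{\Spc X}_\Lop})$ (same set by the first step, both complete by Theorem \ref{Theo:pre-dual2}), so its inverse is bounded and supplies $A'_1 > 0$. The native-space equivalence \eqref{Eq:norm2} then follows by passing to duals, the predual equivalence transferring with reciprocal constants, or equally by the same direct recipe using that each $\tilde\phi_n \in \Spc X_\Lop$ is a bounded functional on $\Spc X'_\Lop$.

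The main obstacle is the apparent circularity lurking in the reverse inequalities: a direct estimate of $\|g\|_{\Spc X_\Lop}$ against $\|g\|_{\tilde{\Spc X}_\Lop}$ would demand continuity of $\Lop_{\V \phi}^{-1\ast}$ on $\tilde{\Spc X}_\Lop$, which already presupposes knowledge of the target norm. I would break this with the open mapping (equivalently closed graph) theorem, whose hypotheses are secured once the set equality and the completeness of both spaces are in hand, the common continuous embedding $\Spc X_\Lop, \tilde{\Spc X}_\Lop \embedC \Spc S'(\R^d)$ guaranteeing that the graph of the identity is closed. A secondary subtlety worth flagging is that, in contrast to $\Spc N_{\V p} = \Spc N_{\tilde{\V p}}$ which coincide set-theoretically, the analysis complements $\Spc N_{\V \phi}$ and $\Spc N_{\tilde{\V \phi}}$ are distinct, so the predual set equality genuinely requires the projector decomposition rather than a mere change of basis.
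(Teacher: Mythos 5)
Your proposal is correct, and most of it runs parallel to the paper: the set equalities, the intrinsic identities \eqref{Eq:norm1} and \eqref{Eq:norm1d}, and the finite-dimensional equivalences \eqref{Eq:norm3} and \eqref{Eq:norm3d} (where the paper bounds with Frobenius norms of $\M C$ and $\M B=\M C^{-1}$ rather than singular values, an immaterial difference) follow the same lines; your predual set-equality argument via the splitting $\phi=\Proj_{\Spc N_{\tilde{\V \phi}}}\{\phi\}+(\Identity-\Proj_{\Spc N_{\tilde{\V \phi}}})\{\phi\}$ is a mild variant of the paper's, which instead uses $\Spc S(\R^d)\embedC\tilde{\Spc X}_\Lop$ together with the universality of the primary system. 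The genuine divergence is in \eqref{Eq:norm2} and \eqref{Eq:norm2d}: you prove one inequality explicitly and obtain the reverse from the open mapping (or closed graph) theorem, whereas the paper proves \emph{both} directions explicitly by symmetry. Concretely, having shown $\|f\|_{\tilde{\Spc X}'_{\Lop}}\le\bigl(1+\sum_{n}\|\tilde\phi_n\|_{\Spc X_\Lop}\bigr)\|f\|_{\Spc X'_\Lop}$ from the duality bound $|\langle\tilde\phi_n,f\rangle|\le\|\tilde\phi_n\|_{\Spc X_\Lop}\|f\|_{\Spc X'_\Lop}$, the paper simply interchanges the roles of $(\V \phi,\V p)$ and $(\tilde{\V \phi},\tilde{\V p})$ to get $\|f\|_{\Spc X'_\Lop}\le\bigl(1+\sum_n\|\phi_n\|_{\tilde{\Spc X}_\Lop}\bigr)\|f\|_{\tilde{\Spc X}'_{\Lop}}$; the only input needed is $\|\phi_n\|_{\tilde{\Spc X}_\Lop}<\infty$, which Proposition \ref{Prop:ExtensionTheorem} supplies. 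This dissolves the circularity you flag without any appeal to Baire category --- no continuity of $\Lop_{\V \phi}^{-1\ast}$ on $\tilde{\Spc X}_\Lop$ is required, only the duality estimate against the opposite-system norm. Your route is sound (completeness of $\tilde{\Spc X}_\Lop$ is indeed available from Proposition \ref{Prop:ExtensionTheorem}, and the common embedding into $\Spc S'(\R^d)$ closes the graph), but it yields non-constructive constants, whereas the paper exhibits $A_1,A_2,A'_1,A'_2$ explicitly in terms of $\|\tilde\phi_n\|_{\Spc X_\Lop}$, $\|\phi_n\|_{\tilde{\Spc X}_\Lop}$, and $\|\tilde p_n\|_{\Spc X'_\Lop}$.
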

Note that the fixed parts of the construction are
$\Spc N_\Lop=\Spc N_\V p=\Spc N_{\tilde{\V p}}$ and $\Spc U=\Lop^\ast(\Spc X)$ (see Figure \ref{Fig:Banach}), which are associated with \eqref{Eq:norm3} and \eqref{Eq:norm1d}, respectively.
On the other hand, we may have that $\Spc N_{\V \phi}\neq \Spc N_{\tilde{\V \phi}}$, 
although what distinguishes those two spaces needs to be included in
$\Spc U=\Lop^\ast(\Spc X)$ in the sense that $(\Identity-\Proj_{ \Spc N_{\tilde{\V \phi}} }) \{\phi\},(\Identity-\Proj_{ \Spc N_{{\V \phi}} }) \{\tilde\phi\}\in \Spc U$ for any $\phi \in 
\Spc N_{{\V \phi}}$ and $\widetilde \phi \in 
\Spc N_{\tilde{\V \phi}}$.
In particular, when
both $\V \phi$ and $\widetilde{\V \phi}$ are biorthogonal to the same $\V p=\tilde{\V p}$, we have that $\V p( \widetilde{\phi}_n-\phi_n)=\V 0$ for $n=1,\dots,N_0$, so that the
condition $\widetilde \phi_n \in \Spc X_\Lop$ is equivalent to
\begin{align}
\label{Eq:ExtendXstability}
\Lop_\V \phi^{-1\ast}\{\widetilde \phi_n-\phi_n\}=\Lop^{-1\ast}\{\widetilde \phi_n-\phi_n\} \in \Spc X,
\end{align}
which yields a criterion for admissibility that is 
convenient since it no longer depends on $\Lop_\V \phi^{-1\ast}$.

\begin{proof} Proposition \ref{Prop:ExtensionTheorem} ensures that underlying spaces and operators are well-defined.

\item {\em (i) Delineation as sets}\\ To avoid circularity, we assume once more that
$\Spc N_{{\V \phi}}={\rm span}\{\phi_n\}\subset \Spc S(\R^d)$ (universality condition).
We then consider the generic members $f=\Lop_{\V \phi}^{-1}w + p$ and $\tilde f=\Lop_{\tilde{\V \phi}}^{-1}w + p$ of the underlying spaces with $w\in\Spc X$ and $p \in \Spc N_{\V p}=\Spc N_{\tilde{\V p}}={\rm span}\{\tilde{p}_n\}$.
Since $\Lop\{\tilde{f}-f\}=(w-w)=0$, the two functions can only differ by a components $\tilde p=(\tilde{f}-f)\in \Spc N_{\V p}$,
which proves that $\Spc X'_\Lop=\tilde{\Spc X}'_{\Lop}$ (as a set).  
Likewise, on the side of the pre-dual space, we have that $g=\Lop^{\ast}v+\phi\in \Spc X_\Lop$ with $(v,\phi)\in (\Spc X \times \Spc N_{\V \phi})$ and
$ \Spc N_{{\V \phi}}\subset \Spc S(\R^d)$.
Now, if  $\Spc N_{\tilde{\V \phi}} 
\subset \Spc X_\Lop$, we obviously also have that $\tilde g=\Lop^{\ast}v+\tilde \phi \in \Spc X_\Lop$ for any $\tilde \phi\in \Spc N_{\tilde{\V \phi}}$ and $v \in \Spc X$,
which implies that $\tilde{\Spc X}_\Lop \subseteq \Spc X_\Lop$. Conversely, since
 $\Spc S(\R^d) \embedC \tilde{\Spc X}_\Lop$, $g=\Lop^{\ast}v+\phi \in \tilde{\Spc X}_\Lop$
 for any $(v,\phi)\in (\Spc X \times \Spc N_{\V \phi})$, 
 which yields $\Spc X_\Lop\subseteq \tilde{\Spc X}_{\Lop}$.

\item {\em (ii) Norm inequalities}\\
To get
\eqref{Eq:norm1}, we observe that
$$
\|(\Identity- \Proj_{\Spc N_{\tilde{\V p}}})\{f\} \|_{\tilde{\Spc X'_\Lop}} =\underbrace{ \|\Lop\big(f-\Proj_{\Spc N_{\tilde{\V p}}}\{f\}\big)\|_{\Spc X'}}_{\|\Lop f \|_{\Spc X'}}+\underbrace{\|\tilde{\V \phi}\big(f-\Proj_{\Spc N_{\tilde{\V p}}}\{f\}\big)\|_2}_{=0}
$$
because $\Proj_{\Spc N_{\tilde{\V p}}}\{f\}\in \Spc N_{\tilde{\V p}}$
is annihilated by $\Lop$ and $\tilde{\V \phi}\big(f-\Proj_{\Spc N_{\tilde{\V p}}}\{f\}\big)=\M 0$ by construction, irrespective of the choice of $(\tilde{\V \phi},\tilde{\V p})$. 
Likewise, the dual relation \eqref{Eq:norm1d} is a direct consequence of the form of the adjoint operator $\Op L^{-1\ast}_{\V \phi}=\Lop^{-1\ast}(\Identity-\Proj_{\Spc N_{\V \phi}})$ and the orthogonality condition
$\Proj_{\Spc N_{\V \phi}}\{u\}=0$ for all $u\in \Spc U$.

In order to estimate $\|f\|_{\tilde{\Spc X}'_{\Lop}}$, we first invoke the duality bound
$$
|\langle \tilde{\phi}_n, f\rangle|\le \|\tilde{\phi}_n\|_{\Spc X_\Lop}\; \|f\|_{\Spc X'_\Lop}
$$
with the role of $\Spc X_\Lop$ and $\tilde{\Spc X}_{\Lop}$ (resp. $\tilde{\phi}_n$ and $\phi_n$) being interchangeable.
This suggests the estimate
\begin{align}
 \|f\|_{\tilde{\Spc X}'_{\Lop}}&=\|\Lop f\|_{\Spc X'} +\left(\sum_{n=1}^{N_0}|\langle \tilde{\phi}_n,f\rangle|^2\right)^\frac{1}{2}
 \nonumber \\
 &\le \|\Lop f\|_{\Spc X'}  +\sum_{n=1}^{N_0}\|\tilde{\phi}_n\|_{\Spc X_\Lop}\|f\|_{\Spc X'_\Lop} \tag{by the triangle inequality} \nonumber \\
 & \le \left(1+\sum_{n=1}^{N_0}\|\tilde{\phi}_n\|_{\Spc X_\Lop}\right) \| f\|_{\Spc X'_\Lop}.
 \label{Eq:Normeqaux}
 \end{align}
Likewise, we have that
\begin{align*}
 \|f\|_{\Spc X'_\Lop} \le \left(1+\sum_{n=1}^{N_0}\|\phi_n\|_{\tilde{\Spc X}_{\Lop}}\right) \|f\|_{\tilde{\Spc X}'_{\Lop}}
 \end{align*}
 which, when combined with \eqref{Eq:Normeqaux}, yields \eqref{Eq:norm2} with
 \begin{align}
A_1=
\left(1+\sum_{n=1}^{N_0}\|\phi_n\|_{\tilde{\Spc X}_{\Lop}} \right)^{-1}
\quad  \mbox{ and } \quad A_2=\left(1+\sum_{n=1}^{N_0} \|\tilde{\phi}_n\|_{\Spc X_\Lop} \right).
\label{Eq:A1A2}
\end{align}

We apply a similar technique to derive \eqref{Eq:norm2d} by considering the generic element $g=\Lop^\ast v+ \phi \in \Spc X_{\Lop}$
with $v\in \Spc X$ and $\phi \in \Spc N_{\V \phi}$. It leads to
\begin{align*}
 \|g\|_{\tilde{\Spc X}_{\Lop}}=\|\Lop^\ast v+ \phi\|_{\tilde{\Spc X}_{\Lop}}&=\max(\|v\|_{\Spc X},\left(\sum_{n=1}^{N_0}|\langle \tilde{p}_n,\phi\rangle|^2\right)^\frac{1}{2})
 \\
 &\le \max(\|v\|_{\Spc X}  ,\sum_{n=1}^{N_0}\|\tilde{p}_n\|_{\Spc X'_\Lop}\|\phi\|_{\Spc X_\Lop} )
 \\
& \le \underbrace{\left(1+\sum_{n=1}^{N_0}\|\tilde{p}_n\|_{\Spc X'_\Lop}\right)}_{A_2'} \| g\|_{\Spc X_\Lop},
 \end{align*}
 where we have used the property that $\|g\|_{\Spc X_\Lop}=\max(\|v\|_{\Spc X},\|\phi\|_{\Spc X_\Lop})$.
Likewise, the complementary bounding constant $A_1'$
is obtained by substituting $\phi_n$ by $p_n$ and $\tilde{\Spc X}_{\Lop}$ by $\tilde{\Spc X}'_{\Lop}$ in the first part of \eqref{Eq:A1A2}.

As for the null-space component $p\in \Spc N_\V p$, we recall that any admissible $\tilde{\V \phi}$ must be such that
the cross-product matrix
$$
\M C 
=[ \tilde{\V \phi}( p_1) \cdots \tilde{\V \phi}(p_{N_0})] \in \R^{N_0\times N_0}
$$
has full rank for any basis $\V p$ of $\Spc N_{\V p}$. The biorthogonal basis
$\tilde{\V p}$ is then given by
$$
\tilde{\V p}=\M B \V p,
$$
where $\M B=\M C^{-1}$. The entries of these matrices are denoted by
\begin{align}
b_{m,n}&=[\M B]_{m,n}=\langle \phi_m, \tilde p_n\rangle\label{Eq:cmn}\\
c_{m,n}&=[\M C]_{m,n}=\langle \tilde \phi_m, p_n \rangle,
\end{align}
respectively, where the right-hand side of \eqref{Eq:cmn} follows from the
biorthogonality of $(\V \phi,\V p)$. Let us now consider some arbitrary $p=\sum_{n=1}^{N_0} \langle \phi_n, p\rangle p_n \in \Spc N_{\V p}$ whose
initial norm is $\|\V \phi(p)\|_2$. As we change the system of coordinates, we get
\begin{align*}
\|\tilde{\V \phi}(p)\|_2^2&=\sum_{m=1}^{N_0} \left|\sum_{n=1}^{N_0}\langle \phi_n, p\rangle \langle \tilde \phi_m, p_n\rangle\right|^2\\
&\le \sum_{m=1}^{N_0}\left(\sum_{n=1}^{N_0} |\langle \tilde \phi_m, p_n\rangle|^2\right)\left( \sum_{n=1}^{N_0} |\langle \phi_n, p\rangle|^2\right)\tag{by Cauchy-Schwarz}\\
& = \left(\sum_{m=1}^{N_0}\sum_{n=1}^{N_0} c_{m,n}^2\right) \|\V \phi(p)\|^2_2.
\end{align*}
Likewise, by interchanging the role of ${\V \phi}$ and $\tilde{\V \phi}$, we find that
\begin{align*}
\|{\V \phi}(p)\|_2^2&\le  \left(\sum_{m=1}^{N_0}\sum_{n=1}^{N_0} b_{m,n}^2\right) \|\tilde{\V \phi}(p)\|^2_2.
\end{align*}
The combination of these two inequalities yields \eqref{Eq:norm3} with
$B_2=\|\M C\|_F$ (the Frobenius norm of the matrix $\M C$) and $B_1=1/\|\M B\|_F$.

%
Similarly, we establish the norm inequality \eqref{Eq:norm3d} by constructing the estimates
\begin{align*}
\|\tilde{\V p}(u)\|_2^2=\sum_{n=1}^{N_0} \big|\langleÊ\tilde p_n, u \rangle\big|^2=\sum_{n=1}^{N_0} \left|\sum_{m=1}^{N_0} b_{m,n} \langleÊp_m, u \rangle\right|^2
 \le \|\M B\|_F^2\; \|\V p(u)\|_2^2\\
 \|{\V p}(u)\|^2_2 =\sum_{n=1}^{N_0} \big|\langleÊp_n, u \rangle\big|^2=\sum_{n=1}^{N_0} \left|\sum_{m=1}^{N_0} c_{m,n} \langleÊ\tilde p_m, u \rangle\right|^2 \le \|\M C\|_F^2\; \|\tilde{\V p}(u)\|^2_2.
\end{align*}


%
%
\end{proof}

\section{Link with classical results}

\subsection{Operator-based solution of differential equations}
\label{Sec:SODE}
The use of the regularized inverse operator $\Lop_\V \phi^{-1}$ has been proposed for the resolution
of stochastic partial differential equations of the form (see \cite{Unser2014, Fageot2013})
$$
\Lop s=w \quad \mbox{ s.t. } \quad \V \phi(s)=\V b,
$$
where $w \in \Spc S'(\R^d)$ is a realization of a $p$-admissible white-noise innovation process
and $\V b \in \R^{N_0}$ is a boundary-condition vector that may be deterministic (the typical choice being $\V b=\V 0$) or not.
Under the assumption that $\Lop_\V \phi^{-1\ast}: \Spc S(\R^d)\toC L_p(\R^d)$ with $p\ge1$, the solution is then given by $s=\Lop_\V \phi^{-1}w + \sum_{n=1}^{N_0} b_n p_n$.
The connection with the present work is that there are corresponding results available
on specific choices of $\V \phi$ that guarantee the continuity of $\Lop_\V \phi^{-1\ast}$
\cite[Chap 5]{Unser2014book}.
The scenario most studied in one dimension is $\Lop=\Dop$ with $(\phi_1,p_1)=(\delta,1)$, as it enables
the construction of the whole family of L\'evy processes \cite{Levy1954b}. These can be described as $s=\Dop^{-1}_{\delta}w$, where $w$ is a white L\'evy noise, which is compatible with the classical boundary condition $s(0)=\langle \delta, s \rangle=0$ \cite[Section 7.4, pp. 163-166]{Unser2014book}. Instead of the standard integrator $\Dop^{-1}$, which outputs the primitive of the function, the scheme uses the anti-derivative operator
$$
\Dop^{-1}_{\delta}\{\varphi\}(x)=\int_{0}^x \varphi(y)\dint y,
$$
whose adjoint $\Dop^{-1\ast}_{\delta}$ is continuous $\Spc S(\R) \toC \Spc R(\R)=\cap_{\alpha\in \Z} L_{\infty,\alpha}(\R)$ (the Fr\'echet space of rapidly decreasing functions)
\cite[Theorem 5.3, p. 100]{Unser2014book}.
Since 
$L_q(\R^d) \embedC \Spc R(\R)$ for all $q\le1$ and $\|\Dop^{-1\ast}_{\delta}\{\varphi\}\|_{L_q}\le \|\varphi\|_{L_1}$, we can readily extend the domain of continuity of the adjoint  pseudo-inverse to
$\Dop^{-1\ast}_{\delta}: L_1(\R) \toC L_q(\R^d)$. By taking 
$\Spc X'=L_p(\R)=\big(L_q(\R)\big)'$ with $p\ge1$, this then leads to 
the native Banach spaces
$$L_{p,\Dop}(\R)=\{f: \R \to \R: \|f\|_{p,\Dop}\eqdef \|\Dop f\|_{p} + |f(0)|<\infty\},$$
 which are Sobolev spaces of degree $1$. Theorem \ref{Theo:NativeSpace2} ensures that
 $\Spc S(\R) \embedC L_{p,\Dop}(\R)\embedC L_\infty(\R)$, which is consistent with the classical embedding properties of Sobolev spaces. In fact, the statement can be refined to
 $L_{p,\Dop}(\R)\embedC C_{\rm b}(\R)$ for any $p\ge 1$(see \cite{Unser2019b}).
 
 \subsection{Total variation and ${\rm BV}$}
 While the connection in Section \ref{Sec:SODE} is enlightening, it does not cover the case 
 $(\Lop,\Spc X')=\big(\Dop,\Spc M(\R)\big)$ (total variation) with $\Spc X=C_0(\R)$ because $\Dop^{-1\ast}_{\delta}\{\varphi\}$ does systematically present a discontinuity at the origin when $\langle \varphi,1\rangle\ne 0$, even though it is smooth everywhere else (see \cite[ Figure 5.1, p. 91]{Unser2014book}).
The problem is that
 $\delta \notin C_{0,\Dop}(\R)$. This can be fixed by selecting a more regular boundary functional (\ie any $\phi_1 \in L_1(\R)$ with $\langle \phi_1,1\rangle=1$) which then yields a corrected operator that is universal in the sense that $\Dop^{-1\ast}_{\phi_1}: \Spc S(\R) \toC \Spc S(\R)$.
It allows us to specify the proper native space
$$\Spc M_{\Dop}(\R)=\{f: \R \to \R \ \big| \ \| f\|_{\Spc M,\Dop}\eqdef \|\Dop f\|_{\Spc M} + |\langle \phi_1,f\rangle|<\infty\},$$
which extends ${\rm BV}(\R)$ (functions of bounded variations) slightly.
In the classical definition of ${\rm BV}(\R)$, the second term in the norm is replaced by $\|f\|_{1}$. This is more constraining as it makes the null space trivial by excluding constant signals.
In contrast with $L_{p,\Dop}(\R)$ including the limit scenario $p=1$,
the continuity of the members of $\Spc M_{\Dop}(\R)$ or ${\rm BV}(\R)$ is guaranteed only {\em almost everywhere}:
in other words, it can happen that the term $|f(0)|$ is not well-defined, which is the fundamental reason why it needs to be replaced by $|\langle \phi_1,f\rangle|$.

\subsection{Sobolev/Beppo-Levi spaces with $d=1$}

We have seen that the $N_0$th derivative operator
$\Lop=\Dop^{N_0}$ is spline-admissible with $\alpha=(N_0-1)$ and $\Spc N_{\Dop^{N_0}}=\{x^n\}_{n=0}^{N_0-1}$ (polynomials of degree $N_0-1$). 
Since we already known that $\Dop^{-1\ast}_{\delta}: L_1(\R)\toC L_p(\R)$, $p=1$ included, we can iterate the operator to construct an admissible pseudo-inverse of $\Dop^{m\ast}=(-1)^{m}\Dop^{m}$ as
$$
\Dop^{-m\ast}_{\V \phi}=\left(\Dop^{-1\ast}_{\delta}\right)^{m}: L_1(\R)\toC L_p(\R).
$$
with $N_0=m$ and $\phi_n=(-1)^{(n-1)}\delta^{(n-1)}$.
Indeed, since $f=\Dop^{-1}_{\delta}\{w\}$ imposes the boundary condition $f(0)=0$ and is left-invertible 
with $w=\Dop f$, 
$g=\left(\Dop^{-1}_{\delta}\right)^{m} \{w\}$ is invertible as well and such that
$g^{(n)}(0)=\langle (-1)^{(n-1)}\delta^{(n-1)}, g\rangle=0=\langle \phi_{n+1}, g\rangle$ for $n=0,\dots,(m-1)$
By observing that the underlying $\phi_n$ are biorthogonal to $\tilde p_n(x)=\frac{x^n}{n!}$, we can then safely define the
corresponding native spaces as
$$L_{p,\Dop^{m}}(\R)=\left\{f: \R \to \R: \|f\|_{p,\Dop^{m}}\eqdef \|\Dop^m f\|_{p} + \left(\sum_{n=0}^{m-1}\left|\frac{f^{(n)}(0)}{n!}\right|^2\right)^{1/2}<\infty\right\},$$
which, as expected, are Sobolev spaces of order $m$, albeit homogeneous extensions of the classical ones for they also includes the polynomials of degree less than $m$.
For $p=2$, we recover the typical kind of Beppo-Levi space \cite{Atteia1992} used to specify smoothing splines;
\ie the classical form of variational polynomial splines, which goes back to the pioneering works of Schoenberg and de Boor \cite{Schoenberg1964,deBoor1966}.


\subsection{Reproducing-kernel Hilbert spaces}
The best known examples of native spaces on $\R^d$ are RKHS \cite{Aronszajn1950,Berlinet2004,Wendland2005,Schaback2000}. They are included in the framework by taking $\Spc X=L_2(\R^d)$. In that case, the stability condition
$\Lop_{\V \phi}^{-1\ast}: L_{1,-\alpha}(\R^d) \toC L_2(\R^d)$ implies that $\Lop_{\V \phi}^{-1}: L_2(\R^d) \toC L_{\infty,\alpha}(\R^d)$.
This means that 
the canonical inverse operator $\Lop^{-1\ast}$ has the unique extension $\Lop^{-1\ast}=\Lop_\V \phi^{-1\ast}: \Spc U=\Lop^{\ast}\big(L_2(\R^d)\big)\toC L_2(\R^d)$
with $\|u\|_{2,\Lop}=\|\Lop^{-1\ast}u\|_{2}=\|\Lop_\V \phi^{-1\ast}u\|_{L_2}$ for any $u \in \Spc U$. This allows us to define the pair of self-adjoint operators $\Op A=(\Lop^{-1}\Lop^{-1\ast}): \Spc U \toC \Spc U'$
and $\Op A_\V \phi=(\Lop_\V \phi^{-1}\Lop_\V \phi^{-1\ast}): L_{2,\Lop}(\R^d) \toC L'_{2,\Lop}(\R^d)$.
Since $\Spc S_{\V p^\perp}(\R^d)\embedD \Spc U\embedC \Spc X_\Lop$ (by Corollary \ref{Corol:U}), we have that 
\begin{align}
\forall \varphi \in \Spc S_{\V p}(\R^d) \backslash \{ 0\}: \|\varphi\|_{2,\Lop}&=\langle \Lop^{-1\ast}\varphi,\Lop^{-1\ast}\varphi \rangle \nonumber\\
&=\langle \Op A \varphi, \varphi \rangle=\langle \Op A_\V \phi \varphi, \varphi \rangle>0.\label{Eq:Posdef}
\end{align}
This expresses the (strict) $\V p$-conditional positive definiteness of $\Op A$ (resp. $\Op A_\V \phi$)
which can also be identified as the inverse (resp. the pseudo-inverse) of $(\Lop^\ast\Lop)$.
In the particular case where $\V p$ is a basis of the polynomials of degree $n_0$, Condition \eqref{Eq:Posdef} is equivalent to the notion of $n_0$th-order conditional positive definiteness used in approximation theory \cite{Micchelli1986,Wendland2005}.
It is a classical hypothesis in the theory of (semi-)RKHS and is also necessary for our construction.
Classically, the strict ($n_0$th-order conditional) positive definiteness of $\Op A$ (or of its underlying kernel) it known to be sufficient to yield a 
(semi-)RKHS. This is not quite the case here because we also want the embedding $\Spc S(\R^d) \embedC L'_{2,\Lop}(\R^d) \embedC
L_{\infty,\alpha}(\R^d)$ that controls the growth of the members of the native space. The latter calls for the continuity of $\Lop_{\V \phi}^{-1\ast}: L_{1,-\alpha}(\R^d) \toC L_2(\R^d)$ ($L_2$-stability) or, equivalently,
of $\Op A_{\V \phi} 
: L_{1,-\alpha}(\R^d) \toC L_{\infty,\alpha}(\R^d)$.
\subsection{Connections with kernel methods and splines}
In \cite{Unser2019b}, we shall identify a simple condition on the kernel of $\Op A=(\Lop\Lop^\ast)^{-1}$ that ensures that the stability requirement in Definition \ref{Def:Hypotheses} for $\Spc X=L_2(\R^d)$ is met. This will enable us to
prove that the combination of spline admissibility in Definition \ref{Def:splineadmis3} and the classical (conditional-)positivity requirement \eqref{Eq:Posdef} are necessary and sufficient for the native space of $(\Spc X',\Lop)$ with $\Spc X=\Spc X'=L_2(\R^d)$
to be a RKHS, with the property that $\Spc S(\R^d) \embedC L'_{2,\Lop}(\R^d) \embedC C_{{\rm b},\alpha}(\R^d)$.
We shall further the argument by reformulating the primary results of the present paper in terms of kernels, rather than operators. This will provide us with explicit criteria for checking that the compatibility conditions in Definition \ref{Def:Hypotheses} are met for a broad variety of primary spaces $\Spc X'$. 
We shall also devote a particular attention to the case $\Spc X'=\Spc M(\R^d)$, which is central to the theory of $\Lop$-splines \cite{Unser2017}.
Examples of applications of our native Banach-space formalism, including the resolution of variational inverse problems and the derivation of representer theorems, will be presented in \cite{Fageot2019a}.

%

\pagebreak
\appendix

\section*{Appendix A: Topological embeddings}
\label{App:Embeddings}

The notion of embedding for topological vector spaces comes in four gradation: inclusion as a set (symbolized by $\Spc X\subseteq\Spc Y$),
continuous embedding ($\Spc X \embedC \Spc Y$), isometric embedding ($\Spc X \embedIso \Spc Y$), and, finally, continuous and dense embedding ($\Spc X\embedD\Spc Y$).

\begin{definition}[Continuous embedding]
\label{Def:embed}
Let $\Spc X$ and $\Spc Y$ be two locally convex topological vector spaces where $\Spc X\subseteq \Spc Y$ (as a set).
$\Spc X$ is said to be continuously embedded in $\Spc Y$, which is denoted by $\Spc X \embedC \Spc Y$, if the inclusion/identity map
$
\Op I: \Spc X \to \Spc Y: x \mapsto x
$
is continuous. 
\end{definition}
In particular, if $\Spc X$ and $\Spc Y$ are two Banach spaces, then the definition can be restated as:
for all $x \in \Spc X$, $\Op I\{x\}=x \in \Spc Y$ with $\|x\|_{\Spc Y} \le C_0 \|x\|_{\Spc X}$ for some constant $C_0>0$.
If, in addition,
$\|x\|_{\Spc X}=\|x\|_{\Spc Y}$ for all $x \in \Spc X \subseteq \Spc Y$, then the embedding is isometric, a property that is denoted by $\Spc X \embedIso \Spc Y$.
For instance, a classical result is that any Banach space $\Spc X$ is isometrically embedded in its bidual; \ie $\Spc X \embedIso \Spc X''$. In fact, we have that $\Spc X=\Spc X''$ (meaning that the two spaces are isometrically isomorphic) if and only if $\Spc X$ is reflexive.

An example of such embeddings that is relevant to this paper is 
$$\Spc S(\R^d) \embedC L_{1,-\alpha}(\R^d) \embedIso \big(L_{1,-\alpha}(\R^d)\big)''=(L_{\infty,\alpha}(\R^d)'.
$$

So far we have emphasized the property of continuity, but there are instances such as $\Spc S(\R^d) \embedC L_{1,-\alpha}(\R^d)$ that are more powerful because the embedding also happens to be dense; \ie $\Spc X \embedD \Spc Y$, where $\Spc Y$ can be specified as the completion of $\Spc X$ for $\|\cdot\|_{\Spc Y}$.

\begin{definition}[Dense embedding]
\label{Def:Dense}
Let  $\Spc X$ be a linear subspace of a locally convex topological vector space $\Spc Y$. Then, $\Spc X$ is said to be dense in $\Spc Y$ if it has the ability to separate distinct elements of the dual space $\Spc Y'$; that is, if, for any $y' \in \Spc Y'$,
$$\langle y',x\rangle_{\Spc Y' \times \Spc Y}=0 \mbox{ for all } x\in \Spc X\subseteq \Spc Y \ \ \Leftrightarrow \ \ y'=0.$$
\end{definition}

In the case where $\Spc Y$ is a Banach space, the denseness of $\Spc X$ has another equivalent formulation:
for any $y\in \Spc Y$ and $\epsilon>0$, there exists some $x_\epsilon \in \Spc X$ such that
$\|y-x_\epsilon\|_{\Spc Y}< \epsilon$, which means that $\Spc X$ is rich enough to represent any element
of $\Spc Y$ with an arbitrary degree of precision.

\begin{theorem}[Dual embedding] 
\label{Theo:Dense}
 Let  $\Spc X$ and $\Spc Y$ be two locally convex topological vector spaces
 such that  $\Spc X \embedD\Spc Y$, where the embedding is continuous and dense. Then,
 $\Spc Y' \embedC \Spc X'$. Moreover,  if $\Spc X$ is reflexive, then $\Spc Y'\embedD \Spc X'$.
 
Likewise, in the case of Banach spaces, $\Spc X \embedIso\Spc Y$ implies that $\Spc Y' \embedC\Spc X'$ with bounding constant one (preservation of norm). Moreover, if there exists a topological vector space $\Spc S$ such that 
$\Spc S\embedD \Spc X$ and $\Spc S\embedD \Spc Y$,
then $\Spc Y'\embedIso \Spc X'$.

\end{theorem}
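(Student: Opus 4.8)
The plan is to realize every dual embedding through a single object, namely the transpose $\Op{I}^{\ast}\colon \Spc Y' \to \Spc X'$ of the inclusion/identity map $\Op{I}\colon \Spc X \to \Spc Y$ that witnesses $\Spc X \embedC \Spc Y$. This transpose is nothing but the restriction map $\Op{I}^{\ast}\colon y' \mapsto \left. y'\right|_{\Spc X}$, and its continuity is automatic, since the transpose of a continuous linear map between locally convex spaces is again continuous (and, in the Banach case, $\|\Op{I}^{\ast}\| = \|\Op{I}\|$). The only content that separates a genuine \emph{embedding} $\Spc Y' \embedC \Spc X'$ from a mere continuous map is the injectivity of $\Op{I}^{\ast}$: we must be able to identify each $y'\in\Spc Y'$ with its restriction $\left. y'\right|_{\Spc X}$ without loss of information. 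This is exactly where denseness will be used, and it is where I expect the only real subtlety to lie.

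First I would establish the continuous embedding under the hypothesis $\Spc X \embedD \Spc Y$. If $\left. y'\right|_{\Spc X}=0$, that is $\langle y', x\rangle = 0$ for all $x \in \Spc X$, then Definition \ref{Def:Dense} applied to the dense subspace $\Spc X \subseteq \Spc Y$ forces $y'=0$; hence $\Op{I}^{\ast}$ is injective and we may legitimately read $\Spc Y' \subseteq \Spc X'$ as a set, so that $\Spc Y' \embedC \Spc X'$. For the reflexive refinement I would verify that this embedding is moreover dense, again by the criterion of Definition \ref{Def:Dense}, now applied to the pair $(\Spc Y', \Spc X')$: I must show that any $\Lambda \in (\Spc X')'$ with $\langle \Lambda, y'\rangle = 0$ for every $y'\in\Spc Y'$ vanishes. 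Reflexivity of $\Spc X$ gives $(\Spc X')'=\Spc X$, so $\Lambda$ corresponds to some $x\in\Spc X \subseteq \Spc Y$ and the condition reads $\langle y', x\rangle = 0$ for all $y'\in\Spc Y'$; since the continuous dual $\Spc Y'$ separates the points of the (Hausdorff) locally convex space $\Spc Y$, this forces $x=0$, whence $\Lambda=0$ and $\Spc Y'\embedD\Spc X'$.

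For the Banach statements I would argue at the level of norms. The isometric hypothesis $\Spc X \embedIso \Spc Y$ gives $\|x\|_{\Spc X}=\|x\|_{\Spc Y}$ for $x\in\Spc X$, so $\|\left. y'\right|_{\Spc X}\|_{\Spc X'} = \sup_{x\in\Spc X,\ \|x\|_{\Spc Y}\le 1}|\langle y', x\rangle| \le \|y'\|_{\Spc Y'}$, which is the announced preservation of norm with bounding constant one. For the final assertion, the common dense subspace $\Spc S$ supplies the two missing ingredients simultaneously: denseness of $\Spc S$ in $\Spc Y$ lets me compute $\|y'\|_{\Spc Y'}=\sup_{s\in\Spc S,\ \|s\|_{\Spc Y}\le 1}|\langle y', s\rangle|$, and since $\Spc S \subseteq \Spc X$ with $\|s\|_{\Spc Y}=\|s\|_{\Spc X}$ this supremum is dominated by $\|\left. y'\right|_{\Spc X}\|_{\Spc X'}$, giving the reverse inequality and hence the isometry $\|\left. y'\right|_{\Spc X}\|_{\Spc X'}=\|y'\|_{\Spc Y'}$; moreover $\left. y'\right|_{\Spc X}=0$ implies $\left. y'\right|_{\Spc S}=0$, so $y'=0$ by denseness of $\Spc S$ in $\Spc Y$, which secures injectivity. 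Together these yield $\Spc Y'\embedIso\Spc X'$.

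The main obstacle I anticipate is not any single estimate but the bookkeeping of injectivity. The purely isometric hypothesis by itself does \emph{not} make $\Op{I}^{\ast}$ injective---a non-dense isometric embedding genuinely loses dual information---so the abstract notation $\Spc Y'\embedC\Spc X'$ is only justified once denseness is invoked, either of $\Spc X$ in $\Spc Y$ or of the auxiliary common subspace $\Spc S$ (the role played by $\Spc S(\R^d)$ throughout the paper). Keeping precise track of exactly where this denseness is needed, and making ``continuity of the transpose'' rigorous in the general, possibly non-normable, locally convex setting, is the delicate part of the argument.
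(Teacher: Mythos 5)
Your proof is correct and follows essentially the same route as the paper's: both realize the dual embedding as the adjoint/restriction of the inclusion map, use denseness of $\Spc X$ in $\Spc Y$ to secure injectivity, derive the reflexive case from the fact that $\Spc Y'$ separates the points of $\Spc Y$ (you argue by contrapositive, the paper by contradiction), and obtain the Banach isometries by restricting the supremum in the dual norm to $\Spc X$ or to the common dense subspace $\Spc S$. Your explicit remark that the bare isometric hypothesis alone does not make the restriction map injective---so that denseness is genuinely needed before one may write $\Spc Y'\embedC\Spc X'$ as an inclusion---is a careful point that the paper's proof passes over silently.
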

\begin{proof} Since $\Spc X\embedC\Spc Y$, the linear functionals that are continuous on $\Spc Y$ are also continuous on $\Spc X$ so that $\Spc X'\subseteq\Spc Y'$ (as a set). Moreover, the continuity of the identity/inclusion map $\Op I: \Spc X \to \Spc Y$ implies the continuity of its adjoint 
$\Op I^\ast: \Spc Y' \to \Spc X'$, defined as 
\begin{align}
\langle \Op I^\ast \{y'\}, x\rangle_{\Spc X' \times \Spc X}=\langle y', \Op I\{x\}\rangle_{\Spc Y' \times \Spc Y}=\langle y', x\rangle_{\Spc Y' \times \Spc Y}
\label{Eq:iast}
\end{align}
for all $y' \in \Spc Y', x \in \Spc X$. Finally, the denseness of $\Spc X$ in $\Spc Y$ ensures that $\Op I^\ast$ is the correct inclusion map with $\Op I^\ast \{y'\}=y'$,
 which proves that $\Spc Y' \embedC \Spc X'$. 

\item {\em Second part by contradiction}:
 Suppose that $\Spc Y'$ is not dense in $\Spc X'$. Then, there is an $x_0^{\prime\prime}\in \Spc X^{\prime\prime}$ that is not identically zero such that 
$\langle x_0^{\prime\prime}, y^{\prime}\rangle_{\Spc X^{\prime\prime}\times\Spc X'}=0$
for all $y^{\prime} \in \Spc Y'\subseteq \Spc X'$ (contrapositive of the statement in Definition  \ref{Def:Dense}). Moreover, due to the reflexivity of $\Spc X$, there is a corresponding $x_0 \in \Spc X \embedC \Spc Y$ such that $\Op B\{x_0\}=x_0^{\prime\prime}$, where $\Op B: \Spc X\to \Spc X''$ is the canonical bijective mapping for a reflexive space to its  bidual. Therefore,
\begin{align*}
	0
	&=
	\langle x_0^{\prime\prime},y^{\prime}\rangle_{\Spc X^{\prime\prime}\times\Spc X'}=
	\langle \Op B\{x_0\},y^{\prime}\rangle_{\Spc X^{\prime\prime}\times\Spc X'}\nonumber \\
	&=
	\langle y^{\prime},x_0\rangle_{\Spc X'\times \Spc X}=\langle y^{\prime},x_0\rangle_{\Spc Y'\times \Spc Y},
\end{align*}
for all $y' \in \Spc Y'$. Since the topological spaces $\Spc Y'$ and $\Spc Y$ form a dual pair, the identity $\langle y^{\prime},x_0\rangle_{\Spc Y'\times \Spc Y}=0$ implies that $x_0=0$, which is a contradiction.

\item {\em Banach Isometries}: From the definition of the dual norm and the property that
$\|\varphi\|_{\Spc X}=\|\varphi\|_{\Spc Y}$ when $\varphi \in \Spc X$, for any $y' \in \Spc Y'\subseteq \Spc X'$, we have that
\begin{align}
\label{Eq:Normembeds}
\|y'\|_{\Spc Y'}=\sup_{\varphi \in \Spc Y\backslash \{0\}} \frac{\langle  y,\varphi\rangle }{\|\varphi\|_{\Spc Y}} \ge \sup_{\varphi \in \Spc X\backslash \{0\}} \frac{\langle  y,\varphi\rangle }{\|\varphi\|_{\Spc Y}}=\sup_{\varphi \in \Spc X\backslash \{0\}} \frac{\langle  y,\varphi\rangle }{\|\varphi\|_{\Spc X}}
=\|y'\|_{\Spc X'}
\end{align}
where the inequality results from the search space $\Spc X$ on the right being a subset of $\Spc Y$; hence, 
$\Spc X' \embedC \Spc Y'$. In the second scenario, the two search spaces in \eqref{Eq:Normembeds} can be replaced by the dense subspace $\Spc S$, which then yields an equality. However, this setting also implies that $\Spc X=\Spc Y$ as both spaces are the unique completion of $\Spc S$ in the $\|\cdot\|_{\Spc X}=\|\cdot\|_{\Spc Y}$-norm.

\end{proof}

As demonstration of usage, we 
now prove the following remarkable result in Schwartz' theory of distributions:

\begin{proposition}
\label{Prop:Sdense}
$\Spc S(\R^d)$ is continuously and densely embedded in $\Spc S'(\R^d)$; \ie $\Spc S(\R^d) \embedD \Spc S'(\R^d)$.
\end{proposition}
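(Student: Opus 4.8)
The plan is to treat continuity and density separately, the latter being the substantive part. For continuity, I would use the identification of each $\varphi\in\Spc S(\R^d)$ with the regular tempered distribution $T_\varphi:\psi\mapsto\int_{\R^d}\varphi(\V x)\psi(\V x)\dint\V x$ recalled in Section \ref{Sec:Prelinaries}, which already gives $\Spc S(\R^d)\subseteq\Spc S'(\R^d)$ as sets. Equipping $\Spc S'(\R^d)$ with its usual (strong) dual topology, generated by the seminorms $T\mapsto\sup_{\psi\in B}|\langle T,\psi\rangle|$ indexed by the bounded sets $B\subset\Spc S(\R^d)$, I would observe that every $\psi$ in such a $B$ is uniformly dominated by finitely many Schwartz seminorms, so that a single Schwartz seminorm of $\varphi$ controls $\sup_{\psi\in B}|\int_{\R^d}\varphi(\V x)\psi(\V x)\dint\V x|$. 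This seminorm estimate yields the continuity of the inclusion map, \ie $\Spc S(\R^d)\embedC\Spc S'(\R^d)$.

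For density, I would invoke Definition \ref{Def:Dense} with $\Spc X=\Spc S(\R^d)$ and $\Spc Y=\Spc S'(\R^d)$, thereby reducing the claim to the separation property: any continuous functional on $\Spc S'(\R^d)$ that annihilates the embedded copy of $\Spc S(\R^d)$ must vanish. The decisive input is the reflexivity of Schwartz space (already used in the proof of Proposition \ref{Prop:SchwartzBanach}), which identifies the dual of $\Spc S'(\R^d)$ with $\Spc S(\R^d)$ itself. Hence an arbitrary functional $y'\in\big(\Spc S'(\R^d)\big)'$ is a test function $\varphi\in\Spc S(\R^d)$ acting on $T\in\Spc S'(\R^d)$ through the canonical pairing $\langle T,\varphi\rangle$. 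By the compatibility of the duality products, for $x\in\Spc S(\R^d)$ regarded as the distribution $T_x\in\Spc S'(\R^d)$ one gets $\langle T_x,\varphi\rangle=\int_{\R^d}x(\V x)\varphi(\V x)\dint\V x$. The hypothesis that this vanishes for every such $x$, specialized to $x=\varphi$ (legitimate since $\varphi$ is real-valued), forces $\int_{\R^d}\varphi(\V x)^2\dint\V x=0$ and hence $\varphi=0$. This establishes the separation criterion and yields $\Spc S(\R^d)\embedD\Spc S'(\R^d)$.

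The routine part is the continuity estimate. The step requiring care is the identification of $\big(\Spc S'(\R^d)\big)'$ with $\Spc S(\R^d)$, together with the compatibility of the pairings---namely, that the abstract functional $\varphi$ evaluated on the embedded distribution $T_x$ collapses to the concrete integral $\int_{\R^d}x(\V x)\varphi(\V x)\dint\V x$. Once this reduction is secured, the separation condition becomes the elementary fact that $L_2$-orthogonality of a test function to all test functions (in particular to itself) forces it to vanish; without this reduction one is left with an abstract annihilation statement that cannot be resolved directly.
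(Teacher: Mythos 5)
Your proposal is correct and follows essentially the same route as the paper: inclusion via the regular-distribution identification, continuity of the identity map (which the paper simply declares obvious while you spell out the seminorm estimate), and density via the annihilator argument using $\big(\Spc S'(\R^d)\big)'=\Spc S(\R^d)$ and the specialization $x=\varphi$ to get $\|\varphi\|_{L_2}=0$, hence $\varphi=0$. No substantive differences.
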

\begin{proof} For any $\phi \in \Spc S(\R^d)$, the map $\varphi \mapsto \langle \phi, \varphi\rangle=\int_{\R^d} \phi(\V x)\varphi(\V x) \dint \V x$ specifies a continuous linear functional over $\Spc S(\R^d)$, which already shows that $\Spc S(\R^d)\subseteq \Spc S'(\R^d)$ (as a set).
To prove that the embedding is continuous, we invoke the continuity of the identity operator $\Op I: \Spc S(\R^d) \to \Spc S'(\R^d)$, which is obvious from the underlying topology.
As for the denseness property, the relevant annihilator space is
$$
\Spc S^\perp=\{\varphi \in \big( \Spc S'(\R^d)\big)'=\Spc S(\R^d): \langle \varphi, \phi\rangle=0 \mbox{ for all } \phi \in \Spc S(\R^d)\}.
$$
In particular, if $\varphi \in \Spc S^\perp\subseteq \Spc S(\R^d)$, then $\langle \varphi, \varphi\rangle=0 \Leftrightarrow \|\varphi\|_{L_2}=0 \Leftrightarrow \varphi=0$,
which proves that $ \Spc S^\perp=\{0\}$.
\end{proof}

In practice, it is often easier to prove that an embedding is continuous than establishing its denseness.
Fortunately, it is possible to transfer such properties by taking advantage
of functional hierarchies.
\begin{proposition}[Hierarchy of dense embeddings]
\label{Prop:Hierarchy}
Let $\Spc X$, $\Spc Y$, and $\Spc Z$ be three locally convex topological vector spaces such that $\Spc X \embedC \Spc Y \embedC \Spc Z$
(continuous embeddings). Then, we have the following implications (dense embeddings).
\begin{enumerate}
\item $\Spc X \embedD\Spc Y$ and $\Spc Y \embedD \Spc Z$ $\ \RightarrowÊ\ $ $\Spc X\embedD\Spc Z$ \quad 
\item $\Spc X \embedD\Spc Z$ $\ \RightarrowÊ\ $ $\Spc Y \embedD\Spc Z$.
\end{enumerate}
\end{proposition}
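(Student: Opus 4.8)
The plan is to reduce both implications to the dual-pairing characterization of denseness in Definition~\ref{Def:Dense}: a subspace is dense precisely when the only continuous functional annihilating it is the zero functional. In each case the continuity of the target embedding comes for free --- $\Spc X \embedC \Spc Z$ follows by transitivity from $\Spc X \embedC \Spc Y \embedC \Spc Z$, and $\Spc Y \embedC \Spc Z$ is a hypothesis --- so only the denseness has to be shown, and I would argue entirely on the dual side. The second implication is immediate: given $z' \in \Spc Z'$ with $\langle z', y\rangle_{\Spc Z' \times \Spc Z}=0$ for all $y \in \Spc Y$, the set inclusion $\Spc X \subseteq \Spc Y$ (from $\Spc X \embedC \Spc Y$) shows that $z'$ in particular annihilates every $x \in \Spc X$, whence $\Spc X \embedD \Spc Z$ together with Definition~\ref{Def:Dense} forces $z'=0$; thus $\Spc Y \embedD \Spc Z$.

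For the first implication I would take $z' \in \Spc Z'$ with $\langle z', x\rangle_{\Spc Z' \times \Spc Z}=0$ for all $x \in \Spc X$ and aim to conclude $z'=0$. The one substantive step is to view $z'$ as an element of $\Spc Y'$: applying Theorem~\ref{Theo:Dense} to the dense embedding $\Spc Y \embedD \Spc Z$ yields $\Spc Z' \embedC \Spc Y'$, realised by the restriction map, together with the compatibility of duality products $\langle z', y\rangle_{\Spc Z' \times \Spc Z}=\langle z', y\rangle_{\Spc Y' \times \Spc Y}$ for every $y \in \Spc Y$ --- exactly the identity~\eqref{Eq:iast} from the proof of that theorem. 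Restricting to $x \in \Spc X \subseteq \Spc Y$ shows that $z'$, now regarded in $\Spc Y'$, annihilates $\Spc X$; invoking $\Spc X \embedD \Spc Y$ via Definition~\ref{Def:Dense} gives $z'=0$ in $\Spc Y'$, and the injectivity built into $\Spc Z' \embedC \Spc Y'$ transports this back to $z'=0$ in $\Spc Z'$. Hence $\Spc X \embedD \Spc Z$.

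The only obstacle is bookkeeping rather than analysis: one must verify that a functional continuous on $\Spc Z$ may legitimately be restricted to a functional continuous on $\Spc Y$ and that the two pairings agree on $\Spc X$, so that the denseness of $\Spc X$ in $\Spc Y$ can be applied. This is precisely what Theorem~\ref{Theo:Dense} guarantees once $\Spc Y \embedD \Spc Z$ is in hand; moreover no reflexivity is needed, since only the continuous (not the dense) part of the dual-embedding conclusion enters. Threading the two denseness hypotheses through the reversed chain $\Spc Z' \subseteq \Spc Y' \subseteq \Spc X'$ is then routine.
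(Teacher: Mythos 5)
Your proof is correct. For Statement 2 you argue exactly as the paper does: the annihilator of $\Spc Y$ in $\Spc Z'$ is contained in that of $\Spc X$, so triviality of the latter forces triviality of the former. For Statement 1, however, you take a genuinely different route. The paper stays on the primal side and argues by transitivity of closures: the closure of $\Spc X$ in the topology of $\Spc Z$ contains $\Spc Y$ (since $\Spc X$ is dense in $\Spc Y$ and $\Spc Y\embedC\Spc Z$), hence contains $\overline{\Spc Y}=\Spc Z$. That argument is shorter, but it tacitly identifies the closure notion of denseness with the annihilator criterion of Definition \ref{Def:Dense}, an equivalence that rests on Hahn--Banach in locally convex spaces. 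You instead work entirely on the dual side with the annihilator definition as literally stated: you use $\Spc Y\embedD\Spc Z$ and Theorem \ref{Theo:Dense} to restrict a functional $z'\in\Spc Z'$ annihilating $\Spc X$ to an element of $\Spc Y'$ with compatible pairing, kill it there using $\Spc X\embedD\Spc Y$, and then return to $\Spc Z'$ via the injectivity of the restriction map (which is itself a consequence of the denseness of $\Spc Y$ in $\Spc Z$). Your remark that only the continuity half of Theorem \ref{Theo:Dense} is needed, so no reflexivity enters, is also correct. The trade-off is that your version is slightly longer but more faithful to the paper's chosen definition of denseness, while the paper's version is more elementary once the closure/annihilator equivalence is granted.
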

\begin{proof}
\item Statement 1: $\Spc X \subseteq \Spc Y \subseteq \Spc Z$ as sets.
Since the closure of $\Spc Y$ in the topology of $\Spc Z$ is $\Spc Z$,
the closure of $\Spc X$ in the topology of $\Spc Z$ must also be $\Spc Z$.

\item Statement 2: The annihilators of $\Spc X\subseteq \Spc Z$ and  $\Spc Y\subseteq \Spc Z$ in $\Spc Z'$ are
\begin{align*}
\Spc X^\perp =\{u\in \Spc Z': \langle u,x\rangle_{\Spc Z' \times \Spc Z}=0 \mbox{ for all } x\in \Spc X\}\\
\Spc Y^\perp =\{u\in \Spc Z': \langle u,y\rangle_{\Spc Z' \times \Spc Z}=0 \mbox{ for all } y\in \Spc Y\}
\end{align*}
with the property that $\Spc Y^\perp \subseteq \Spc X^\perp$ (as a set) from the definition. Hence,
$\Spc X^\perp=\{0\} \Rightarrow \Spc Y^\perp=\{0\}$.
%
%
\end{proof}
\section*{Appendix B: Direct-sum topology}
\begin{definition}
Let $\Spc U$ and $\Spc V$ be two subspaces of a linear space $\Spc W$.
Then, the sum space is $\Spc U+\Spc V=\{f=u+v: (u,v) \in \Spc U \times \Spc V\}\subset \Spc W$.
The sum is called direct and is notated $\Spc U \oplus \Spc V$ if
$\Spc U \cap \Spc V=\{0\}$. 
\end{definition}
If $\ \Spc U$ and $\Spc V$ in the above definition are normed with
respective norms $\|\cdot\|_{\Spc U}$ and $\|\cdot\|_{\Spc V}$, then
$\|(\|u\|_{\Spc U},\|v\|_{\Spc V})\|_p$ is a norm for both $\Spc U \times \Spc V$ and
 $\Spc U \oplus \Spc V$. The choice of the exponent $p\ge1$ in the composite norm is flexible since all finite-dimensional norms are equivalent.
One also defines the corresponding linear projection operators $\Proj_{\Spc U}: (\Spc U \oplus \Spc V) \to \Spc U$ and $\Proj_{\Spc V}: (\Spc U \oplus \Spc V) \to\Spc V$. These are such that, for any $(u,v)\in \Spc U \times \Spc V$,
\begin{align}
\Proj_{\Spc U}\{u+v\}=u,\\
\Proj_{\Spc V}\{u+v\}=v.
\end{align}
In summary, any $f \in  \Spc U \oplus \Spc V$ has a unique decomposition as 
$f=u+v$ with $\Proj_{\Spc U}\{f\}=u \in \Spc U$ and $\Proj_{\Spc V}\{f\}=v \in \Spc V$, while
$\|f\|_{\Spc U \oplus \Spc V}= \|(\|\Proj_{\Spc U}\{f\}\|_{\Spc U} ,\|\Proj_{\Spc V}\{f\}\|_{\Spc V})\|_p$. 

The concept is also applicable to Banach spaces, which are often specified explicitly as the completion of some normed space. This normed space is called a pre-Banach space when it is not yet completed.

\begin{theorem}[Completion of a direct sum space]
\label{Theo:DirectSum}
Let $\Spc U_{\rm pre} \oplus \Spc V_{\rm pre}$ be the direct sum of two normed spaces
$(\Spc U_{\rm pre},\|\cdot\|_{\Spc U})$ and $(\Spc V_{\rm pre},\|\cdot\|_{\Spc V})$.
Then, $\overline{\Spc U_{\rm pre} \oplus \Spc V_{\rm pre}}=
\Spc U \oplus \Spc V$, where $\Spc U=\overline{(\Spc U_{\rm pre},\|\cdot\|_{\Spc U})}$ and $\Spc V=\overline{(\Spc V_{\rm pre},\|\cdot\|_{\Spc V})} $ are the Banach spaces associated with the $\|\cdot\|_{\Spc U}$-norm and $\|\cdot\|_{\Spc V}$-norm, respectively.
Moreover, the resulting projection operators 
$\Proj_{\Spc U}: \Spc U \oplus \Spc V \toC \Spc U$ with $\|\Proj_{\Spc U}\|=1$ and $\Proj_{\Spc V}: \Spc U \oplus \Spc V \toC \Spc V$ with $\|\Proj_{\Spc V}\|=1$ are the unique continuous extensions
of $\Proj_{\Spc U_{\rm pre}}: \Spc U_{\rm pre} \oplus \Spc V_{\rm pre} \to \Spc U$ and $\Proj_{\Spc V_{\rm pre}}: \Spc U_{\rm pre} \oplus \Spc V_{\rm pre} \to \Spc V$, respectively.
\end{theorem}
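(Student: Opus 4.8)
The plan is to reduce the statement to the elementary fact that completion commutes with finite products of normed spaces carrying a $p$-composite norm, and then to transport this identification through the canonical isometry relating a direct sum to the corresponding product. Throughout, write $f=u+v$ for the generic element of $\Spc U_{\rm pre}\oplus\Spc V_{\rm pre}$ and $\Spc W\eqdef\overline{\Spc U_{\rm pre}\oplus\Spc V_{\rm pre}}$ for its completion.

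First I would record the quantitative continuity of the two coordinate projections. Since the composite norm is $\|f\|=\|(\|u\|_{\Spc U},\|v\|_{\Spc V})\|_p$ with $p\ge 1$, one has $\|u\|_{\Spc U}\le\|f\|$ and $\|v\|_{\Spc V}\le\|f\|$, with equality whenever the complementary component vanishes. Hence $\Proj_{\Spc U_{\rm pre}}$ and $\Proj_{\Spc V_{\rm pre}}$ are bounded with operator norm exactly $1$. Because the targets $\Spc U$ and $\Spc V$ are complete by construction, the B.L.T. theorem (Theorem \ref{Theo:BLT}) supplies unique norm-preserving extensions $\Proj_{\Spc U},\Proj_{\Spc V}\colon\Spc W\toC\Spc U,\Spc V$, which already establishes the ``moreover'' clause.

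Next I would identify $\Spc W$ itself. By the very definition of the composite norm, the map $f\mapsto(\Proj_{\Spc U_{\rm pre}}f,\Proj_{\Spc V_{\rm pre}}f)$ is an isometric isomorphism of $\Spc U_{\rm pre}\oplus\Spc V_{\rm pre}$ onto the product normed space $(\Spc U_{\rm pre}\times\Spc V_{\rm pre},\|\cdot\|_p)$. The crux is that completion commutes with finite products: a sequence $(u_i,v_i)$ is Cauchy for $\|\cdot\|_p$ if and only if $(u_i)$ and $(v_i)$ are separately Cauchy, precisely because each factor norm is squeezed between $\|\cdot\|_p$ and itself; its limit then exists in $\Spc U\times\Spc V$, while conversely $\Spc U_{\rm pre}\times\Spc V_{\rm pre}$ is dense in the complete space $\Spc U\times\Spc V$. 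Transporting the isometry back through continuity, I obtain an isometric isomorphism $\Phi\colon\Spc W\toIso\Spc U\times\Spc V$ whose two coordinate maps are exactly the extended projections $\Proj_{\Spc U},\Proj_{\Spc V}$.

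Finally I would realise this product as an internal direct sum. Under $\Phi$, the closed complementary subspaces $\Spc U\times\{0\}$ and $\{0\}\times\Spc V$ pull back to closed subspaces of $\Spc W$ that meet only in $\{0\}$ and sum to $\Spc W$; identifying each with $\Spc U$ and $\Spc V$ respectively yields $\Spc W=\Spc U\oplus\Spc V$ with the advertised projections. The one point demanding care—and the step I expect to be the main obstacle—is the separation of the two summands in the limit: one must verify that a Cauchy sequence lying in $\Spc U_{\rm pre}$ (with vanishing $\Spc V$-component) cannot develop a nonzero $\Spc V$-part after completion. This is exactly guaranteed by the continuity of $\Proj_{\Spc V}$ established in the first step, which forces $\Proj_{\Spc V}w=\lim_i\Proj_{\Spc V_{\rm pre}}u_i=0$; symmetrically for the other factor. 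All the remaining identities ($\Proj_{\Spc U}+\Proj_{\Spc V}=\Identity$, idempotency, and uniqueness of the decomposition) then follow by passing the corresponding relations on the dense subspace $\Spc U_{\rm pre}\oplus\Spc V_{\rm pre}$ to the limit.
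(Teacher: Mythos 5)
Your proposal is correct and follows essentially the same route as the paper's proof: the coordinate projections are contractive, so Cauchy sequences in the direct sum split into coordinatewise Cauchy sequences (your ``completion commutes with finite products'' step is exactly the paper's Cauchy-sequence argument), the B.L.T.\ theorem extends the projectors with norm one, and the extended projectors certify that the sum of the completions remains direct. The only difference is cosmetic---you route the identification through the external product $\Spc U\times\Spc V$ and pull back, whereas the paper works internally---and your explicit flagging of the separation issue matches the paper's final step showing $\Proj_{\Spc U}\{u+v\}=u$ and $\Spc U\cap\Spc V=\{0\}$.
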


\begin{proof}
Let $\Spc G_{\rm pre}=\Spc U_{\rm pre} \oplus \Spc V_{\rm pre}$, whose completion with respect to the
direct-sum norm is the Banach space $\Spc G$.
We then
pick some Cauchy sequence $(g_i)$ in $\Spc G_{\rm pre}$ that
converges to $g\in \Spc G$. 
Because the projectors $\Proj_{\Spc U_{\rm pre}}: \Spc P_{\rm pre} \to \Spc U_{\rm pre}$ and $\Proj_{\Spc V_{\rm pre}}: \Spc P_{\rm pre} \to \Spc V_{\rm pre}$ are contractive maps, the transformed sequences $(u_i)=(\Proj_{\Spc U_{\rm pre}} \{g_i\})$ and $(v_i)=(\Proj_{\Spc V_{\rm pre}} \{g_i\})$ are Cauchy in
$\Spc U_{\rm pre}$ and $\Spc V_{\rm pre}$, respectively, and converge to some limits
$u=\lim_{i\to\infty}u_i \in \Spc U \eqdef \overline{\Spc U}_{\rm pre}$ and $v=\lim_{i\to\infty}v_i \in \Spc V \eqdef \overline{\Spc V}_{\rm pre}$. Hence, $u+v \in \Spc U + \Spc V$.
Conversely, because of the direct sum property, the sum of any two Cauchy sequences in $\Spc U_{\rm pre}$ and $\Spc V_{\rm pre}$
is Cauchy in $\Spc G_{\rm pre}$ converges to $\lim_{i\to\infty}(u_i+v_i)=
\lim_{i\to\infty}u_i+\lim_{i\to\infty} v_i=u+v\in \overline{\Spc G}_{\rm pre}=\Spc G$, which shows that $\Spc U+\Spc V \subseteq \Spc G$. Hence, we conclude that $\Spc G=\Spc U+\Spc V$.

Since $\Spc U_{\rm pre}\embedIso \Spc U$ and $\Proj_{\Spc U_{\rm pre}}$ is a projector, it is bounded 
$\Spc G_{\rm pre} \to \Spc U$ with $\|\Proj_{\Spc U_{\rm pre}}\|=1$. By the B.L.T. theorem, it therefore admits a unique continuous extension
$\Proj_{\Spc U}: \overline{\Spc G}_{\rm pre}=\Spc G \toC \Spc U$ with $\|\Proj_{\Spc U}\|=1$, whose explicit definition is
\begin{align*}
\Proj_{\Spc U}\{g\}\eqdef \lim_{i\to\infty}\Proj_{\Spc U_{\rm pre}}\{g_i \}
\end{align*}
where $(g_i)$ is any Cauchy sequence in $\Spc G_{\rm pre}$ that converges to
$g \in \Spc G$. The same holds true for $\Proj_{\Spc V}: \Spc G \toC \Spc V$,  which is such that
\begin{align*}
\Proj_{\Spc V}\{g\}\eqdef \lim_{i\to\infty}\Proj_{\Spc V_{\rm pre}}\{g_i\}=\lim_{i\to\infty} v_i=v.
\end{align*}
We then use these extended operators to show that the sum $\Spc U + \Spc V$ is direct. Specifically, by invoking the basic properties of $\Proj_{\Spc U_{\rm pre}}$, we get
\begin{align*}
\Proj_{\Spc U}\{g\} &= \lim_{i\to\infty}\Proj_{\Spc U_{\rm pre}}\{\Proj_{\Spc U_{\rm pre}}\{g_i\}  + \Proj_{\Spc V_{\rm pre}}\{g_i  \}\}\\
&= \lim_{i\to\infty}\left(\Proj_{\Spc U_{\rm pre}}\{u_i\} +\Proj_{\Spc U_{\rm pre}}\{v_i \}\right)= \lim_{i\to\infty}\left(u_i + 0\right)=u \in \Spc U,
\end{align*}
which is equivalent to $\Proj_{\Spc U}\{u+v\}=u$ for any $(u,v)\in\Spc U\times\Spc V$.
Correspondingly, we also obtain that $\Proj_{\Spc V}g=\lim_{i\to\infty}(0+v_i)=v=(g-u) \in \Spc V$ with $\Proj_{\Spc V}\{u\}=0$ and $\Proj_{\Spc V}\{v\}=v$,
which proves that $\Spc U\cap\Spc V=\{0\}$.

\end{proof}
The final element is the identification of the dual space which, as expected, also has a direct-sum structure (see \cite[Theorem 1.10.13]{Megginson1998}), albeit with a suitable adaptation of the composite norm.
\begin{proposition}
\label{Prop:dualDirectSum}
Let $(\Spc U,\|\cdot\|_{\Spc U})$ and $(\Spc V,\|\cdot\|_{\Spc V})$ be two complementary Banach subspaces of $\Spc W$ and $\Spc U \oplus \Spc V$ the corresponding direct-sum space equipped with the composite norm
$\|(\|u\|_{\Spc U},\|v\|_{\Spc V})\|_p$.
Then, the continuous dual of $\Spc U \oplus \Spc V$ is the the direct-sum Banach space
$\Spc U' \oplus \Spc V'$ equipped with the dual composite norm
$\|u'+v'\|_{\Spc U' \oplus\Spc V'}=\|(\|u'\|_{\Spc U'},\|v'\|_{\Spc V'})\|_q$ where $q=\frac{p}{p-1}$ is the conjugate exponent of $p\ge1$.
\end{proposition}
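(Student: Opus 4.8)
The plan is to realize every functional on $\Spc G \eqdef \Spc U \oplus \Spc V$ through its restrictions to the two summands, and to read off both the algebraic direct-sum structure of the dual and its norm from the projectors $\Proj_{\Spc U}, \Proj_{\Spc V}$ supplied by Theorem \ref{Theo:DirectSum}. Concretely, I would send each $\xi \in \Spc G'$ to the pair $(u',v') \eqdef (\xi|_{\Spc U}, \xi|_{\Spc V}) \in \Spc U' \times \Spc V'$, and conversely send a pair $(u',v')$ to the functional $g = u+v \mapsto \langle u',u\rangle + \langle v',v\rangle$; since the decomposition $g = \Proj_{\Spc U}g + \Proj_{\Spc V}g$ is unique, these two assignments are mutually inverse and linear. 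To exhibit the dual as an \emph{internal} direct sum, I would embed $\Spc U'$ and $\Spc V'$ into $\Spc G'$ by the adjoint projectors $\Proj_{\Spc U}^\ast u' = u' \circ \Proj_{\Spc U}$ and $\Proj_{\Spc V}^\ast v' = v' \circ \Proj_{\Spc V}$. The first annihilates $\Spc V$ and restricts to $u'$ on $\Spc U$, so its image is exactly the annihilator $\Spc V^\perp \eqdef \{\xi \in \Spc G': \langle \xi, v\rangle = 0 \text{ for all } v \in \Spc V\}$, and symmetrically $\Proj_{\Spc V}^\ast(\Spc V') = \Spc U^\perp$.

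The direct-sum decomposition on the dual side then follows formally. Writing $\Identity = \Proj_{\Spc U} + \Proj_{\Spc V}$ on $\Spc G$ and taking adjoints gives, for any $\xi \in \Spc G'$,
$$
\xi = \Proj_{\Spc U}^\ast(\xi|_{\Spc U}) + \Proj_{\Spc V}^\ast(\xi|_{\Spc V}) \in \Spc V^\perp + \Spc U^\perp.
$$
The sum is direct because $\Spc V^\perp \cap \Spc U^\perp = (\Spc U + \Spc V)^\perp = \Spc G^\perp = \{0\}$, so that $\Spc G' = \Spc V^\perp \oplus \Spc U^\perp$, which under the above identifications is $\Spc U' \oplus \Spc V'$ as a vector space.

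It remains to compute the dual norm, which is where the conjugate exponent $q = \frac{p}{p-1}$ enters. For $\xi \leftrightarrow (u',v')$ and any $g = u+v$, the pointwise bound $|\langle \xi, g\rangle| \le \|u'\|_{\Spc U'}\|u\|_{\Spc U} + \|v'\|_{\Spc V'}\|v\|_{\Spc V}$ followed by H\"older's inequality on $\R^2$ gives
$$
|\langle \xi, g\rangle| \le \big\|(\|u'\|_{\Spc U'}, \|v'\|_{\Spc V'})\big\|_q \; \big\|(\|u\|_{\Spc U}, \|v\|_{\Spc V})\big\|_p,
$$
so that $\|\xi\|_{\Spc G'} \le \|(\|u'\|_{\Spc U'}, \|v'\|_{\Spc V'})\|_q$. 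For the reverse inequality I would, given $\epsilon > 0$, pick near-norming vectors $\hat u \in \Spc U$ and $\hat v \in \Spc V$ with $\|\hat u\|_{\Spc U} \le 1$, $\|\hat v\|_{\Spc V} \le 1$, $\langle u', \hat u\rangle \ge \|u'\|_{\Spc U'} - \epsilon$ and $\langle v', \hat v\rangle \ge \|v'\|_{\Spc V'} - \epsilon$, then test $\xi$ against $g = s\hat u + t\hat v$ with nonnegative scalars $s,t$ chosen to saturate the equality condition of H\"older's inequality (that is, $s^p$ and $t^p$ proportional to $\|u'\|_{\Spc U'}^q$ and $\|v'\|_{\Spc V'}^q$, normalized so that $\|(s,t)\|_p = 1$); letting $\epsilon \to 0$ drives the quotient to $\|(\|u'\|_{\Spc U'}, \|v'\|_{\Spc V'})\|_q$. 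This makes $\xi \mapsto (u',v')$ an isometric isomorphism onto $\Spc U' \oplus \Spc V'$ equipped with the $\|\cdot\|_q$ composite norm.

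I expect the only genuine work to be the sharpness argument in the last paragraph: achieving equality in the two-term H\"older inequality forces a two-parameter rescaling of the near-norming vectors to match the extremal profile, and the boundary exponents $p \in \{1, \infty\}$ (with $q \in \{\infty, 1\}$, the case $(p,q)=(\infty,1)$ being the one invoked for the native space in Theorem \ref{Theo:NativeSpace2}) must be treated by the usual convention, replacing the power profile by a single-coordinate maximizer. The bijection and the annihilator decomposition are otherwise purely formal consequences of the projectors from Theorem \ref{Theo:DirectSum}.
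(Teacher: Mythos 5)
Your argument is correct. Note that the paper does not actually prove Proposition \ref{Prop:dualDirectSum}; it only cites \cite[Theorem 1.10.13]{Megginson1998}, and your proposal supplies precisely the standard argument behind that reference: identification of the dual with $\Spc V^\perp \oplus \Spc U^\perp$ via the adjoints of the projectors from Theorem \ref{Theo:DirectSum}, the H\"older upper bound, and the extremal-profile test elements (with the usual separate treatment of $p\in\{1,\infty\}$) for sharpness. Nothing is missing; the only point worth making explicit is that the inclusions $\Spc U\hookrightarrow \Spc U\oplus\Spc V$ and $\Spc V\hookrightarrow \Spc U\oplus\Spc V$ are isometric for the composite $\ell_p$-norm, which is what guarantees that the restrictions $\xi|_{\Spc U}$ and $\xi|_{\Spc V}$ are bounded with norms at most $\|\xi\|_{(\Spc U\oplus\Spc V)'}$.
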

\subsection*{Acknowlegdments}
The research was partially supported by the Swiss National Science
Foundation under Grant 200020-162343.
The authors are thankful to John-Paul Ward and Shayan Aziznejad for helpful discussions.


\bibliographystyle{abbrv}
%
%

\bibliography{/GoogleDrive/Bibliography/BibTex_files/Unser}
\end{document}